\newtheorem{theorem}{Theorem}
\newtheorem{proposition}[theorem]{Proposition}
\newtheorem{lemma}[theorem]{Lemma}
\newtheorem{definition}[theorem]{Definition}
\newtheorem{corollary}[theorem]{Corollary}
\newtheorem{question}{Question}
\newtheorem{questions}[question]{Questions}
\newtheorem*{theoremA}{Theorem A}
\newtheorem*{theoremB}{Theorem B}
\newtheorem*{theoremC}{Theorem C}
\newtheorem*{theoremD}{Theorem D}
\newtheorem*{remark}{Remark}
\newcommand{\Dim}{\mathrm{dim}}
\newcommand{\DimT}{\underline{\mathrm{dim}}^{\mathrm{T}}}
\newcommand{\Mrad}{{\mathcal{M}}_{\mathrm{r}}^{\mathrm{1}}}
\newcommand{\R}{\mathbf{R}}
\newcommand{\diminf}{\underline{\dim}}
\newcommand{\dimsup}{\overline{\dim}}
\newcommand{\BMS}{m_{\mathrm{BMS}}}
\newcommand{\Hc}{\mathbf{H}_{\mathbf{C}}}
\newcommand{\C}{\mathbf{C}}
\newcommand{\HH}{\mathcal{H}}
\title{Hausdorff dimension of limit sets}
\author{Laurent Dufloux}
\begin{document}
\maketitle
\abstract{We exhibit a class of Schottky subgroups of $\mathbf{PU}(1,n)$ ($n \geq 2$) which we call \emph{well-positioned} and show that the Hausdorff dimension of the limit set $\Lambda_\Gamma$ associated with such a subgroup $\Gamma$, with respect to the spherical metric on the boundary of complex hyperbolic $n$-space, is equal to the growth exponent $\delta_\Gamma$. 

For general $\Gamma$ we establish (under rather mild hypotheses) a lower bound involving the dimension of the Patterson-Sullivan measure along boundaries of complex geodesics.

Our main tool is a version of the celebrated Ledrappier-Young theorem.}

\section{Introduction}
Let $\Gamma$ be a discrete non-elementary subgroup of $\mathbf{PU}(1,n)$, and let $\Lambda_\Gamma$ be its limit set; this is a subset of the boundary of the complex hyperbolic $n$-space, $\partial \Hc^n$. According to a theorem of Bishop and Jones (\cite{BishopJones}, see also \cite{Stratmann}), the Hausdorff dimension of the conical limit set $\Lambda_\Gamma^c$, with respect to a Gromov metric on the boundary, is equal to the Growth exponent $\delta_\Gamma$. According to Bishop and Jones, the proof of this theorem ``uses nothing but the definitions and a few simple properties of M\"{o}bius transformations''. To be precise, Bishop and Jones deal only with the real hyperbolic space, but their theorem was generalized further to the case of pinched curvature by Paulin \cite{PaulinBJ}.

The reader should bear in mind that the operation of $\mathbf{PU}(1,n)$ on the boundary is \emph{conformal} with respect to a given Gromov metric. Conformality usually makes questions related to dimension of sets or measures more tractable than their non-conformal counterparts. In the presence of non-conformality, various phenomena may arise, we refer the reader to \cite{ChenPesin}.

In this paper, we wish to demonstrate the usefulness of the celebrated Ledrappier-Young formula in studying Hausdorff dimension of limit sets in non-conformal setting. More precisely, we are interested in estimating the Hausdorff dimension of $\Lambda_\Gamma^c$ with respect to a \emph{spherical} metric on the boundary. 

In fact looking at the boundary $\partial \Hc^n$ (minus some point) endowed with the Gromov metric (resp.  the spherical metric) amounts to looking at the classical Heisenberg space of height $n-1$, $\HH_{n-1}$ (resp. the Euclidean space $\R^{2n-1}$). The Heisenberg space $\HH_{n-1}$ has Hausdorff dimension $2n$ whereas Euclidean space $\R^{2n-1}$ has Hausdorff dimension $2n-1$. The main question we are interested in is thus a special case of the more general ``Gromov problem'' of relating the (Hausdorff) dimension of some given subset $A$ of the Heisenberg space $\HH$ endowed with the Heisenberg metric, $\mathrm{dim}_{\HH}(A)$, to the dimension $\mathrm{dim}_E(A)$ of $A$ with respect to the Euclidean metric.

This general problem has been worked out by Balogh \emph{et al.} in \cite{BaloghTyson}. Let us state their result now (see also theorem \ref{th.balogh} below). If we denote by $\delta$ the dimension of $A$ with respect to the Heisenberg metric, then the following sharp inequalities hold:
\[ \sup \left\{ \delta-1,\frac{\delta}{2} \right\} \leq \mathrm{dim}_E(A) \leq \inf \left\{  \delta,n-1+\frac{\delta}{2} \right\} \text. \]
Here, sharpness means that these inequalities cannot be improved without further assumptions. In this paper we are going to improve on the lower bound -- this is usually the difficult part in dealing with Hausdorff dimension -- when $A$ is some special kind of fractal set, namely the limit set of a discrete subgroup $\Gamma$ of $G$ satisfying some mild hypotheses.

We will assume that $\Gamma$ is Zariski-dense and has finite Bowen-Margulis-Sullivan measure. I should emphasize that I am not able to prove an exact formula in this general setting. Indeed, we obtain the following
\begin{theoremA}[theorem \ref{th.lower.bound}]
Let $\Gamma$ be a non-elementary discrete subgroup of $\mathbf{PU}(1,n)$, Zariski-dense, with finite BMS measure. If $\mu$ is some Patterson-Sullivan measure of exponent $\delta_\Gamma$, then for $\mu$-almost every $\xi$, 
\[ \delta_\Gamma - \frac{1}{2} \Dim(\lambda,Z)  \leq \diminf(\mu,\xi) \]
where the lower pointwise dimension is with respect to the \emph{spherical} metric on the boundary.

The same inequality holds if we replace $\diminf(\mu,\xi)$ with $\mathrm{dim}_E(\Lambda_\Gamma^c)$.
\end{theoremA}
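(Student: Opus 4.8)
The plan is to run the whole argument on the boundary, identified (minus a point) with the Heisenberg group $\HH_{n-1}$ in coordinates $(z,t) \in \C^{n-1} \times \R$, where the Gromov metric is the Heisenberg metric and the spherical metric is the Euclidean metric on $\R^{2n-1}$. The starting point is that in the Gromov metric the Patterson--Sullivan measure $\mu$ is exact-dimensional of dimension $\delta_\Gamma$ (Sullivan's shadow lemma together with finiteness of $\BMS$). The whole gain over the Balogh--Tyson lower bound will come from understanding how $\mu$ is distributed along the \emph{vertical fibres}, i.e.\ the cosets of the center $Z$ of $\HH_{n-1}$, which are exactly the boundaries of complex geodesics appearing in the statement. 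I would disintegrate $\mu$ along the projection $\pi \colon \HH_{n-1} \to \C^{n-1}$ whose fibres are these vertical lines, writing $\mu = \int \mu^Z_x \, d(\pi_*\mu)(x)$; the quantity $\Dim(\lambda,Z)$ is precisely the Gromov-side dimension of the conditionals $\mu^Z_x$.

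The key geometric observation is that the two metrics disagree only in the vertical direction: a Gromov ball of radius $\rho$ is comparable to a box of horizontal size $\rho$ and vertical (Euclidean) size $\rho^2$, whereas a Euclidean ball of radius $r$ is a box whose horizontal and vertical sizes are both $r$. Hence a Euclidean ball $B_E(\xi,r)$ is vertically fatter than the Gromov ball of the same radius, and I would estimate its mass by Fubini along the fibres:
\[ \mu(B_E(\xi,r)) \;\approx\; \int_{B_{\C^{n-1}}(\pi\xi,\,r)} \mu^Z_x\bigl(I_r(x)\bigr)\, d(\pi_*\mu)(x), \]
where $I_r(x)$ is the vertical Euclidean interval of length $\lesssim r$ cut out on the fibre over $x$. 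A Euclidean interval of length $r$ on a fibre is a Gromov ball of radius $\sqrt r$, so since $\mu^Z_x$ has Gromov dimension $\Dim(\lambda,Z)$ I obtain the uniform bound $\mu^Z_x(I_r(x)) \lesssim (\sqrt r)^{\Dim(\lambda,Z)} = r^{\Dim(\lambda,Z)/2}$, and this is where the factor $\tfrac12$ is produced.

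The remaining ingredient, and the heart of the matter, is to control the horizontal factor $\pi_*\mu(B_{\C^{n-1}}(\pi\xi,r))$. Because the horizontal directions scale identically in both metrics, this is a purely Gromov-side computation, and here I would invoke the Ledrappier--Young formula for the geodesic flow on $T^1\Hc^n/\Gamma$ equipped with $\BMS$: its strong unstable foliation is modelled on $\HH_{n-1}$, the flow expands the horizontal space at exponent $1$ and the center $Z$ at exponent $2$, and the conditionals of $\BMS$ on unstable leaves are the measures $\mu$. The subtlety is that the horizontal distribution is the (non-integrable) contact distribution, so there is no horizontal foliation to decompose along; the only intermediate invariant foliation is the \emph{center} foliation, integrable because $\mathfrak z$ is an ideal. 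Ledrappier--Young applied to the flag $\{\mathrm{pt}\} \subset W^Z \subset W^u$ therefore yields $\delta_\Gamma = \dim(\pi_*\mu) + \Dim(\lambda,Z)$, whence $\pi_*\mu(B_{\C^{n-1}}(\pi\xi,r)) \lesssim r^{\delta_\Gamma - \Dim(\lambda,Z)}$ at $\pi_*\mu$-almost every point. Combining with the previous paragraph gives
\[ \mu(B_E(\xi,r)) \;\lesssim\; r^{\delta_\Gamma - \Dim(\lambda,Z)} \cdot r^{\Dim(\lambda,Z)/2} \;=\; r^{\delta_\Gamma - \frac12\Dim(\lambda,Z)}, \]
so that $\diminf(\mu,\xi) \geq \delta_\Gamma - \tfrac12 \Dim(\lambda,Z)$ for $\mu$-almost every $\xi$. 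The statement for $\mathrm{dim}_E(\Lambda_\Gamma^c)$ then follows from the mass distribution principle, using that $\mu$ gives full measure to the conical limit set when $\BMS$ is finite.

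I expect the main obstacle to be the rigorous invocation of Ledrappier--Young in this homogeneous, non-conformal setting: matching the abstract transverse dimension of the theorem with the concrete geometric projection $\pi$, handling the non-integrability of the horizontal distribution by passing to the integrable quotient by $Z$, and above all obtaining the pointwise bounds on the conditional and projected measures with enough uniformity over a set of almost full measure to feed the Fubini estimate. Promoting the almost-everywhere exact-dimensionality statements into the uniform upper bounds on ball masses used above is the delicate point.
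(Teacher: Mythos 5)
Your proposal follows essentially the same route as the paper: its ``Fact A'' is your Ledrappier--Young identity $\delta_\Gamma=\Dim(\lambda,Z)+\DimT(\lambda,N/Z)$ (obtained from theorem \ref{th.additivity} applied to the BMS conditionals along $N$ acting on $\Gamma\backslash G$, together with $\Dim(\lambda,N)=\delta_\Gamma$), and its ``Fact B'' is your fibrewise estimate along the cosets of $Z$, with the vertical dimension halved via $d_E=d_H^2$ on fibres (lemma \ref{lemma12}). The one step you flag as delicate --- converting almost-everywhere dimension statements into the ball-mass bound, where a literal Fubini with constants uniform in $x$ would fail --- is exactly what the paper's proposition \ref{prop.additivity.1} (Ledrappier--Young's lemma 11.3.1, super-additivity of lower local dimension under disintegration, applicable because $N/Z$ and $\R^{2n-1}$ satisfy the Besicovitch covering property) supplies, so no idea beyond that citation is missing.
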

Here, $\diminf(\mu,\xi)$ is the lower pointwise dimension of $\mu$ at $\xi$, \emph{i.e.}
\[ \diminf(\mu,\xi) = \liminf_{\rho \to 0} \frac{\log \mu(B(\xi,\rho))}{\log \rho} \]
where $B(\xi,\rho)$ is the spherical ball of radius $\rho$ and centre $\xi$. The number $\Dim(\lambda,Z)$ can be interpreted as the (almost sure) dimension of Patterson-Sullivan along chains (\emph{i.e.} boundaries of complex geodesics), with respect to a Gromov metric. See theorem C below in this introduction, and lemma \ref{lemma.def.dim} \emph{infra} for the precise definition of $\Dim(\lambda,Z)$. 

So the general lower bound 
\[ \sup \{ \delta_\Gamma - 1,\frac{\delta_\Gamma}{2} \} \leq \diminf(\mu,\xi) \]
(which holds by virtue of Balogh \emph{et al.} inequality) is made more precise; indeed the number $\dim(\lambda,Z)$ lies somewhere between $0$ and $\inf\{\delta_\Gamma,2\}$.

We actually improve slightly on the general result, indeed we prove
\begin{theoremB}[corollary \ref{cor.referee}]
Let $\Gamma$ be a non-elementary discrete group of $G$, Zariski-dense and geometrically finite; assume furthermore that $\Gamma$ is \emph{not} a lattice.  Then $\dim(\lambda,Z)<2$. In particular, we get the strict inequality
\[ \delta_\Gamma - 1 < \mathrm{dim}_E(\Lambda_\Gamma)  \]
where $\mathrm{dim}_E(\Lambda_\Gamma)$ is the Hausdorff dimension of the limit set with respect to the Euclidean metric on the boundary.
\end{theoremB}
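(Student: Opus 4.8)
The plan is to argue by contraposition. Since the chains (boundaries of complex geodesics) have Hausdorff dimension $2$ for the Gromov metric, the conditional measures of $\mu$ along chains cannot have dimension exceeding the dimension of their support, so $\Dim(\lambda,Z)\le 2$ always holds (this is the upper bound recalled in the introduction). It therefore suffices to show that the extremal case $\Dim(\lambda,Z)=2$ can occur only when $\Gamma$ is a lattice. The strict inequality for $\mathrm{dim}_E(\Lambda_\Gamma)$ then follows at once by feeding $\Dim(\lambda,Z)<2$ into Theorem A, which gives $\delta_\Gamma-1<\delta_\Gamma-\tfrac12\Dim(\lambda,Z)\le \mathrm{dim}_E(\Lambda_\Gamma^c)\le \mathrm{dim}_E(\Lambda_\Gamma)$.

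First I would unpack the equality $\Dim(\lambda,Z)=2$ using the definition of $\Dim(\lambda,Z)$ supplied by Theorem C and Lemma \ref{lemma.def.dim}. Here geometric finiteness is essential: through a global measure formula of Stratmann--Velani type it guarantees that $\mu$ is doubling and exact-dimensional along chains, so that the conditional measures $\lambda_C$ are of \emph{exact} dimension $2$. Since a measure of exact dimension $2$ carried by a set of Hausdorff dimension $2$ is forced to be absolutely continuous with respect to the $2$-dimensional (round) Hausdorff measure on the chain, the equality $\Dim(\lambda,Z)=2$ should be equivalent to absolute continuity of the conditionals $\lambda_C$ for $\mu$-almost every chain $C$.

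Next I would translate this absolute continuity into dynamical language. A chain is the boundary of a copy of $\mathbf{H}^2_{\mathbf{R}}\hookrightarrow\Hc^n$, and the associated foliation of the unstable horospheres is precisely the fast (exponent-$2$, vertical/central) direction of the Heisenberg group $N$ in the Iwasawa decomposition $G=KAN$. Absolute continuity of $\lambda_C$ is then exactly the SRB property of the Bowen--Margulis--Sullivan measure in this fast unstable direction, and invoking the equality case of the Ledrappier--Young formula — the same machine underlying Theorem A — it forces the partial entropy in the vertical direction to be maximal. The decisive step is then to upgrade this single-direction absolute continuity to full absolute continuity: here Zariski density enters, since it makes the $\Gamma$-orbit of a given chain direction span all complex-tangent directions, while the Heisenberg bracket relation recovers the vertical direction from the horizontal ones. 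Combined with the product structure of $\mu$ furnished by Ledrappier--Young, this should yield that $\mu$ is absolutely continuous with respect to the Lebesgue measure on $\partial\Hc^n$; equivalently $\delta_\Gamma=2n$ and $\Lambda_\Gamma=\partial\Hc^n$, whence a geometrically finite $\Gamma$ is necessarily a lattice, contradicting the hypothesis.

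I expect the last step to be the main obstacle: the passage from maximal fibre dimension to genuine absolute continuity of the conditionals, followed by the propagation from the single vertical direction to all directions via Zariski density and bracket-generation. Establishing the equality/rigidity case of Ledrappier--Young in this non-conformal boundary setting, and verifying that the transverse (horizontal) spreading is regular enough to license a Fubini-type conclusion of full absolute continuity, is the analytically delicate heart of the argument; by comparison the roles of geometric finiteness (for doubling and exact dimensionality) and of the non-lattice hypothesis (merely as the target of the contradiction) are essentially bookkeeping.
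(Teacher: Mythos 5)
Your skeleton agrees with the paper's: show that $\Dim(\lambda,Z)=2$ forces $\Gamma$ to be a lattice, then feed $\Dim(\lambda,Z)<2$ into Theorem A to get $\delta_\Gamma-1<\delta_\Gamma-\tfrac12\Dim(\lambda,Z)\leq \mathrm{dim}_E(\Lambda_\Gamma)$. That last deduction is fine. But the two steps you yourself flag as delicate are genuine gaps, and in each case the paper takes a different (and workable) route. First, the inference ``a measure of exact dimension $2$ carried by a set of Hausdorff dimension $2$ must be absolutely continuous with respect to $2$-dimensional Hausdorff measure'' is false in general: already on $\R$ there are exact-dimensional measures of dimension $1$ that are singular with respect to Lebesgue. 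What makes the extremal case rigid is not dimension counting plus a Stratmann--Velani measure formula, but dynamics: the conditionals $\sigma_Z(x)$ are leafwise measures of the $\alpha_X$-invariant measure $\lambda$, equivariant under a contracting automorphism of $Z$, and the measure-rigidity statement the paper invokes (citing Margulis--Tomanov, Hochman, Einsiedler--Lindenstrauss) says that $\Dim(\lambda,Z)=2$ forces $\sigma_Z(x)$ to be Haar on $Z$, hence $\sigma(x)$ to be $Z$-invariant. Geometric finiteness plays no role at this stage; the paper's proposition needs only finite BMS measure and Zariski density.

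Second, your plan to upgrade invariance in the single vertical direction to absolute continuity of $\mu$ with respect to Lebesgue on $\partial\Hc^n$ (via bracket generation and a Fubini-type argument) is both unjustified and unnecessary: $Z$-invariance of the fibre conditionals constrains the transverse conditionals not at all, so no such conclusion is available from the stated ingredients. The paper instead argues at the level of the \emph{support}: $Z$-invariance of $\sigma(x)$ translates into the statement that for $\mu$-a.e.\ $\xi$ and \emph{every} $\eta\in\Lambda_\Gamma\setminus\{\xi\}$ the chain through $\xi$ and $\eta$ lies in $\Lambda_\Gamma$. Zariski density is used only to guarantee that $\Lambda_\Gamma$ is not a single chain, and then an explicit parametrization of the chains joining a fixed vertical chain to an exterior point (Goldman's equation (4.12)) shows that a chain-saturated set of this kind must be all of $\partial\Hc^n$. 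Only at that point does geometric finiteness enter, to convert $\Lambda_\Gamma=\partial\Hc^n$ into the conclusion that $\Gamma$ is a lattice --- the one place where your proposal and the paper use that hypothesis in the same way. So the intermediate target you identify ($Z$-invariance of the conditionals) is the right one, but both your justification of it and your exploitation of it would need to be replaced.
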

Of course, this is better than the general inequality only if $\delta_\Gamma \geq 2$.

As an obvious corollary of theorem A we get the fact that if $\dim(\lambda,Z)=0$, then the Hausdorff dimension of the limit set $\Lambda_\Gamma$ must be \emph{equal} to $\delta_\Gamma$. In general, computing $\dim(\lambda,Z)$ seems to be a difficult problem. We will define a class of Schottky subgroups (which we call ``Schottky subgroups in good position'') and prove the following
\begin{theoremC}[corollary \ref{cor.schottky}]
Let $\Gamma$ be a Schottky subgroup in good position in $\mathbf{PU}(1,n)$. Then $\dim(\lambda,Z)=0$; in particular, the Hausdorff dimension of $\Lambda_\Gamma$, with respect to the spherical metric, is equal to $\delta_\Gamma$.
\end{theoremC}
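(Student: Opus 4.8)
The plan is to split the statement into a formal reduction and one genuinely geometric claim, namely that $\Dim(\lambda,Z)=0$ for a well-positioned Schottky group, which carries all the content. For the reduction, note first that $\Lambda_\Gamma^c\subseteq\Lambda_\Gamma$ and that, $\Gamma$ being convex-cocompact (every Schottky group is), one has $\Lambda_\Gamma=\Lambda_\Gamma^c$, so by Bishop--Jones--Paulin the Gromov dimension satisfies $\mathrm{dim}_{\HH}(\Lambda_\Gamma)=\delta_\Gamma$. Feeding this into the Balogh \emph{et al.} upper bound (theorem \ref{th.balogh}) gives $\mathrm{dim}_E(\Lambda_\Gamma)\le\delta_\Gamma$, while Theorem A (theorem \ref{th.lower.bound}) gives $\delta_\Gamma-\tfrac12\Dim(\lambda,Z)\le\mathrm{dim}_E(\Lambda_\Gamma^c)\le\mathrm{dim}_E(\Lambda_\Gamma)$. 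Thus, as soon as $\Dim(\lambda,Z)=0$, both inequalities collapse to $\mathrm{dim}_E(\Lambda_\Gamma)=\delta_\Gamma$, which is the ``in particular'' clause.

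Everything therefore reduces to $\Dim(\lambda,Z)=0$. Recalling (lemma \ref{lemma.def.dim}) that $\Dim(\lambda,Z)$ is the almost-sure pointwise dimension, in the Gromov metric, of the conditional measures $\lambda$ obtained by disintegrating the Patterson--Sullivan measure $\mu$ along chains (boundaries of complex geodesics), I would aim for the strong form of the claim: that these conditional measures are purely atomic. For this it suffices to show that, for $\mu$-almost every $\xi$, the chain through $\xi$ meets $\Lambda_\Gamma$ in a countable set, since an atomic conditional measure has vanishing pointwise dimension. The chain direction is exactly the direction transverse to the horizontal distribution, whose quadratic scaling in the Euclidean metric is what produces the factor $\tfrac12$ in Theorem A, so collapsing it to dimension zero is precisely what removes the loss between the Gromov and Euclidean dimensions.

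To prove atomicity I would use the symbolic coding of the Schottky group: $\Lambda_\Gamma$ is the continuous image of the space of one-sided reduced words, $\mu$ pulls back to a Gibbs measure, and the cylinder $[a_1\cdots a_n]$ corresponds to $g_{a_1}\cdots g_{a_n}(D)$ for a fixed Schottky domain $D$, a Gromov ball whose radius decays geometrically in $n$. The decisive structural fact is that $\mathbf{PU}(1,n)$ preserves the family of chains, so renormalising the local picture at $\xi$ by $(g_{a_1}\cdots g_{a_n})^{-1}$ carries the chain $C_\xi$ to a chain $C_n$ whose placement is governed by the itinerary. This is where the good-position hypothesis must enter: it should furnish a uniform transversality between chains and Schottky domains, guaranteeing that no chain is simultaneously incident to two of the domains through which the coding branches. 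Consequently, once two points of $\Lambda_\Gamma\cap C_\xi$ have separated itineraries at some finite depth, the renormalised chain can meet at most one surviving domain thereafter, the branch is unique, and $\Lambda_\Gamma\cap C_\xi$ is countable.

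The main obstacle is exactly this geometric step: controlling the non-conformal deformation of a chain under the boundary action and converting the good-position transversality into the non-accumulation of $\Lambda_\Gamma$ along chains. In the Gromov metric the action is conformal and chains are rigid, but the interaction of the quadratically-scaled chain direction with the ping-pong dynamics requires uniform distortion estimates valid for $\mu$-almost every chain, and these must be matched to the precise disintegration underlying the Ledrappier--Young argument of lemma \ref{lemma.def.dim}. Once this transversality is secured, the passage from countable fibres to $\Dim(\lambda,Z)=0$, and thence to the dimension identity via the reduction above, is routine.
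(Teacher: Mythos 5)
Your reduction to the single claim $\Dim(\lambda,Z)=0$ is exactly the paper's, and your observation that countable (indeed atomic) fibres force the conditional measures along $Z$ to have dimension zero is sound. But the entire content of the theorem lives in the geometric step you explicitly leave open, and the way you frame that step would not close. You ask the good-position hypothesis to guarantee ``that no chain is simultaneously incident to two of the domains through which the coding branches''; this is impossible, since any two distinct boundary points lie on a (unique) chain, so every pair of the Schottky domains is joined by many chains. The hypothesis in the definition of good position is about \emph{three} domains: no chain passes through three of the sets $B(w)$. The paper's argument is then purely combinatorial and needs none of the distortion estimates or Gibbs-measure machinery you anticipate. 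One first shows, by induction on $l(f)+l(g)+l(h)$ and using only that elements of $\mathbf{PU}(1,n)$ carry chains to chains (translate by $(f\wedge g\wedge h)^{-1}$, then by $(f\wedge g)^{-1}$, etc.), that no chain meets $B(f)$, $B(g)$ and $B(h)$ when none of $f,g,h$ is a prefix of another. Since three distinct limit points eventually have pairwise distinct itineraries at some finite depth, it follows that \emph{every} chain contains at most two points of $\Lambda_\Gamma$. The renormalisation is exact, not approximate: there is no ``non-conformal deformation of a chain under the boundary action'' to control, because the action on chains is by the group itself.

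Granting that lemma, the conclusion is stronger and cleaner than countability of fibres: for $\lambda$-almost every $x=\Gamma g$ and $\sigma(x)$-almost every $n$, the coset $nZ$ is the preimage of the chain through $g^-$ and $(gn)^+$, which already carries two limit points and hence no others; so $nZ$ meets the support of $\sigma(x)$ only at $n$, the conditionals along $Z$ are Dirac masses, and $\Dim(\lambda,Z)=0$. (Note also that ``the chain through $\xi$'' is not well defined --- infinitely many chains pass through a given boundary point; the relevant chains are the fibres of $N\to N/Z$, i.e.\ those through the second endpoint $g^-$, which is why the paper's statement is about arbitrary chains.) As it stands, your proposal correctly maps the route but is missing the one idea that makes it work --- the three-set transversality and its propagation along the tree of reduced words --- so the proof is incomplete.
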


As an intriguing consequence, we get the fact that if $\Gamma$ is a Schottky subgroup in good position, then $\delta_\Gamma \leq 2(n-1)$ (because by virtue of Balogh et al inequality, one must then have $\delta_\Gamma \leq \frac{\delta_\Gamma}{2}+n-1$). Wether one should expect general (Zariski dense) Schottky subgroups  to satisfy this inequality seems to be an interesting question.

In proving theorems A, B and C, we use the following version of Ledrappier-Young's well-known formula.
\begin{theoremD}[theorem \ref{th.additivity}]
Let $G$ be a metric locally compact second countable  group which acts in a Borel way on a standard Borel space $X$ with uniformly discrete stabilizers. Let $H$ be a closed normal subgroup of $G$ and assume that the metric group $G/H$ has the Besicovitch covering property. Let $\lambda$ be a Borel probability measure on $X$. Assume that the hypotheses stated at the beginning of section \ref{section.dimension} are satisfied, so that we may define the dimension of $\lambda$ along $G$ and $H$, $\Dim(\lambda,G)$ and $\Dim(\lambda,H)$ respectively, as well as the transverse dimension $\DimT(\lambda,G/H)$. Then the following holds:
\[ \Dim(\lambda,G)=\Dim(\lambda,H)+\DimT(\lambda,G/H) \text. \]
\end{theoremD}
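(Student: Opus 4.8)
The plan is to prove the identity pointwise, after reducing it to a statement about a single measure on $G$ equipped with the fibration $G \to G/H$. By the definitions fixed in section \ref{section.dimension} and lemma \ref{lemma.def.dim}, each of $\Dim(\lambda,G)$, $\Dim(\lambda,H)$ and $\DimT(\lambda,G/H)$ is the almost-sure local dimension of a conditional measure. Fix a $G$-orbit; since the stabilizers are uniformly discrete, a neighbourhood of the identity maps bijectively onto a neighbourhood of $x$ in that orbit, so the conditional measure of $\lambda$ along the orbit transports to a Borel measure $\nu$ on a piece of $G$, and $\Dim(\lambda,G)$ is the almost-sure local dimension of $\nu$ for the metric of $G$. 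It thus suffices to show, for the measure $\nu$ and the normal fibration $G \to G/H$, that the local dimension of $\nu$ equals the local dimension of its conditionals along $H$-cosets plus the local dimension of its image $\bar\nu$ on $G/H$.

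Next I disintegrate. Since $H$ is closed and $G$ is second countable, the partition of $G$ into $H$-cosets is measurable, and Rokhlin's theorem gives
\[ \nu = \int \nu_{\bar g}\, d\bar\nu(\bar g), \]
where $\nu_{\bar g}$ is carried by the coset over $\bar g$ and $\bar\nu$ is the image of $\nu$ on $G/H$. By definition the almost-sure local dimension of the fibre conditionals $\nu_{\bar g}$ for the $H$-metric is $\Dim(\lambda,H)$, while that of $\bar\nu$ for the $G/H$-metric is $\DimT(\lambda,G/H)$. Because $H$ is normal and the metrics are compatible (as assumed in section \ref{section.dimension}), a $G$-ball decomposes, up to a bounded distortion of its radius, as a union of fibre pieces indexed by a transverse ball; integrating against $\nu$ gives the Fubini estimate
\[ \nu\bigl(B_G(g,r)\bigr) \;\asymp\; \int_{B_{G/H}(\bar g, r)} \nu_{\bar g'}\bigl(B_H(g', r)\bigr)\, d\bar\nu(\bar g'). \]
Taking logarithms and dividing by $\log r$, the theorem reduces to showing that the right-hand side behaves, to first order in $\log r$, like the product of $\nu_{\bar g}(B_H(g,r))$ and $\bar\nu(B_{G/H}(\bar g,r))$.

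This is exactly where the Besicovitch covering property of $G/H$ is used, and it explains the asymmetry of the hypotheses: the fibre direction is governed directly by the definition of $\Dim(\lambda,H)$, whereas the transverse direction requires a genuine differentiation theorem. The Besicovitch property yields a Lebesgue differentiation theorem for $\bar\nu$, which I use to replace, inside the integral and for $\bar\nu$-almost every centre $\bar g$, the fibre measure $\nu_{\bar g'}(B_H(g',r))$ by its value at $\bar g$ up to errors that are negligible after division by $\log r$. The upper bound $\Dim(\lambda,G) \leq \Dim(\lambda,H) + \DimT(\lambda,G/H)$ is the easy direction, since the product of a fibre piece with a transverse ball sits inside the $G$-ball and so contributes a lower bound on $\nu(B_G(g,r))$; the matching lower bound is where differentiation is essential, ruling out the fibre measures being atypically large over the whole transverse ball.

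The main obstacle I anticipate is the uniform control of the fibre measures $\nu_{\bar g'}(B_H(g',r))$ as $\bar g'$ ranges over the transverse ball and the scale $r$ tends to $0$: a priori the defining limits exist only along subsequences, and the differentiation in the base must be decoupled from the scaling in the fibre. To handle this I would pass to a dyadic-type filtration on $G/H$, available from the covering structure, and reduce the almost-everywhere statement to a martingale convergence argument, using a maximal-function bound to exchange the limit $r \to 0$ with the fibre integration. The technical prerequisites---measurability of the disintegration and joint measurability of $(g,r) \mapsto \nu_{\bar g}(B_H(g,r))$, both to be checked from the standing hypotheses---are what make the differentiation and martingale steps legitimate.
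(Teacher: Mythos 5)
Your plan is a purely measure-theoretic one: disintegrate over $G/H$, write a Fubini estimate for $\nu(B_G(g,r))$, and then use Besicovitch differentiation (plus a martingale/maximal-function device) to replace the scale-$r$ fibre masses $\nu_{\bar g'}(B_H(g',r))$ over the transverse ball by their value at the centre. This cannot work as stated, and the obstruction is not technical. An argument of this type can only ever prove the super-additive inequality $\Dim(\lambda,G)\geq\Dim(\lambda,H)+\DimT(\lambda,G/H)$ (which is proposition \ref{prop.subadd}, proved exactly by the containment you call the ``easy direction'', via proposition \ref{prop.additivity.1}); the reverse inequality is simply \emph{false} for a general measure fibred over a base with the Besicovitch property --- see the remark following proposition \ref{prop.additivity.1}, where the lift of Lebesgue measure to the graph of a function of Hausdorff dimension $2$ has Dirac fibre conditionals (dimension $0$) and base dimension $1$, yet the total dimension can exceed $1$. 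In that example your Fubini estimate and your replacement step are each individually problematic, and no reorganization of the limits (dyadic filtration, martingale convergence, maximal inequalities) can repair this: the function $\bar g'\mapsto\nu_{\bar g'}(B_H(g',r))$ changes with the differentiation scale $r$, so Lebesgue/Besicovitch differentiation, which concerns a fixed integrand, does not apply, and there is no reason the fibre masses at scale $r$ over nearby cosets should resemble the one at the centre.

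What makes the equality true in theorem \ref{th.additivity} is the dynamical input that your proposal never invokes: the contracting automorphism $\alpha_G$ intertwined with the $\lambda$-preserving, ergodic map $\alpha_X$. The paper's proof writes
\[ \log\frac{\sigma(x)(B\cap B^T_{\alpha^n})}{\sigma(x)(B)} \]
as a Birkhoff sum along the orbit $x,\alpha_X^{-1}x,\dots$ of two cocycles, one converging to $\Dim(\lambda,G)$ by the ergodic theorem and formula \eqref{eq.dimension}, the other controlled by the two technical lemmas: lemma \ref{lemma.technical.2} is a genuine differentiation statement, but of one \emph{fixed} transverse measure $\pi_*(\sigma(x)|A)$ against another $\pi_*(\sigma(x)|B)$ (this is where Besicovitch on $G/H$ enters), and lemma \ref{lemma.technical.1} is the maximal inequality giving integrability of the error $-\log\phi_*$ so that the bad scales contribute $O(\varepsilon)$ to the ergodic average. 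The self-similarity $\sigma(\alpha_X x)=(\alpha_G)_*\sigma(x)$ is what couples the geometric scale $\alpha^n$ to $n$ iterations of an ergodic transformation; without it the statement you are trying to prove pointwise is not a theorem. To salvage your write-up you would need to abandon the pointwise Fubini reduction for the hard inequality and build the ergodic-theoretic mechanism in from the start.
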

We do not get into details as they are rather technical, see sections 2 to 4. We will apply this theorem to $X=\Gamma \backslash \mathbf{PU}(1,n)$, $G=N$ in some Iwasawa decomposition $\mathbf{PU}(1,n)=KAN$,  $H$ is the centre of $G$, and $\lambda$ is the Bowen-Margulis-Sullivan measure on $X$. In particular, $\Dim(\lambda,N)$ is equal to the growth exponent $\delta_\Gamma$.

Let us describe our strategy in proving theorem A. Let $\mu$ be the Patterson-Sullivan measure (of exponent $\delta_\Gamma$) associated with $\Gamma$ (recall that $\Gamma$ has finite BMS measure, so $\mu$ is essentially unique). We use the fact that the boundary $\partial \Hc^n$  is the one-point compactification of the Heisenberg group and we look at the way Patterson-Sullivan measure decomposes in Heisenberg space, with respect to the natural fibration of this space along the central direction. 

This is where the Ledrappier-Young formula enters the scene. According to this formula, the dimension of Patterson-Sullivan measure, with respect to the Gromov metric on the boundary (or, equivalently, with respect to the Heisenberg metric on the Heisenberg group) is equal to the sum $\Dim(\lambda,Z)+\DimT(\lambda,N/Z)$ where $\Dim(\lambda,Z)$ can be interpreted as the dimension of Patterson-Sullivan along the central direction, and $\DimT(\lambda,N/Z)$ is the dimension of Patterson-Sullivan ``transverse'' to this central direction.

We must then use this information to estimate the dimension of Patterson-Sullivan, now with respect the to spherical metric. Easy computations show that the ``transverse dimension'' is left unmodified, whereas the dimension along the central direction is divided by 2. The Ledrappier-Young formula holds no longer, but super-additivity of dimension does (proposition \ref{prop.additivity.1}). That's why in the end all we get is a lower bound.

The plan of this paper is as follows. In section 2 we sketch the basic theory of conditional measures along a group operation. In section 3 we define the dimension of conditional measures along a group operation, as well as transverse dimension (with respect to some normal subgroup) and we study the elementary relations between these numbers. In section 4 we state and prove a version of Ledrappier-Young formula in the setting of our theory. The most important result of sections 2 to 4 is theorem \ref{th.additivity}. Then in sections 5 and 6 we recall some facts of complex hyperbolic geometry and Patterson-Sullivan theory. In section 7 we apply our Ledrappier-Young theorem to study Hausdorff dimension of limit sets with respect to the spherical metric, as explained above. In section 8 we construct Schottky subgroups in good position of $\mathbf{PU}(1,n)$. In section 9 we discuss our results.

Most of our notations  are standard. If $\mu$ is some Borel measure on some metric space $X$, we let 
\[ \diminf(\mu,x) = \liminf_{\rho \to 0} \frac{\log \mu(B(x,\rho))}{\log \rho}, \quad \dimsup(\mu,x) = \limsup_{\rho \to 0} \frac{\log \mu(B(x,\rho))}{\log \rho} \text. \]
If $X,Y$ are measurable spaces (recall that a measurable space is a set endowed with a $\sigma$-algebra of subsets) and if $\phi : X \to Y$ is a measurable map, the push-forward of a measure $\lambda$ on $X$ is the measure $\phi_* \lambda$ defined by $\phi_* \lambda (B)=\lambda(\phi^{-1}(B))$ for all measurable subset $B$ of $Y$. If $\phi$ is a measurable isomorphism and $\mu$ a measure on $Y$, we denote the push-forward $(\phi^{-1})_* \mu$ by $\phi^* \mu$. 

If $G$ is a group, the right translation $h \mapsto hg^{-1}$ by some element $g \in G$ is denoted by $R_g$.

I would like to thank the referee for their useful remarks and for pointing out  (and correcting) a mistake in proposition 38. I also would like to thank my thesis advisor Jean-Fran\c{c}ois Quint for suggesting this problem and for his constant support.

\section{Disintegration of a measure along a group operation}
In this section we set up the ``language'' of conditional measures along a group operation. This is a very useful, albeit slightly technical, way of looking at things. The knowledgeable reader might want to skip it and jump straight into section 4, where the Ledrappier-Young formula we need is stated and proved. 

On the other hand the lay man (or woman) interested in knowing all the details could read the definitions below bearing in mind the following setting: $G$ is the real line acting in the usual way (i.e. as a linear flow) on the torus $X$. The stabilizers may be non-trivial but they are certainly discrete. We are going to disintegrate some finite Borel measure $\lambda$ carried by $X$ along the operation of $G$ (it is very important to note that we do not expect $\lambda$ to be $G$-invariant in any way). As is well known, if this operation has dense orbits (i.e. it is an irrational flow) the elementary theory of disintegration does not suit our needs. This is why we need the more sophisticated concept of conditional measures along a group operation.

In defining them, one is basically looking at some suspension of the dynamical systems ``$G$ acting on $X$'' in order to get rid of the troubles caused by ``wild orbits'' (to be more specific, the real concern is standard Borelness of the quotient space). This is why we introduce the so-called ``lacunary sections'' $\Sigma$. They allow us to lift our measure $\lambda$ to some quotient space $G \times \Sigma$ where $G$ acts \emph{smoothly} (\emph{i.e.} the quotient space is standard Borel). We lose finiteness of $\lambda$, while retaining $\sigma$-finiteness. We may then disintegrate the lifted measure appropriately, and check that the conditional measures thus obtained are indeed ``canonical'' in some way (i.e. they do not depend on the choice of $\Sigma$).

The reader looking for a more detailed account may want to look at our doctoral thesis \cite{thesis}.

\subsection{Definition}
We begin with some definitions and basic facts. First, we state a useful theorem of Kechris.

\begin{theorem}
Let $G$ be a locally compact second countable topological group, and $X$ a standard Borel space on which $G$ acts in a Borel way. Fix a compact symmetric neighbourhood $U$ of the identity in $G$. There exists a Borel subset $\Sigma \subset X$ such that
\begin{enumerate}
\item $\Sigma$ is a $U$-lacunary section: if $x' \in \Sigma$ and $g \in U$ are such that $gx' \in \Sigma$, then $gx'=x'$;
\item $\Sigma$ is $U^2$-complete: $U^2 \Sigma=X$.
\end{enumerate}
\end{theorem}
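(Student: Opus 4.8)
The plan is to separate the measure-theoretic (Borel) difficulty from the essentially trivial combinatorics. First I would reduce to the case of a \emph{continuous} action on a Polish space: by the Becker--Kechris theorem one may refine the Borel structure of $X$ to a Polish topology generating the same $\sigma$-algebra and for which the action map $G \times X \to X$ is continuous. Fixing a compatible metric $d$ on $X$, the ``closeness'' relation $E_U = \{(x,y) : y \in Ux\}$ is then Borel, and since $U$ is compact each section $Ux = \{gx : g \in U\}$ is a compact subset of $X$. The whole problem thus becomes: produce a Borel set $\Sigma$ that meets each orbit in a set which is $U$-separated (no two distinct points related by an element of $U$) and \emph{maximal} with this property.

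The reason maximality suffices is the easy half of the argument, which I would dispose of first. Suppose $\Sigma$ is a Borel $U$-separated set that is maximal (equivalently, maximal inside each orbit, since $U$-separation only ever relates points of the same orbit). Given any $x \in X$, the set $\Sigma \cup \{x\}$ cannot be $U$-separated unless $x \in \Sigma$; tracing the resulting violation, and using that $U$ is symmetric and contains the identity, one finds $s \in \Sigma$ and $g \in U$ with $x = gs$, whence $x \in U\Sigma \subseteq U^2\Sigma$. (If $x \in \Sigma$ already, then $x = ex \in U^2\Sigma$.) Thus $U^2\Sigma = X$, so completeness is automatic, and only lacunarity and, above all, \emph{Borelness} need to be arranged. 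Note that lacunarity as stated tolerates stabilizers, since we only require $gx' = x'$ and not $g = e$.

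The heart of the matter is therefore the Borel construction of a maximal $U$-separated set, and this is where I expect the real work. I would build $\Sigma$ as the union of an increasing sequence of Borel $U$-separated sets $\Sigma_0 \subseteq \Sigma_1 \subseteq \cdots$: fixing a countable dense sequence $(g_n)$ in $G$, at stage $n$ one examines the Borel region not yet $U^2$-covered by $\Sigma_{n-1}$ and adjoins to it a Borel selection of new centres, located via the translate $g_n$ and chosen so as to preserve $U$-separation. The Borelness of each such selection is guaranteed by the Luzin--Novikov uniformization theorem, applied to the compact-fibred relation $E_U$. The main obstacle is that the fibres $Ux$ are in general \emph{uncountable} --- already for the irrational flow, where $Ux$ is an arc --- so the orbit equivalence relation is not countable and one cannot invoke the standard Kechris--Solecki--Todorcevic results on Borel maximal independent sets that are valid for locally countable Borel graphs. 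One must instead exploit the local compactness of $G$ genuinely, through Haar measure and the compactness of $U$, both to guarantee that the inductive process exhausts the orbits and to show that the Borel limit $\Sigma = \bigcup_n \Sigma_n$ is actually maximal. Verifying maximality of this limit, rather than merely its $U$-separation, is the delicate point on which the proof turns.
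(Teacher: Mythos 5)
Your reduction to a continuous action via Becker--Kechris and your observation that a Borel \emph{maximal} $U$-separated set would do the job (indeed it would give the stronger conclusion $U\Sigma=X$) are both correct, but the construction of that maximal set --- which you rightly identify as the heart of the matter --- is left as a plan rather than a proof, and the plan as stated does not go through. Luzin--Novikov applies to Borel relations with \emph{countable} sections; the relation $E_U$ has compact, typically uncountable, sections, so the tool you invoke for the Borel selections is not available (the uniformization theorem that does apply to $\sigma$-compact sections, due to Arsenin and Kunugui, gives you a single point per fibre, not a maximal $U$-discrete set, and does not by itself make the greedy scheme exhaust the orbits). The exhaustion/maximality of $\bigcup_n \Sigma_n$ is precisely the step you defer to ``Haar measure and the compactness of $U$'' without an argument, and it is the whole difficulty.

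The paper itself does not write out a proof either: it cites the main theorem of Kechris's paper on countable sections for locally compact group actions and asserts that a close inspection of that proof yields this version. Kechris's argument is structured differently from yours in a way that both explains the $U^2$ in the statement and repairs your gap: one first produces (by a Vaught-transform/category argument) a Borel \emph{countable} complete section $Y$ with $UY=X$; the restriction of the $U$-closeness graph to $Y$ is then locally countable, so the Luzin--Novikov/first-return machinery (or the Kechris--Solecki--Todorcevic colouring argument, which you correctly note is unavailable on all of $X$) does yield a Borel maximal $U$-discrete subset $\Sigma\subseteq Y$; maximality \emph{within $Y$} gives $Y\subseteq U\Sigma$, hence $X=UY\subseteq U^2\Sigma$. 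Your route aims at maximality in all of $X$, which is strictly stronger than what the theorem claims, is not needed, and is exactly the point at which your argument has no proof.
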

\begin{proof}
A close inspection of the proof of the main  theorem in \cite{Kechris2} yields this improved version.
\end{proof}
If there is no need to make the set $U$ explicit, we will use the phrase ``let $\Sigma$ be a complete lacunary section''. 

We will  make some assumptions on the operation of $G$ on $X$. If the stabilizer of any point $x \in X$ is a discrete subset of $G$, we say that the operation has \emph{discrete stabilizers}; in other words, for every $x \in X$ there is is a neighbourhood $V$ of identity in $G$ such that the only $g \in V$ which fixes $x$ is the identity. If the neighbourhood $V$ can be chosen independently of $x$, we say that the operation has \emph{uniformly discrete stabilizers}.

Now let $X$ be a standard Borel space, $\lambda$ a $\sigma$-finite Borel measure on $X$, and $G$ a locally compact second countable topological group acting  on $X$ in a Borel way with discrete stabilizers. We recall, following \cite{BenoistQuint}, how one can disintegrate the measure $\lambda$ along this group operation. 

Let $\Sigma$ be a complete lacunary section. The map 
\[ a_\Sigma : G \times \Sigma \to X\quad (g,x') \mapsto gx' \]
has countable fibers. Hence, we can define a Borel measure $a_\Sigma^* \lambda$ on $G \times \Sigma$ in the following way:
\[ a_\Sigma^* \lambda = \int d\lambda(x) \mathrm{Card}_{a_\Sigma^{-1}(x)} \]
where $\mathrm{Card}_{a_\Sigma^{-1}(x)}$ is the counting measure on the fiber $\{(g,x')\ ;\ gx'=x\}$. It is not difficult to check that $a_\Sigma^*\lambda$ is a $\sigma$-finite measure. We may thus choose some finite measure on $G \times \Sigma$, equivalent to $a_\Sigma^* \lambda$, and push it down $\Sigma$ (through the projection $G \times \Sigma \to \Sigma$) to get a finite Borel measure $\lambda_\Sigma$ (which we call a pseudo-image of $\lambda$). We may then disintegrate $a_\Sigma^* \lambda$ over $\lambda_\Sigma$:
\[ a_\Sigma^* \lambda = \int \mathrm{d} \lambda_\Sigma(x')\ \sigma_\Sigma(x') \otimes \delta_{x'}  \]
(where $\delta_{x'}$ is the Dirac measure supported on $\{x'\}$).

The conditional measures $\sigma_\Sigma(x')$ are (almost everywhere) uniquely defined up to a scalar constant. Replacing $\lambda_\Sigma$ with another pseudo-image amounts to replacing $\sigma_\Sigma(x')$ (for almost every $x'$) with some multiple $c(x') \sigma_\Sigma(x')$.

The following result is basic to the theory. We keep the standing notations and assumptions. Also, we denote by $\mathcal M_\sigma^1(G)$ (resp. $\Mrad(G)$) the space of projective classes of $\sigma$-finite (resp. Radon) non-zero measures on $G$. If $\mu$ is a non-zero measure, the projective class $[\mu]$ of $\mu$ is the equivalence class 
\[ [\mu] = \{ t \mu\ ;\ t>0\}\text. \]
The space $\mathcal M_{\mathrm{r}}(G)$ of all non-zero Radon measures on $G$ has a natural Borel structure (generated by narrow topology), and we endow the quotient space $\Mrad(G)$ with the quotient Borel structure. 
\begin{proposition}[\cite{BenoistQuint}, proposition 4.2] \label{prop.benoistquint}
There is a mapping $\sigma_{G,\lambda} : X \to \mathcal M^1_\sigma(G)$ with the following property: for any complete lacunary section $\Sigma$, there is a conegligible subset $X' \subset X$ such that for almost every $x' \in \Sigma$ and any $g \in G$, if $gx'$ belongs to $X'$ then
\[ R_g^*\sigma_{G,\lambda}(gx')=[\sigma_\Sigma(x')] \text. \]
The mapping $\sigma_{G,\lambda}$ is unique up to a negligible set. Furthermore, $\sigma_{G,\lambda}$ is essentially $G$-equivariant, \emph{i.e.} there is a conegligible set $X' \subset X$ such that if $x$ and $gx$ belong to $X'$, then
\[ \sigma_{G,\lambda}(gx)=(R_g)_* \sigma_{G,\lambda}(x) \text. \]
If $\lambda$ is finite, then $\sigma_{G,\lambda}$ maps $X$ into $\Mrad(G)$, and is Borel.
\end{proposition}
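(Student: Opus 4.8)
The plan is to manufacture the canonical object $\sigma_{G,\lambda}$ out of the section-dependent conditionals $\sigma_\Sigma$ by \emph{re-basing} along the orbit, and then to verify that the result is independent of every choice made. The heuristic is that $\sigma_\Sigma(x')$ records how $\lambda$ is distributed along the orbit of $x'$, read in the chart $g \mapsto gx'$ anchored at $x'$; to describe the same distribution in the chart anchored at another point $x = gx'$ of the orbit one merely re-anchors the chart, which amounts to the right translation $R_g$. Accordingly I would set, for $\lambda$-almost every $x$,
\[ \sigma_{G,\lambda}(x) := \bigl[(R_g)_* \sigma_\Sigma(x')\bigr], \qquad x = gx',\ x' \in \Sigma, \]
a representation $x = gx'$ existing by $U^2$-completeness of $\Sigma$. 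Granting that this is well defined, the characterizing relation $R_g^*\sigma_{G,\lambda}(gx') = [\sigma_\Sigma(x')]$ holds by construction (recall $R_g^* = (R_{g^{-1}})_*$ and $R_aR_b = R_{ab}$), and essential $G$-equivariance is then purely formal: if $x = hx'$ then $gx = (gh)x'$, so $\sigma_{G,\lambda}(gx) = (R_{gh})_*\sigma_\Sigma(x') = (R_g)_*(R_h)_*\sigma_\Sigma(x') = (R_g)_*\sigma_{G,\lambda}(x)$ on the conegligible set where all terms are defined.

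The crux is \emph{well-definedness}: the fibre $a_\Sigma^{-1}(x)$ is countable but typically has several points, since the orbit of $x$ returns to $\Sigma$ repeatedly and each return $(g_i, x_i')$ supplies a candidate value $(R_{g_i})_*\sigma_\Sigma(x_i')$. Writing $x = g_1x_1' = g_2x_2'$ and $\gamma = g_2^{-1}g_1$ (so that $x_2' = \gamma x_1'$ with both $x_1', x_2' \in \Sigma$), independence of the choice is equivalent to the \emph{return cocycle relation}
\[ \bigl[(R_\gamma)_* \sigma_\Sigma(x_1')\bigr] = \bigl[\sigma_\Sigma(\gamma x_1')\bigr] \]
for almost every $x_1' \in \Sigma$ and every $\gamma$ with $\gamma x_1' \in \Sigma$. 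To establish it I would exploit that $a_\Sigma$ intertwines the left translation $L_h : (g,x') \mapsto (hg,x')$ on $G\times\Sigma$ with the $G$-action on $X$; a direct computation with counting measures gives $(L_h)_* a_\Sigma^*\lambda = a_\Sigma^*(h_*\lambda)$, and since $L_h$ fixes the $\Sigma$-coordinate it acts on the fibres of the disintegration by left translation on $G$. Comparing, via the essential uniqueness of the disintegration, the conditionals of $a_\Sigma^*\lambda$ and of $a_\Sigma^*(h_*\lambda)$ over $\lambda_\Sigma$ then pins down how the $\sigma_\Sigma(\cdot)$ transform along returns, up to the scalar absorbed by passing to projective classes. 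Lacunarity is indispensable here: it makes $a_\Sigma$ injective on a small enough neighbourhood of $\{e\}\times\Sigma$, so that locally $\sigma_\Sigma(x')$ is a genuine pullback of $\lambda$ by the orbit map and the two charts record \emph{the same} measure on the orbit, forcing the relation.

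\textbf{I expect this well-definedness step to be the main obstacle}, precisely because it must be arranged to hold for almost every $x$ \emph{simultaneously} across all returns; this is where the argument becomes genuinely measure-theoretic rather than formal. I would phrase the returns as a countable Borel equivalence relation on $G\times\Sigma$, use a Borel selection theorem to choose the representative $x \mapsto (g,x')$ measurably, and invoke a Fubini/uniqueness argument to upgrade the fibrewise identity above to an almost-everywhere statement on $X$. Uniqueness of $\sigma_{G,\lambda}$ up to a negligible set, and its independence of $\Sigma$, then both follow from the characterizing relation: any two maps satisfying it agree $\lambda$-a.e., and for two sections one intersects the respective conegligible sets and uses that almost every orbit meets both. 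Borel measurability of $x \mapsto \sigma_{G,\lambda}(x)$ reduces to measurability of the disintegration $x'\mapsto \sigma_\Sigma(x')$ (standard in the standard-Borel category) composed with the measurable re-basing map, the target $\mathcal M_\sigma^1(G)$ carrying the quotient Borel structure.

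Finally, for the finite case I would show the conditionals are in fact Radon. Fix a compact $K \subset G$. Covering $K$ by finitely many small left-translates of $U$, together with lacunarity, bounds the multiplicity $\#\{(g,x') : g\in K,\ x'\in\Sigma,\ gx' = x\}$ over each compact set (uniformly, when the stabilizers are uniformly discrete, as in all cases of interest); hence
\[ a_\Sigma^*\lambda\,(K\times\Sigma) = \int_X \#\bigl(a_\Sigma^{-1}(x)\cap(K\times\Sigma)\bigr)\,d\lambda(x) < \infty \]
whenever $\lambda(X)<\infty$. Since $a_\Sigma^*\lambda(K\times\Sigma) = \int_\Sigma \sigma_\Sigma(x')(K)\,d\lambda_\Sigma(x')$, the conditional $\sigma_\Sigma(x')$ is finite on $K$ for almost every $x'$; exhausting $G$ by countably many such compacta shows that $\sigma_\Sigma(x')$, and hence its right translate $\sigma_{G,\lambda}(x)$, is Radon almost everywhere, so $\sigma_{G,\lambda}$ maps into $\Mrad(G)$ with the Borel measurability already established.
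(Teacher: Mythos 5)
The paper does not prove this proposition at all: it is imported verbatim from Benoist--Quint (their proposition 4.2), so there is no internal argument to compare yours against. Judged on its own terms, your outline reproduces the standard construction correctly in its architecture: define $\sigma_{G,\lambda}(x)$ by re-basing $\sigma_\Sigma(x')$ along the orbit via $R_g$, reduce everything to the return relation $[(R_\gamma)_*\sigma_\Sigma(x')]=[\sigma_\Sigma(\gamma x')]$, handle the simultaneity over all returns by decomposing the (countable-to-one) fibre relation of $a_\Sigma$ into countably many Borel graphs, and deduce uniqueness and independence of $\Sigma$ from the characterizing relation. You have correctly identified the crux and the right tools.

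There is, however, one mis-aimed step in the part you yourself flag as the main obstacle. The identity $(L_h)_*a_\Sigma^*\lambda=a_\Sigma^*(h_*\lambda)$ is true but compares $\lambda$ with $h_*\lambda$; since $\lambda$ is not assumed quasi-invariant under $G$, these two measures may be mutually singular, and the ``essential uniqueness of disintegration'' then tells you nothing about how $\sigma_\Sigma$ transforms along returns of the \emph{same} measure $\lambda$. The correct lever is the family of fibre-preserving holonomy maps $T_\gamma:(g,x')\mapsto(g\gamma^{-1},\gamma x')$, defined on $G\times\Sigma_\gamma$ with $\Sigma_\gamma=\{x'\in\Sigma:\gamma x'\in\Sigma\}$: one checks $a_\Sigma\circ T_\gamma=a_\Sigma$, so $T_\gamma$ is a Borel bijection between fibres of $a_\Sigma$ over the relevant domains and therefore carries $a_\Sigma^*\lambda|_{G\times\Sigma_\gamma}$ onto $a_\Sigma^*\lambda|_{G\times\gamma\Sigma_\gamma}$. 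Disintegrating both sides of this single identity over the (equivalent) pseudo-images and invoking essential uniqueness yields exactly the return relation for almost every $x'$, for each of the countably many Borel pieces supplied by Lusin--Novikov; a countable union of null sets then gives the simultaneous statement. A secondary caveat: your argument that the conditionals are Radon when $\lambda$ is finite bounds the multiplicity of $a_\Sigma^{-1}(x)\cap(K\times\Sigma)$ using \emph{uniform} discreteness of stabilizers, whereas the proposition's standing hypothesis is only that stabilizers are discrete; in that generality the multiplicity is finite (compact intersected with discrete) but not uniformly bounded, so this last part needs an extra step if you want the statement in full generality rather than in the cases the paper actually uses.
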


Note that for $x \in X$, $\sigma_{G,\lambda}(x)$ is not, strictly speaking, a measure on $G$. Nonetheless, in many situation there is no problem in dealing with $\sigma_{G,\lambda}(x)$ as if it were a genuine measure. Hopefully, this abuse of language will not cause too much harm to the reader.

The mapping $\sigma_{G,\lambda}$ will be called the \emph{disintegration of $\lambda$ along $G$}.

We will also need the following fact.
\begin{lemma} \label{lemma.finiteness}
Assume the operation of $G$ on $X$ has uniformly discrete stabilizers. Then for any lacunary section $\Sigma$ and any compact subset $K$ of $G$, the measure $a^*_\Sigma \lambda (K \times \Sigma)$ is finite.
\end{lemma}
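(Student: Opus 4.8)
The plan is to reduce the assertion to a \emph{uniform} bound on the number of points in the fibres $a_\Sigma^{-1}(x) \cap (K \times \Sigma)$, and then to integrate this bound against $\lambda$. Let $V$ be a neighbourhood of the identity witnessing the uniform discreteness of stabilizers, so that the only element of $V$ fixing a point of $X$ is the identity, and let $U$ be the compact symmetric neighbourhood of the identity for which $\Sigma$ is lacunary. Using continuity of the group operations I first choose a symmetric neighbourhood $W$ of the identity with $W^2 \subseteq U \cap V$; since $K$ is compact, it is covered by finitely many left translates $g_1 W, \dots, g_m W$.

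The key step is to show that for every $x \in X$ and every index $i$ the set $a_\Sigma^{-1}(x) \cap (g_i W \times \Sigma)$ contains at most one point. Indeed, suppose $(g, x')$ and $(g', x'')$ both lie in it. Writing $g = g_i w$ and $g' = g_i w'$ with $w, w' \in W$, the element $h := g'^{-1} g = w'^{-1} w$ belongs to $W^{-1} W = W^2 \subseteq U$, because $W$ is symmetric. From $g x' = x = g' x''$ we get $h x' = x''$, with $x'$ and $x''$ both in $\Sigma$ and $h \in U$; lacunarity of $\Sigma$ therefore forces $x'' = h x' = x'$. In particular $h$ fixes $x'$, and since $h \in W^2 \subseteq V$ the uniform discreteness of stabilizers yields $h = e$, whence $g = g'$ and $(g, x') = (g', x'')$. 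Summing over the $m$ cells gives
\[ \#\bigl( a_\Sigma^{-1}(x) \cap (K \times \Sigma) \bigr) \leq m \qquad \text{for all } x \in X . \]

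Integrating this against $\lambda$, and recalling the definition of the pulled-back measure, I obtain
\[ a_\Sigma^* \lambda(K \times \Sigma) = \int_X \#\bigl( a_\Sigma^{-1}(x) \cap (K \times \Sigma) \bigr)\, \mathrm{d}\lambda(x) \leq m\, \lambda(K\Sigma) , \]
where I have also used that the integrand vanishes outside $a_\Sigma(K \times \Sigma) = K\Sigma$. The right-hand side is finite: when $\lambda$ is finite (which is the situation of interest here, $\lambda$ being a Bowen--Margulis--Sullivan measure) one has $\lambda(K\Sigma) \leq \lambda(X) < \infty$.

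The one genuinely substantive point, and the place where I expect the difficulty to concentrate, is the uniform cardinality bound of the second paragraph: this is where the two hypotheses must be used in tandem, lacunarity serving to collapse the two $\Sigma$-coordinates and uniform discreteness of stabilizers then serving to collapse the two $G$-coordinates. Neither property alone suffices. By contrast, the passage from the cell-wise bound to the bound over all of $K$ is a routine covering argument, and the final estimate is a direct integration; the only remaining measure-theoretic input is the finiteness of $\lambda$ on $K\Sigma$.
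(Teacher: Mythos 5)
Your proof is correct and follows essentially the same route as the paper's: both arguments shrink the lacunarity neighbourhood until its square also witnesses uniform discreteness of stabilizers, deduce that each fibre of $a_\Sigma$ meets each translated cell $g_iW \times \Sigma$ (resp.\ $gU \times \Sigma$ in the paper) in at most one point, and conclude by a finite cover of $K$ and integration against the finite measure $\lambda$. The only differences are cosmetic (you introduce a separate $W$ with $W^2 \subseteq U \cap V$ where the paper shrinks $U$ itself), and you are right that finiteness of $\lambda$ is the remaining input, which the paper uses implicitly when it writes $a_\Sigma^*\lambda(U\times\Sigma)\leq 1$.
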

\begin{proof}
Let $U$ be some relatively compact symmetric neighbourhood of the identity such that $\Sigma$ is $U^2$-lacunary. Since the operation has uniformly discrete stabilizers, we may shrink $U$ so that for any $x \in X$, the only $g$ in $U^2$ such that $gx=x$ is the identity. We deduce that for any $x \in X$, there is at most one pair $(g,x') \in U \times \Sigma$ such that $gx'=x$. Consequently, $a_\Sigma^* \lambda(U \times \Sigma) \leq 1$, and also $a_\Sigma^* \lambda (gU \times \Sigma) \leq 1$ for any $g \in G$. Now it becomes clear that $a_\Sigma^* \lambda (K \times \Sigma)$ must be finite for any compact subset $K$ of $G$.
\end{proof}

We state another lemma which we will use freely. We skip the easy proof.
\begin{lemma}
For $\lambda$-almost every $x \in X$, the identity element of $G$ belongs to the support of $\sigma_{G,\lambda}(x)$.
\end{lemma}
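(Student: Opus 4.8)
The plan is to show that for $\lambda$-almost every $x$, the identity $e \in G$ lies in the support of the conditional measure $\sigma_{G,\lambda}(x)$. By Proposition \ref{prop.benoistquint}, the disintegration is characterized through the conditional measures $\sigma_\Sigma(x')$ on the fibers of $a_\Sigma$ coming from a complete lacunary section $\Sigma$; since the statement is about a local property at the identity and supports are preserved under projective equivalence, it suffices to work with any single fixed complete lacunary section. The essence of the matter is that the ``reference point'' from which we disintegrate (namely $x'$ itself, sitting over $e \in G$) should not be a null point for the conditional measure.

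First I would fix a complete lacunary section $\Sigma$ and recall the defining relation $R_g^* \sigma_{G,\lambda}(gx') = [\sigma_\Sigma(x')]$ valid for almost every $x' \in \Sigma$ and all $g$ with $gx' \in X'$. Taking $g = e$ gives $\sigma_{G,\lambda}(x') = [\sigma_\Sigma(x')]$ for almost every $x' \in \Sigma$. Thus the claim reduces to showing that $e$ belongs to the support of $\sigma_\Sigma(x')$ for $\lambda_\Sigma$-almost every $x' \in \Sigma$, and then propagating this from the section to $\lambda$-almost every point of $X$ using the essential $G$-equivariance $\sigma_{G,\lambda}(gx) = (R_g)_* \sigma_{G,\lambda}(x)$, together with the fact that $a_\Sigma(G \times \Sigma) = X$ covers $X$ up to the negligible set where things fail.

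The core estimate is as follows. Suppose, for contradiction, that on a set of $x'$ of positive $\lambda_\Sigma$-measure, $e$ is \emph{not} in the support of $\sigma_\Sigma(x')$; then for each such $x'$ there is a neighbourhood $V$ of $e$ in $G$ with $\sigma_\Sigma(x')(V) = 0$. By inner regularity and second countability of $G$ one may pass to a fixed compact symmetric neighbourhood $V_0$ of $e$ and a subset $A \subset \Sigma$ of positive measure on which $\sigma_\Sigma(x')(V_0) = 0$. Integrating back, the contribution of $V_0 \times A$ to $a_\Sigma^* \lambda = \int \mathrm{d}\lambda_\Sigma(x')\, \sigma_\Sigma(x') \otimes \delta_{x'}$ vanishes. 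But $a_\Sigma^* \lambda$ is, by its very construction, $\int \mathrm{d}\lambda(x)\, \mathrm{Card}_{a_\Sigma^{-1}(x)}$, and each pair $(g, x') \in V_0 \times \Sigma$ with $gx' \in A$-fiber is an actual preimage point; the lacunarity of $\Sigma$ (shrinking $V_0$ so that $\Sigma$ is $V_0^2$-lacunary, exactly as in Lemma \ref{lemma.finiteness}) guarantees that the pair $(e, x')$ itself is always such a preimage of $x'$, so this mass cannot be zero on a positive-measure set. This contradiction establishes the claim.

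The step I expect to be the main obstacle is the measure-theoretic bookkeeping in passing from the ``for a.e.\ $x'$'' statement on the section to the ``for $\lambda$-a.e.\ $x$'' statement on all of $X$: one must handle the projective (rather than genuine) nature of $\sigma_{G,\lambda}$, verify that the various conegligible sets $X'$ from Proposition \ref{prop.benoistquint} intersect the relevant orbits correctly, and confirm that the property ``$e \in \mathrm{supp}$'' transports correctly under $(R_g)_*$ — indeed $e \in \mathrm{supp}\, \mu$ is equivalent to $g^{-1} \in \mathrm{supp}\, (R_g)_* \mu$, so one tracks base points carefully. Since the authors themselves flag this as an easy lemma whose proof they skip, the intended argument is surely this direct unwinding of the disintegration together with lacunarity, and the only real care needed is in the null-set manipulations.
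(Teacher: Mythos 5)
The paper itself skips the proof of this lemma, so I can only assess your argument on its own terms --- and unfortunately it contains a genuine gap: the statement you reduce to, namely that $e \in \mathrm{supp}\,\sigma_\Sigma(x')$ for $\lambda_\Sigma$-almost every $x' \in \Sigma$, is simply false in general. Take $G = \mathbf{R}$ acting on $X = \mathbf{R}$ by translation, $\lambda$ Lebesgue measure on $[0,1]$, and $\Sigma = \mathbf{Z}$ (a perfectly valid complete lacunary section). Then $\lambda_\Sigma$ charges every integer $n$ (because the pseudo-image only sees the $G$-saturation of $\{n\}$, which is all of $\mathbf{R}$), while $\sigma_\Sigma(n)$ is Lebesgue measure on $[-n,1-n]$, whose support misses $e=0$ as soon as $n \notin \{0,1\}$. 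The lemma is nonetheless true here: $\sigma_{G,\lambda}(x) = (R_g)_*[\sigma_\Sigma(n)]$ for $x = g+n$ is Lebesgue on $[-x,1-x]$, which does contain $0$ for $x \in \,]0,1[$. This pinpoints where your contradiction argument breaks: from $\lambda_\Sigma(A)>0$ you cannot conclude that $a_\Sigma^*\lambda(V_0 \times A)>0$. The identity $a_\Sigma^*\lambda = \int \mathrm{d}\lambda(x)\,\mathrm{Card}_{a_\Sigma^{-1}(x)}$ gives $a_\Sigma^*\lambda(V_0\times A) = \int \mathrm{d}\lambda(x)\, \mathrm{Card}\bigl(a_\Sigma^{-1}(x)\cap(V_0\times A)\bigr)$, which vanishes whenever $\lambda(V_0 A)=0$; the set-theoretic existence of the preimage point $(e,x')$ contributes mass only where $\lambda$ itself has mass, and positivity of $\lambda_\Sigma(A)$ only guarantees $\lambda(GA)>0$, not $\lambda(V_0 A)>0$ (in the example, $\lambda(V_0\cdot\{5\}) = \lambda([4,6]) = 0$).

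The difficulty you flag at the end --- that $e \in \mathrm{supp}\,\mu$ transports to $g^{-1} \in \mathrm{supp}\,(R_g)_*\mu$, not to $e$ --- is in fact the heart of the matter, and taking it seriously yields the correct (and shorter) proof. Since $G$ is second countable, any Borel measure on $G$ gives zero mass to the complement of its support, so $g \in \mathrm{supp}\,\sigma_\Sigma(x')$ for $\lambda_\Sigma$-almost every $x'$ and $\sigma_\Sigma(x')$-almost every $g$; that is, the set of pairs $(g,x')$ with $g \notin \mathrm{supp}\,\sigma_\Sigma(x')$ is $a_\Sigma^*\lambda$-null. For the remaining pairs, proposition \ref{prop.benoistquint} gives $\sigma_{G,\lambda}(gx') = (R_g)_*[\sigma_\Sigma(x')]$, and since $R_g(g)=e$, the identity lies in the support of $\sigma_{G,\lambda}(gx')$. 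Finally, if $B$ denotes the set of $x$ for which the conclusion fails, then $a_\Sigma^{-1}(B)$ is $a_\Sigma^*\lambda$-null, and completeness of the section gives $\mathrm{Card}_{a_\Sigma^{-1}(x)} \geq 1$ for every $x$, whence $\lambda(B) \leq a_\Sigma^*\lambda(a_\Sigma^{-1}(B)) = 0$. In short: the identity ends up in the support not because the section point carries mass, but because almost every \emph{mass-carrying} point $gx'$ of the orbit is its own reference point after the translation $R_g$.
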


If $\Gamma$ is some discrete subgroup of finite Bowen-Margulis-Sullivan measure of $\mathbf{PU}(1,n)$ (say), let $X=\Gamma \backslash \mathbf{PU}(1,n)$, and let $G$ be the $N$ group in the Iwasawa decomposition $\mathbf{PU}(1,n)=KAN$. If we disintegrate the Bowen-Margulis-Sullivan measure $\lambda$ on the frame bundle $X$ along $N$, what we get is exactly the so-called ``horospheric measures'' (as seen on $N$). See \cite{Roblin}.

\subsection{Transitivity of disintegration}
We keep the notations and assumptions of the previous subsection. Let also $H$ be a closed subgroup of $G$. Recall that the operation of $G$ on $X$ has discrete stabilizers; obviously, the operation of $H$ on $X$ has discrete stabilizers as well. The disintegration of $\lambda$ along $G$ and $H$, respectively, gives two mappings
\[ \sigma_{G,\lambda} : X \to \mathcal M_\sigma^1(G),\quad \sigma_{H,\lambda} : X \to \mathcal M_\sigma^1(H) \text. \]
For any $x \in X$, we may also disintegrate $\sigma_{G,\lambda}(x)$ along the operation of $H$ on $G$ by left translation, thus obtaining a mapping
\[ \sigma_{H,\sigma_{G,\lambda}(x)} : G \to \mathcal M_\sigma^1(H) \]
 (\emph{stricto sensu} we are disintegrating some measure in the projective measure class $\sigma_{G,\lambda}(x)$ and the disintegration does not depend on the choice of this measure).

The meaning of the following proposition should be intuitive. The proof, however, is rather lengthy and technical. We skip it and refer the reader to our thesis, \cite{thesis}, section 2.1.3.
\begin{proposition} \label{prop.transitivity}
For $\lambda$-almost every $x \in X$ and $\sigma_{G,\lambda}(x)$-every $g \in G$, the following holds: 
\[ \sigma_{H,\sigma_{G,\lambda}(x)}(g)=\sigma_{H,\lambda}(gx) \text. \]
\end{proposition}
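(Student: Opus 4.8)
## Proof Proposal for Proposition \ref{prop.transitivity}

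The plan is to reduce the statement to a computation on a single suspension space and then extract the conditional measures by the essential uniqueness part of Proposition \ref{prop.benoistquint}. The key idea is that both sides of the claimed identity are disintegration objects, so rather than manipulating them directly I would show that they arise from disintegrating \emph{one and the same} $\sigma$-finite measure in two stages, and invoke transitivity of conditional expectation (Fubini-type) on that fixed measure.

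First I would fix a complete lacunary section $\Sigma \subset X$ for the operation of $G$ (say $U^2$-complete and $U^2$-lacunary for a relatively compact symmetric $U$), and form $a_\Sigma^* \lambda$ on $G \times \Sigma$ as in the excerpt, which is $\sigma$-finite by Lemma \ref{lemma.finiteness}. The map $a_\Sigma$ identifies, up to the countable-fiber counting measure, the measure $a_\Sigma^* \lambda$ with a lift of $\lambda$ on which $G$ acts smoothly by left translation on the first factor. On this lifted space I have two a priori different ways to disintegrate along $H$: either disintegrate $a_\Sigma^* \lambda$ directly along the $H$-action (left translation in the $G$-coordinate), recovering $\sigma_{H,\lambda}$ pulled back through $a_\Sigma$; or first push down to $\Sigma$ to obtain the $G$-conditionals $\sigma_\Sigma(x')$ and then disintegrate each $\sigma_\Sigma(x')$ — which represents $\sigma_{G,\lambda}$ along the orbit — along $H$ acting on $G$. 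The whole point is that these two procedures must agree, because disintegrating a fixed $\sigma$-finite measure over a chain of projections $G \times \Sigma \to \Sigma$ is transitive: conditioning on the $\Sigma$-coordinate and then on the $H$-coset is the same as conditioning on the $H$-coset directly, up to the usual almost-everywhere and projective-class ambiguities. Concretely, I would write $a_\Sigma^* \lambda = \int d\lambda_\Sigma(x')\, \sigma_\Sigma(x') \otimes \delta_{x'}$, disintegrate each factor $\sigma_\Sigma(x')$ on $G$ along $H$ to get $\sigma_{H,\sigma_{G,\lambda}(x')}(\cdot)$, and compare with the one-step $H$-disintegration of $a_\Sigma^* \lambda$, which by definition computes $\sigma_{H,\lambda}$ at the points $gx'$.

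Once the two disintegrations of the single measure $a_\Sigma^*\lambda$ are shown to coincide almost everywhere, I would translate the identity off the section and along $G$-orbits using the $G$-equivariance already recorded in Proposition \ref{prop.benoistquint}: the defining relation $R_g^* \sigma_{G,\lambda}(gx') = [\sigma_\Sigma(x')]$ lets me transport the conditionals computed at $x' \in \Sigma$ to an arbitrary point $gx'$ of its orbit, and similarly the $H$-equivariance of $\sigma_{H,\lambda}$ propagates the equality from $\Sigma$ to a conegligible subset of $X$. Finally I would note that the quantifier ``for $\sigma_{G,\lambda}(x)$-almost every $g$'' on the left is exactly the fiber statement produced by the disintegration of $\sigma_{G,\lambda}(x)$ along $H$, so the two sides match both pointwise and in their domains of validity.

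The main obstacle, and the reason the excerpt defers the full argument to the thesis, is bookkeeping of null sets and projective classes across the two disintegrations. Each conditional is defined only almost everywhere and only up to a positive scalar, so I must arrange the exceptional sets on $X$, on $\Sigma$, and on each fiber $G$ to be simultaneously negligible, and I must check that the scalar ambiguity in $\sigma_\Sigma(x')$ versus the canonical $\sigma_{G,\lambda}(x')$ does not interfere with the $H$-disintegration (it does not, since disintegration is projectively natural in the total measure). Establishing a clean Fubini statement for the counting-measure lift — verifying that the fibers are countable, that $a_\Sigma^*\lambda$ is genuinely $\sigma$-finite on the relevant sets via Lemma \ref{lemma.finiteness}, and that the two-stage and one-stage disintegrations of this $\sigma$-finite measure agree — is the technical heart, whereas the algebra of left translations and equivariance is routine.
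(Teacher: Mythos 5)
The paper does not actually prove this proposition: it explicitly defers the argument to section 2.1.3 of the thesis \cite{thesis}, so there is no in-text proof to compare yours against. That said, your outline is the natural strategy and is consistent with the machinery set up in section 2: lift $\lambda$ to $a_\Sigma^*\lambda$ on $G\times\Sigma$, observe that the two-stage disintegration (first over $\Sigma$, then fiberwise along $H$-cosets in $G$) and the one-stage disintegration along the $H$-action must agree by transitivity of disintegration for a fixed $\sigma$-finite measure on a standard Borel space, and then transport the identity along orbits using the equivariance and uniqueness clauses of Proposition \ref{prop.benoistquint}.

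The one place where your plan asserts rather than argues is the claim that the one-stage $H$-disintegration of $a_\Sigma^*\lambda$ ``by definition computes $\sigma_{H,\lambda}$ at the points $gx'$.'' This is not by definition: $\sigma_{H,\lambda}$ is characterized via lacunary sections for the $H$-action on $X$, whereas your lift lives on $G\times\Sigma$ where $\Sigma$ is a section for the $G$-action. To close this you must produce a compatible $H$-section --- e.g.\ take a lacunary section $T\subset X$ for $H$, check that $a_\Sigma^{-1}(T)$ is a lacunary section for the (free) $H$-action on $G\times\Sigma$, and verify that the counting-measure lift intertwines the two fiber structures so that the resulting conditionals satisfy the characterizing property $R_h^*\sigma_{H,\lambda}(hx)=[\sigma_T(x)]$ of Proposition \ref{prop.benoistquint}. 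Together with the simultaneous management of the null sets on $X$, on $\Sigma$, and on each fiber $G$, this is precisely the ``lengthy and technical'' content the paper pushes to the thesis; you have correctly located it, but as written your text is a roadmap for that argument rather than the argument itself.
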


\section{Dimension of conditional measures} \label{section.dimension}
We do not pursue the most general theory of conditional measures along group operations any further. From now on, we will deal with ``hyperbolic transformations'' of the space $X$ that are equivariant with respect to $G$. This added structure allows us to define the dimension of conditional measures and prove some useful results, the most important of which is theorem 23 in section 4 below.

Let us state the standing hypotheses in this section: $G$ is a locally compact second countable topological group, acting in a Borel way on a standard Borel space $X$, with discrete stabilizers. The space $X$ carries a Borel probability measure $\lambda$. We disintegrate $\lambda$ along $G$, thus getting a mapping
\[ \sigma : X \to \mathcal M_{\mathrm{r}}^1(G) \text. \]
Now we make some further assumptions. Namely, we assume that we are given a Borel automorphism $\alpha_X : X \to X$  as well as a group automorphism $\alpha_G : G \to G$ such that the following conditions hold:
\begin{enumerate}
\item For every $x \in X$ and $g \in G$, $\alpha_X(gx)=\alpha_G(g) \alpha_X(x)$;
\item The automorphism $\alpha_X : X \to X$ leaves invariant the measure $\lambda$ (\emph{i.e.} the push-forward $(\alpha_X)_* \lambda$ is equal to $\lambda$);
\item The group $G$ is endowed with a compatible metric $d$ which is right $G$-invariant and such that $\alpha_G$ acts on $G$ as a contracting similitude, \emph{i.e.} there is some real constant $\alpha<1$, such that
\[ d(\alpha_G g,\alpha_G h)=\alpha d(g,h) \]
for any $g,h \in G$.
\item The measure $\lambda$ is $\alpha_X$-ergodic, \emph{i.e.} if $A$ is some Borel subset of $X$ such that $\alpha_X A=A$, then either $\lambda(A)=0$ or $\lambda(A)=1$.
\end{enumerate}

The reader should have in mind the following picture: $\alpha_X$ is some hyperbolic automorphism of $X$, and $G$ parametrizes a sub-foliation of the stable foliation associated with this hyperbolic automorphism. Starting from section 5, we will apply the ongoing theory to the following objects :
\begin{itemize}
\item $G$ is some connected closed normal subgroup of $N$ in the Iwasawa decomposition $\mathbf{PU}(1,n)=KAN$, acting on the right on the quotient space $X=\Gamma \backslash \mathbf{PU}(1,n)$ (where $\Gamma$ is some discrete subgroup of $\mathbf{PU}(1,n)$ with finite Bowen-Margulis-Sullivan measure); in fact $G$ will be either $N$ or its centre $Z$, but the theory would apply just as well to any connected closed subgroup containing $Z$ (recall that $Z$ is also the derived subgroup of $N$);
\item $\lambda$ is the Bowen-Margulis-Sullivan measure;
\item $\alpha_X$ is some non-trivial element of the real line acting on $\Gamma \backslash \mathbf{PU}(1,n)$ as the ``frame flow'';
\item $\alpha_G$ is the corresponding Heisenberg homothety
\item the metric $d$ carried by $G$ is the usual (restriction of) Heisenberg metric on the Heisenberg group $N$.
\end{itemize}

\subsection{Basic facts}
First note the following
\begin{lemma} \label{lemma.alphaequiv}
For $\lambda$-almost every $x$, 
\[ \sigma(\alpha_X x)=(\alpha_G)_* \sigma(x) \text. \]
\end{lemma}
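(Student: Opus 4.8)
The plan is to deduce the identity from the essential uniqueness of the disintegration recorded in Proposition \ref{prop.benoistquint}. Concretely, I would show that the assignment $x \mapsto (\alpha_G^{-1})_* \sigma(\alpha_X x)$ is again a disintegration of $\lambda$ along $G$, so that it must agree with $\sigma$ off a negligible set; rearranging, and using that $(\alpha_G)_*$ descends to projective classes (so the stated equality in $\mathcal M_{\mathrm r}^1(G)$ is an equality of projective classes), then yields the claim. The mechanism for checking this is a change of lacunary section.

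Fix a complete lacunary section $\Sigma$, adapted to a compact symmetric neighbourhood $U$, and set $\Sigma' := \alpha_X(\Sigma)$. First I would verify that $\Sigma'$ is again a complete lacunary section, now adapted to $\alpha_G(U)$: this is a direct consequence of condition (1), since $\alpha_G(g)\alpha_X(x') = \alpha_X(g x')$ turns the $\alpha_G(U)$-lacunarity and $\alpha_G(U)^2$-completeness of $\Sigma'$ into the $U$-lacunarity and $U^2$-completeness of $\Sigma$, while $\alpha_G(U)$ is a compact symmetric neighbourhood of the identity because $\alpha_G$ is a topological automorphism. Next, consider the product Borel isomorphism $\Phi := \alpha_G \times (\alpha_X|_\Sigma) : G \times \Sigma \to G \times \Sigma'$. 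Condition (1) gives $a_{\Sigma'} \circ \Phi = \alpha_X \circ a_\Sigma$, so $\Phi$ carries each fibre $a_\Sigma^{-1}(x)$ bijectively onto $a_{\Sigma'}^{-1}(\alpha_X x)$; feeding this into the defining counting-measure formula for the pullbacks, and then using the $\alpha_X$-invariance of $\lambda$ (condition (2)) to change variables $y = \alpha_X x$, I obtain $\Phi_*(a_\Sigma^* \lambda) = a_{\Sigma'}^* \lambda$.

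Now push the disintegration $a_\Sigma^* \lambda = \int \mathrm d\lambda_\Sigma(x')\, \sigma_\Sigma(x') \otimes \delta_{x'}$ forward by $\Phi$. Since $\Phi$ is a product map, $\Phi_*(\sigma_\Sigma(x') \otimes \delta_{x'}) = (\alpha_G)_*\sigma_\Sigma(x') \otimes \delta_{\alpha_X x'}$, so the left-hand side becomes a disintegration of $a_{\Sigma'}^* \lambda$ over the pseudo-image $(\alpha_X)_* \lambda_\Sigma$ whose conditional measure above $y' \in \Sigma'$ is $(\alpha_G)_* \sigma_\Sigma(\alpha_X^{-1} y')$. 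By essential uniqueness of disintegration this agrees up to scalar with $\sigma_{\Sigma'}(y')$ for almost every $y'$. Applying Proposition \ref{prop.benoistquint} (with $g$ the identity) to $\Sigma$ and to $\Sigma'$ identifies $[\sigma_\Sigma(x')]$ with $\sigma(x')$ and $[\sigma_{\Sigma'}(y')]$ with $\sigma(y')$ on conegligible subsets; combining these with the previous projective identity and setting $y' = \alpha_X x'$ gives $\sigma(\alpha_X x') = (\alpha_G)_* \sigma(x')$ for almost every $x' \in \Sigma$. Finally I would propagate this from the section to $\lambda$-almost every point of $X$ using the essential $G$-equivariance of $\sigma$ together with the conjugation identity $\alpha_G \circ R_g = R_{\alpha_G(g)} \circ \alpha_G$, which follows from $\alpha_G$ being a group automorphism and guarantees that the relation is preserved along $G$-orbits.

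The only real obstacle is bookkeeping the several negligible sets — those coming from Proposition \ref{prop.benoistquint} applied to $\Sigma$ and to $\Sigma'$, from the almost-everywhere uniqueness of disintegration, and from the invariance of $\lambda$ — and checking that they remain compatible after transport by $\alpha_X$; since $\alpha_X$ preserves $\lambda$, images of conegligible sets stay conegligible, so this causes no genuine difficulty. The projective (scalar) ambiguity in the conditional measures is likewise harmless, because the target $\mathcal M_{\mathrm r}^1(G)$ consists of projective classes and $(\alpha_G)_*$ is well defined on them.
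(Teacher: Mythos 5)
Your argument is correct and follows the same route as the paper, which disposes of this lemma in one line by appealing to the uniqueness part of Proposition \ref{prop.benoistquint}; your transported section $\Sigma'=\alpha_X(\Sigma)$ and the intertwining $a_{\Sigma'}\circ\Phi=\alpha_X\circ a_\Sigma$ are exactly the details that one-line proof leaves implicit. The only care needed is the one you already flag: the identification $\sigma(x')=[\sigma_{\Sigma}(x')]$ should be invoked for general $g$ (not just $g=e$) so that the negligible-set bookkeeping and the propagation along $G$-orbits via $\alpha_G\circ R_g=R_{\alpha_G(g)}\circ\alpha_G$ are handled in one stroke.
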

\begin{proof}
This is a consequence of the ``uniqueness'' part in proposition \ref{prop.benoistquint}.
\end{proof}

We will need a technical definition and a few related facts.
\begin{definition}
A metric space $Y$ is called \emph{doubling} if there is a constant $N \geq 2$ such that any closed ball $B(y,r)$ can be covered by $N$ balls of radius $r/2$. The smallest of such numbers $N$ is the \emph{doubling constant} of $Y$.
\end{definition}
The relevance of this notion to our work is because of the following 
\begin{lemma}
Let $G$ be a locally compact topological group endowed with a right-invariant metric, and assume that $G$ admits a group automorphism $\alpha_G$ which is a similarity transformation with similarity ratio $<1$. Then $G$ is a doubling space.
\end{lemma}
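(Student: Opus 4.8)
The plan is to reduce the doubling property to a single compact band of scales, exploiting the two symmetries at hand: right-invariance of $d$, which makes all balls of a given radius mutually isometric, and the self-similarity furnished by $\alpha_G$, which makes the geometry at scale $r$ identical to that at scale $\alpha r$. Write $e$ for the identity of $G$ and, for $r>0$, let $N(r)$ be the least number of balls of radius $r/2$ needed to cover $B(e,r)$. First I would reduce to balls centred at $e$: right-invariance gives $d(g,h)=d(gh^{-1},e)$, hence $B(y,r)=B(e,r)\,y$, and right translation by $y$ is an isometry. A covering of $B(e,r)$ by $k$ balls of radius $r/2$ therefore transports to such a covering of $B(y,r)$, and conversely, so that $G$ is doubling precisely when $\sup_{r>0}N(r)<\infty$.

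Next I would use $\alpha_G$ to collapse all scales onto a compact fundamental domain. Being a group automorphism, $\alpha_G$ fixes $e$, and being a similarity of ratio $\alpha$ it maps $B(e,r)$ bijectively onto $B(e,\alpha r)$ while carrying balls of radius $r/2$ to balls of radius $\alpha r/2$. Coverings of $B(e,r)$ and of $B(e,\alpha r)$ are thus in bijection, yielding the \emph{exact} identity $N(\alpha r)=N(r)$, and hence $N(\alpha^k r)=N(r)$ for every $k\in\mathbf Z$. Consequently, for any fixed $\varepsilon>0$ one has $\sup_{r>0}N(r)=\sup_{r\in[\alpha\varepsilon,\varepsilon]}N(r)$, since every $r$ is of the form $\alpha^k r'$ with $r'\in[\alpha\varepsilon,\varepsilon]$.

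Finally I would invoke local compactness to bound $N$ on this interval. Because $d$ induces the (locally compact) group topology, some closed ball $\overline B(e,\varepsilon)$ sits inside a compact neighbourhood of $e$ and is therefore compact. The family $\{B(x,\alpha\varepsilon/2):x\in\overline B(e,\varepsilon)\}$ is an open cover of this compact set, so finitely many of them, say $M$, already cover it. For any $r\in[\alpha\varepsilon,\varepsilon]$ we have $\alpha\varepsilon/2\le r/2$, so these same $M$ balls, now read as balls of radius $r/2$, cover $B(e,r)\subseteq\overline B(e,\varepsilon)$; thus $N(r)\le M$ on the fundamental domain and hence, by the scaling identity, $N(r)\le M$ for all $r>0$. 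This shows $G$ is doubling with constant at most $M$ (and at least $2$).

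The only genuinely delicate point is securing uniformity across \emph{all} scales rather than over a discrete geometric sequence, and this is exactly what the exact relation $N(\alpha r)=N(r)$ buys: it reduces the supremum to the compact interval $[\alpha\varepsilon,\varepsilon]$, on which the trivial inequality $\alpha\varepsilon/2\le r/2$ lets a single finite cover of $\overline B(e,\varepsilon)$ by $(\alpha\varepsilon/2)$-balls serve simultaneously for every admissible $r$. I would also be careful to note that we use compatibility of $d$ with the topology, so that local compactness genuinely produces a compact closed ball $\overline B(e,\varepsilon)$; this compatibility is implicit in the standing hypotheses on $G$.
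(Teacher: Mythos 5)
Your proof is correct and follows essentially the same route as the paper's: right-invariance reduces everything to balls centred at the identity, the similarity $\alpha_G$ collapses all scales onto a fixed compact ball, and a single finite cover at the bottom scale $\alpha\varepsilon/2\le r/2$ is transported back to every radius. The only cosmetic difference is that the paper first shows \emph{every} closed ball is compact (via $\alpha_G^k B(e,R)\subset B(e,\rho)$) and then works with $B(e,1)$, whereas you sidestep this by taking $\varepsilon$ small enough that $\overline{B}(e,\varepsilon)$ is compact directly from local compactness; both are fine.
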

\begin{proof}
To begin with, note that any closed ball in $G$ is compact. Indeed, since $G$ is locally compact we can find a radius $\rho>0$ small enough so that the closed ball $B(e,\rho)$ is a compact set. If $R>0$ is arbitrary, we have $\alpha_G^k B(e,R) \subset B(e,\rho)$ for any $k$ large enough; since $\alpha_G$ is continuous, this implies that $B(e,R)$ is itself a compact set. So we see that any bounded closed subset of $G$ must be compact.

We now prove that $G$ is a doubling space. Since $B(e,1)$ is a compact set and $B(e,\alpha/2)$ has non-void interior, we may find a finite sequence $g_1,\ldots,g_N \in G$ such that the right translates $B(e,\alpha/2)g_i$ cover $B(e,1)$ ($i=1,\ldots,n$). Now fix some real number $R>0$ and let $k$ be the integer such that $\alpha^{k+1} \leq R < \alpha^k$. We have $B(e,R) \subset B(e,\alpha^k)$, so $\alpha_G^{-k} B(e,R) \subset B(e,1)$, and we deduce that the right translates $B(e,\alpha^{k+1}/2) \alpha^k_G (g_i)$ ($i=1,\ldots,n$) cover $B(e,R)$. Since $\alpha^{k+1} \leq R$, we see that $B(e,R)$ can be covered by $N$ balls of radius $R/2$ (recall that the metric on $G$ is right invariant). Thus $G$ is a doubling space.
\end{proof}

A metric group satisfying the hypotheses of this lemma will be called \emph{a doubling group}.

\begin{proposition}[\cite{KaRaSu}, lemma 2.2]
Let $Y$ be a doubling metric space and $\mu$ a Borel probability measure on $Y$. There is a constant $D$, depending only on the doubling constant of $Y$, such that for $\mu$-almost every $y \in Y$, 
\[ \liminf_{\rho \to 0} \frac{\log \mu(B(y,\rho))}{\log \rho} \leq D \text. \]
\end{proposition}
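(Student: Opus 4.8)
The plan is to show that the \emph{bad set} $B=\{y\in Y:\diminf(\mu,y)>D\}$ is $\mu$-negligible, provided $D$ is chosen strictly larger than $\log_2 N$, where $N$ is the doubling constant of $Y$. First I would unwind the definition: since $\log\rho<0$ for small $\rho$, the inequality $\frac{\log\mu(B(y,\rho))}{\log\rho}>D$ is equivalent to $\mu(B(y,\rho))<\rho^D$. Hence $y\in B$ exactly when there is $\rho_0>0$ with $\mu(B(y,\rho))<\rho^D$ for all $\rho<\rho_0$, and writing $B=\bigcup_m B_m$ with $B_m=\{y:\mu(B(y,\rho))<\rho^D\text{ for all }\rho<1/m\}$, it suffices to prove $\mu(B_m)=0$ for each $m$. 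A doubling space is separable, so $Y$ is a countable union of balls $B(z_j,R)$, and it is enough to bound the mass of $B_m$ inside one fixed ball $B(z,R)$.

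The heart of the matter is a purely metric counting estimate supplied by the doubling property. Iterating the definition, any ball of radius $r$ is covered by $N^k$ balls of radius $r/2^k$; taking $k\approx\log_2(R/r)$ shows that $B(z,R)$ can be covered by at most $C(R)\,r^{-\log_2 N}$ balls of radius $r$, and dually that a maximal $r$-separated subset of $B(z,R)$ has at most $C(R)\,r^{-\log_2 N}$ points. At the dyadic scale $r=2^{-n}$ I would fix such a maximal separated set $\{y_i\}$, so that the balls $B(y_i,r)$ cover $B(z,R)$. If $y\in B_m\cap B(z,R)$ and $2^{-n}$ is small enough, the net point $y_i$ with $y\in B(y_i,r)$ satisfies $B(y_i,r)\subseteq B(y,2r)$, whence $\mu(B(y_i,r))\le\mu(B(y,2r))<(2r)^D$; thus every bad point in $B(z,R)$ lies, at every small scale, in one of the \emph{light} net balls $B(y_i,r)$ with $\mu(B(y_i,r))<(2r)^D$.

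Finally I would combine the count with the mass bound. Letting $S_n$ be the union of the light net balls at scale $r=2^{-n}$, their number is at most $C(R)\,2^{n\log_2 N}$, so
\[ \mu(S_n)\le C(R)\,2^{n\log_2 N}\cdot(2\cdot 2^{-n})^D=C'(R)\,2^{n(\log_2 N-D)}. \]
Since $B_m\cap B(z,R)\subseteq\bigcap_{n\ge n_0}S_n$ for $n_0$ large, and $D>\log_2 N$ forces $\mu(S_n)\to 0$, we obtain $\mu(B_m\cap B(z,R))\le\inf_n\mu(S_n)=0$; summing over the countable cover yields $\mu(B)=0$, and any $D>\log_2 N$ (say $D=\log_2 N+1$) works, depending only on the doubling constant. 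The one genuinely substantive point — and the place to be careful — is the quantitative counting step: the doubling hypothesis must be leveraged, through its iteration, into the polynomial bound $r^{-\log_2 N}$ on the number of $r$-separated points in a bounded ball, for it is precisely the exponent $\log_2 N$ appearing there that must be beaten by $D$ in order for the factor $2^{n(\log_2 N-D)}$ to decay. Everything else is the standard first Borel–Cantelli packaging.
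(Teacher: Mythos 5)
Your argument is correct. The paper does not prove this proposition at all; it simply cites it from \cite{KaRaSu} (lemma 2.2), so there is no in-paper proof to compare against. What you give is the standard self-contained argument: reduce to the sets $B_m$, localize to a ball $B(z,R)$, use iterated doubling to bound the cardinality of an $r$-net of $B(z,R)$ by $C(R)\,r^{-\log_2 N}$, observe that every point of $B_m\cap B(z,R)$ lies in a ``light'' net ball of mass $<(2r)^D$ at every small dyadic scale, and conclude by summing. The only points worth tightening are cosmetic: in the separated-set count one should use covering balls of radius strictly less than $r/2$ (say $r/3$) so that each contains at most one point of an $r$-separated family, and one should note that $B_m\cap B(z,R)$ need not be assumed measurable since it is trapped inside the measurable sets $S_n$ whose measures tend to $0$. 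Neither affects the validity; any $D>\log_2 N$ works and depends only on the doubling constant, as required.
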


The next lemma is both straightforward and basic to our work.
\begin{lemma} \label{lemma.def.dim}
Under the hypotheses stated at the beginning of section 3, for $\lambda$-almost every $x$, the conditional measure $\sigma(x)$ is \emph{exact-dimensional}, and its dimension does not depend on $x$. In other words, for $\lambda$-almost every $x \in X$ and $\sigma(x)$-almost every $g \in G$
\[ \lim_{\rho \to 0} \frac{\log \sigma(x)(B(g,\rho))}{\log \rho} \]
exists, is finite, and does not depend on $x$ nor on $g$. 
\end{lemma}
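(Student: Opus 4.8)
The plan is to prove that $\sigma(x)$ is exact-dimensional with an $x$-independent dimension by combining the scaling relation from Lemma \ref{lemma.alphaequiv} with the ergodicity of $\alpha_X$. The central idea is that $\alpha_G$ contracts $G$ by the fixed ratio $\alpha<1$, so iterating $\alpha_X$ zooms the conditional measures in by a geometric factor, and the local dimension becomes a cocycle-type quantity that ergodicity forces to be constant almost everywhere.

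\medskip

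First I would fix a suitable reference scale. Since $\sigma(x)$ is only a projective class, any quantity I define must be scale-invariant; the local dimension $\frac{\log \sigma(x)(B(g,\rho))}{\log \rho}$ is \emph{not} scale-invariant as $\rho\to 0$, but it becomes so in the limit, which is exactly why one studies pointwise dimension rather than the measure itself. Concretely, I would define for each $x$ the lower and upper pointwise dimensions
\[ \diminf(\sigma(x),g) = \liminf_{\rho\to 0}\frac{\log \sigma(x)(B(g,\rho))}{\log \rho}, \quad \dimsup(\sigma(x),g)=\limsup_{\rho\to 0}\frac{\log \sigma(x)(B(g,\rho))}{\log \rho}, \]
which are well-defined independently of the representative of the projective class. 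The doubling lemma together with the proposition of \cite{KaRaSu} guarantees these are finite almost everywhere (after normalizing $\sigma(x)$ to a probability measure on a bounded ball, which is legitimate since bounded closed balls in $G$ are compact).

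\medskip

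The key step is to exploit the equivariance $\sigma(\alpha_X x)=(\alpha_G)_*\sigma(x)$. Because $\alpha_G$ is a similitude of ratio $\alpha$, pushing forward by $\alpha_G$ rescales balls: $(\alpha_G)_*\sigma(x)\bigl(B(\alpha_G g,\alpha\rho)\bigr)=\sigma(x)\bigl(B(g,\rho)\bigr)$. Substituting into the difference quotient and using $\log(\alpha\rho)=\log\rho+\log\alpha$, one checks that the $\liminf$ (and $\limsup$) defining the pointwise dimension of $\sigma(\alpha_X x)$ at $\alpha_G g$ coincides with that of $\sigma(x)$ at $g$; the additive constant $\log\alpha$ washes out in the $\rho\to 0$ limit. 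Hence the functions
\[ x \mapsto \diminf(\sigma(x),g_x), \quad x\mapsto \dimsup(\sigma(x),g_x) \]
(for $g_x$ chosen $\sigma(x)$-generically, e.g.\ the identity, which lies in the support $\lambda$-a.e.\ by the unproved lemma) are $\alpha_X$-invariant Borel functions. By the $\alpha_X$-ergodicity of $\lambda$ (hypothesis 4), each is $\lambda$-almost surely equal to a constant. This already pins down that the dimension, \emph{if it exists}, does not depend on $x$.

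\medskip

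The main obstacle I anticipate is upgrading from invariance-of-the-value to actual exact-dimensionality, i.e.\ showing $\diminf=\dimsup$ almost everywhere rather than merely that each is a.e.\ constant. The scaling argument alone only shows each extreme dimension is individually constant; it does not directly force them to agree. To close this gap I would appeal to a martingale or Shannon--McMillan--Breiman style argument: partition $G$ by the nested family of balls obtained from iterating $\alpha_G$ on a fixed tiling of $B(e,1)$, and write $\log\sigma(x)(B(g,\alpha^n\rho_0))$ as an additive ergodic sum along the orbit $\alpha_X^n x$. The increments form a stationary sequence under the $\alpha_X$-dynamics, and Birkhoff's ergodic theorem (applied on the suspension/lacunary-section model where the dynamics is genuinely measure-preserving after the counting-measure lift of Section 2) yields convergence of $\frac{1}{n}\log\sigma(x)(B(g,\alpha^n\rho_0))$ to a constant for $\sigma(x)$-a.e.\ $g$. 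A standard comparison between dyadic-type scales $\alpha^n\rho_0$ and arbitrary $\rho\to 0$, using the right-invariance and doubling of $d$, then interpolates to give convergence of the full $\liminf=\limsup$ limit. The delicate technical point is handling the projective (rather than genuine-measure) nature of $\sigma(x)$ and the $\sigma$-finiteness issues flagged in Section 2, which is presumably why the statement is phrased "for $\lambda$-a.e.\ $x$ and $\sigma(x)$-a.e.\ $g$"; I would reduce to a fixed bounded window via the completeness of the lacunary section before running the ergodic theorem.
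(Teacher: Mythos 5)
Your proposal is correct and follows essentially the same route as the paper: the decisive step in both is that $\log\bigl(\sigma(x)(B(e,\alpha^n))/\sigma(x)(B(e,1))\bigr)$ is \emph{exactly} the Birkhoff sum $\sum_{k=0}^{n-1} I(\alpha_X^{-k}x)$ of the increment $I(x)=\log\bigl(\sigma(x)(B(e,\alpha))/\sigma(x)(B(e,1))\bigr)$, so the ergodic theorem gives existence (not merely constancy) of the limit at $g=e$, finiteness comes from the doubling bound of \cite{KaRaSu}, and the passage to $\sigma(x)$-almost every $g$ is your recentering via $\sigma(gx)=(R_g)_*\sigma(x)$ together with right-invariance of the metric. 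The ``main obstacle'' you anticipate (reconciling $\diminf$ and $\dimsup$) therefore never arises: because $\alpha_G$ fixes the identity, the cocycle identity at $e$ is exact rather than approximate, and Birkhoff already forces the full limit to exist.
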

\begin{proof}
First we prove that the limit (maybe infinite) exists for almost every $x$ when $g$ is the identity of $G$. It is enough to show that 
\[ \lim_{n \to \infty} \frac{\log \sigma(x)(B(e,\alpha^n))}{n \log \alpha} \]
exists for $\lambda$-almost every $x$. Let
\[ I(x)= \log \frac{\sigma(x)(B(e,\alpha))}{\sigma(x)(B(e,1))} \text. \]
Note that $I(x) \leq 0$. By  lemma \ref{lemma.alphaequiv} and an obvious induction we get
\[ \sum_{k=0}^{n-1} I(\alpha_X^{-k} x)= \log \frac{\sigma(x)(B(e,\alpha^n))}{\sigma(x)(B(e,1))} \]
($n \geq 1$). Upon dividing both sides by $n \log \alpha$ and letting $n \to \infty$, we see that
\[ \lim_{n \to \infty} \frac{\log \sigma(x)(B(e,\alpha^n))}{n \log \alpha} = \lim_{n \to \infty} \frac{1}{n \log \alpha}\sum_{k=0}^{n-1} I(\alpha_X^{-k}x) \]
where the right side exists and is equal, for $\lambda$-almost every $x \in X$, to 
\[ \frac{1}{\log \alpha} \int_X I(x) \mathrm{d} \lambda(x) \in [0,+\infty] \]
by virtue of the ergodic theorem, since $\lambda$ is $\alpha_X^{-1}$-ergodic. Now the fact that this limit is almost surely finite is an obvious consequence of the previous proposition. Let $\delta\in [0,\infty[$ be this finite number, thus
\[ \lim_{\rho \to 0} \frac{\log \sigma(x)(B(e,\rho))}{\log \rho} = \delta \]
for every $x$ in some conull subset $X'$ of $X$. 

It is easy to check that, since $X'$ is conull, $gx$ must belong to $X'$ for $\lambda$-almost every $x \in X$ and $\sigma(x)$-almost every $g \in G$. Also, for $\lambda$-almost every $x$ and $\sigma(x)$-almost every $g \in G$, we have
\[ \sigma(gx) = (R_g)_* \sigma(x) \]
whence
\[ \delta = \lim_{\rho \to 0} \frac{\log \sigma(gx)(B(e,\rho))}{\log \rho} = \lim_{\rho \to 0} \frac{\log \sigma(x)(B(g,\rho))}{\log \rho} \]
because the metric carried by $G$ is right-invariant. The proposition is proven.
\end{proof}

\begin{definition}
The limit in the previous lemma is called \emph{dimension of $\lambda$ along $G$}, and is denoted by $\dim(\lambda,G)$.
\end{definition}

Note the following formula:
\begin{equation} \label{eq.dimension}
\Dim(\lambda,G) = \frac{1}{\log \alpha} \int_X \log \left( \frac{\sigma(x)(\alpha_H B)}{\sigma(x)(B)} \right) \mathrm{d}\lambda(x) 
\end{equation}
for any relatively compact neighbourhood $B$ of the identity.

We now state some useful facts in the setting of doubling metric spaces.
\begin{lemma}[\cite{KaRaSu}, proposition A.2]
Let $Y$ be a doubling metric space, and $\mu$ be a finite Borel measure on $Y$. Let $A$ be some Borel subset of $Y$ such that $\mu(A)>0$. For almost every $y \in A$, we have
\[ \limsup_{\rho \to 0} \frac{\mu(A \cap B(y,\rho))}{\mu(B(y,\rho))} = 1 \text. \]
\end{lemma}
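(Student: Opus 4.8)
The plan is to prove the estimate by passing to the contrapositive density statement and, crucially, to circumvent the absence of any doubling hypothesis on $\mu$ by replacing balls with a nested system of dyadic cubes, on which a martingale argument applies cleanly.

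First I would dispose of the trivial cases and set up the reduction. Atoms cause no difficulty: if $\mu(\{y\})>0$ and $y\in A$, then $\mu(B(y,\rho))\to\mu(\{y\})$ and $\mu(A\cap B(y,\rho))\to\mu(\{y\})$ as $\rho\to0$, so the ratio tends to $1$. Hence it suffices to treat the non-atomic part and to show that for each rational $t<1$ the set $E_t=\{y\in A:\ \limsup_{\rho\to0}\mu(A\cap B(y,\rho))/\mu(B(y,\rho))<t\}$ is $\mu$-null, since the exceptional set is the countable union of the $E_t$. I note that the naive approach --- covering $E_t$ by balls on which $A$ has small relative measure and extracting a disjoint subfamily via the $5r$-covering lemma --- breaks down precisely because $\mu$ is not assumed doubling, so the enlargement from $B(y_i,\rho_i)$ to $B(y_i,5\rho_i)$ cannot be controlled at the level of the measure.

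The tool that avoids any enlargement is a system of generalized dyadic cubes adapted to the doubling metric space $Y$ (Christ; Hyt\"onen--Kairema): for each $k$ a Borel partition $\{Q^k_\alpha\}_\alpha$ of $Y$ into ``cubes'' of diameter $\lesssim 2^{-k}$, nested in $k$, with the roundness property $B(z^k_\alpha,c_0 2^{-k})\subseteq Q^k_\alpha\subseteq B(z^k_\alpha,C_0 2^{-k})$ for constants depending only on the doubling constant. Writing $\mathcal F_k$ for the $\sigma$-algebra generated by the level-$k$ cubes, the sequence $(\mathcal F_k)$ is a filtration whose limit generates the Borel $\sigma$-algebra. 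Applying Doob's martingale convergence theorem to $f=\mathbf 1_A\in L^1(\mu)$ gives $E[f\mid\mathcal F_k]\to f$ $\mu$-almost everywhere; since $E[f\mid\mathcal F_k](y)=\mu(A\cap Q_k(y))/\mu(Q_k(y))$, where $Q_k(y)$ is the level-$k$ cube containing $y$, I obtain $\mu(A\cap Q_k(y))/\mu(Q_k(y))\to 1$ for $\mu$-almost every $y\in A$. This step uses no regularity of $\mu$ whatsoever.

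The remaining, and main, obstacle is to transfer this cube statement to balls \emph{centred at} $y$. One sandwiches $B(y,\rho)$ between cubes of comparable scale using the roundness property, but two difficulties appear: the cube $Q_k(y)$ need not be centred near $y$, and $\mu$ is not doubling, so I cannot directly compare the $\mu$-measure of a circumscribing ball with that of $Q_k(y)$. I would resolve this by restricting attention to ``good scales'' at which $\mu$ is locally doubling along $y$, i.e.\ $\mu(B(y,C 2^{-k}))\le\kappa\,\mu(B(y,2^{-k}))$; a pigeonhole argument applied to the nonincreasing sequence $\mu(B(y,2^{-k}))$ shows that, for $\mu$-almost every non-atomic $y$, such scales occur for infinitely many $k$. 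Intersecting this full-measure set of good scales with the (valid for all large $k$) conclusion of the martingale step, I can compare $B(y,\rho)$ with the boundedly many cubes meeting it --- the bound coming from the doubling of the space --- and let $k\to\infty$ along good scales to conclude $\limsup_{\rho\to0}\mu(A\cap B(y,\rho))/\mu(B(y,\rho))=1$. Controlling the interplay between the off-centre position of the cubes and the good-scale selection is the delicate point, and is exactly where the geometry of the doubling space, rather than any property of $\mu$, must be used.
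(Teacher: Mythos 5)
First, a remark on the comparison itself: the paper offers no proof of this lemma at all — it is quoted directly from the reference given in its header — so there is no in‑paper argument to measure yours against, and your proposal has to stand on its own. It does not quite do so: there is a genuine gap at precisely the point you flag as delicate, and the repair you gesture at would fail. The martingale step controls $\mu(A\cap Q_k(y))/\mu(Q_k(y))$ only for the cube \emph{containing} the density point $y$; it says nothing about the neighbouring level-$k$ cubes. Since $y$ may lie arbitrarily close to the boundary of $Q_k(y)$ at \emph{every} scale (and, $\mu$ being arbitrary, the set of such $y$ need not be $\mu$-null), you cannot arrange $B(y,\rho)\subset Q_k(y)$; and bounding the \emph{number} of cubes meeting $B(y,\rho)$ does not help, because a neighbouring cube may be disjoint from $A$ while carrying almost all of $\mu(B(y,\rho))$, so the ratio $\mu(B(y,\rho)\setminus A)/\mu(B(y,\rho))$ is not forced to be small. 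The standard repair is to use finitely many \emph{adjacent} dyadic systems (Hyt\"onen--Kairema), chosen so that every ball $B(y,\rho)$ is contained in a single cube of diameter $\leq C\rho$ from one of the systems — necessarily the cube of that system containing $y$ — and to run the martingale argument in each system; combined with your good scales $\mu(B(y,C\rho))\leq\kappa\,\mu(B(y,\rho))$ this closes the argument. A second, smaller, gap: the good-scales claim is not a pigeonhole on the monotone sequence $\mu(B(y,2^{-k}))$, which can decay super-geometrically at an individual point; it follows instead from the proposition stated immediately before this lemma in the paper ($\diminf(\mu,y)\leq D$ for $\mu$-a.e.\ $y$ in a doubling space), since $\mu(B(y,Cr))>\kappa\,\mu(B(y,r))$ for all small $r$ forces $\diminf(\mu,y)\geq\log\kappa/\log C$, impossible a.e.\ once $\kappa>C^{D}$.

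Once the good-scales lemma is in hand, the dyadic machinery is in fact unnecessary, and a direct covering argument is much shorter. Let $E_t$ be the set of $y\in A$ at which the upper density is $<t<1$; fix $\delta>0$ and an open $U\supset E_t$ with $\mu(U)\leq\mu(E_t)+\delta$. For a.e.\ $y\in E_t$ choose a small radius $r(y)$ with $B(y,r(y))\subset U$, with $\mu(A\cap B(y,r(y)))<t\,\mu(B(y,r(y)))$, and with $\mu(B(y,5r(y)))\leq K\mu(B(y,r(y)))$ (a good radius, $K$ depending only on the doubling constant). The $5r$-covering theorem yields a disjoint subfamily $B(y_i,r_i)$ with $E_t\subset\bigcup_i B(y_i,5r_i)$ up to a null set, whence
\[ \mu(E_t)\;\leq\;\sum_i\mu(B(y_i,5r_i))\;\leq\;K\sum_i\mu(B(y_i,r_i))\;\leq\;\frac{K}{1-t}\sum_i\mu\bigl(B(y_i,r_i)\setminus A\bigr)\;\leq\;\frac{K}{1-t}\,\mu(U\setminus A)\;\leq\;\frac{K\delta}{1-t}, \]
using that the sets $B(y_i,r_i)\setminus A$ are disjoint and contained in $U\setminus A$, and that $\mu(U\setminus A)\leq\mu(U)-\mu(E_t)\leq\delta$. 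Letting $\delta\to0$ gives $\mu(E_t)=0$. This is the route that actually exploits the doubling hypothesis on $Y$ (through the good radii) while never enlarging a ball at the level of the measure, and it is, in substance, the argument of \cite{KaRaSu}.
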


Note that contrary to the well-known density theorem of Lebesgue (which holds in euclidian space and more generally in metric spaces satisfying the Besicovitch covering property) the left side is an upper limit, not a genuine limit.

\begin{proposition} \label{prop.additivity.1}
Let $Y$ be a complete doubling metric space, $Z$ a complete separable metric space, and $\pi : Y \to Z$ a Lipschitz mapping. Let $\mu$ be a Borel probability measure on $Y$, $\nu$ the push-forward of $\mu$ through $\pi$, and 
\[ \mu = \int_Z d\nu(z) \mu_z \]
be the disintegration of $\mu$ along $\pi$.

Assume that there exists a constant $\gamma \geq 0$ and a Borel mapping $\delta : Z \to [0,\infty[$ such that for $\mu$-almost every $y$ in $Y$ the following hold:
\[ \diminf(\mu_{\pi(y)},y) \geq \gamma,\quad \diminf(\nu,\pi(y)) \geq \delta(\pi(y)) \text. \]
Then for $\mu$-almost every $y \in Y$, we have
\[ \dimsup(\mu,y) \geq \gamma + \delta(\pi(y)) \text. \]
If, instead of assuming that $Y$ is doubling, we assume that this space is complete separable and satisfies the Besicovitch covering property, we get the stronger conclusion
\[ \diminf(\mu,y) \geq \gamma+\delta(\pi(y)) \]
for $\mu$-almost every $y$.
\end{proposition}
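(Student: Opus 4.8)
The plan is to prove a pointwise lower bound for the dimension of the total measure $\mu$ by combining the fiberwise dimension $\gamma$ with the base dimension $\delta(\pi(y))$ via a covering argument, handling the two regularity regimes (doubling versus Besicovitch) through the two density results quoted just above the proposition.

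First I would set up notation for a ball in $Y$. Fix a small radius $\rho$ and a point $y$; since $\pi$ is $L$-Lipschitz, $\pi(B(y,\rho)) \subseteq B(\pi(y),L\rho)$ in $Z$. The key identity is the disintegration formula
\[ \mu(B(y,\rho)) = \int_{B(\pi(y),L\rho)} \mu_z(B(y,\rho))\,\mathrm{d}\nu(z) \text, \]
where I have used that $\mu_z$ is carried by the fiber $\pi^{-1}(z)$, so points of $B(y,\rho)$ whose base image lies outside $B(\pi(y),L\rho)$ contribute nothing. The strategy is then to bound the right-hand side from above. For the fiber through $y$ itself, the hypothesis $\diminf(\mu_{\pi(y)},y) \geq \gamma$ gives $\mu_{\pi(y)}(B(y,\rho)) \leq \rho^{\gamma-\varepsilon}$ for small $\rho$; I would need to upgrade this to a uniform control over \emph{nearby} fibers $z \in B(\pi(y),L\rho)$, which is the delicate point. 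Combined with the base estimate $\nu(B(\pi(y),L\rho)) \leq (L\rho)^{\delta(\pi(y))-\varepsilon}$ coming from $\diminf(\nu,\pi(y)) \geq \delta(\pi(y))$, this would yield $\mu(B(y,\rho)) \lesssim \rho^{\gamma+\delta(\pi(y))-2\varepsilon}$ and hence the conclusion.

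The main obstacle is precisely that the fiberwise estimate holds $\mu_{\pi(y)}$-almost everywhere on a \emph{single} fiber, whereas the integral above runs over a small neighborhood of fibers; one cannot naively transfer an almost-everywhere bound on one fiber to its neighbors. The standard remedy is to avoid integrating $\mu_z(B(y,\rho))$ directly and instead work on $Y$ itself: for fixed $n$, consider the Borel set $A_n = \{\, y : \mu_{\pi(y)}(B(y,\rho)) \leq \rho^{\gamma-\varepsilon} \text{ and } \nu(B(\pi(y),L\rho)) \leq (L\rho)^{\delta(\pi(y))-\varepsilon} \text{ for all } \rho < 1/n \,\}$, which by hypothesis has $\mu$-measure tending to $1$. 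On $A_n$ the two estimates hold simultaneously, and the disintegration identity, restricted so that $y \in A_n$, gives the desired bound directly. This is where the density lemmas enter. In the doubling case, the density theorem (the displayed $\limsup$-version quoted above) lets me recover, along a subsequence of radii, that $y$ is a density point of $A_n$ in the sense needed to localize the argument, which only yields the weaker $\dimsup$ conclusion because the relevant limit is an upper limit rather than a genuine one. Under the Besicovitch covering property, the full Lebesgue density theorem applies, the upper limit becomes an honest limit, and one obtains the stronger $\diminf$ conclusion.

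Finally I would let $\varepsilon \to 0$ through a countable sequence and take a union of the conull sets, so that for $\mu$-almost every $y$ the inequality $\gamma + \delta(\pi(y)) - 2\varepsilon$ is beaten for every $\varepsilon$ in the sequence, giving $\dimsup(\mu,y) \geq \gamma + \delta(\pi(y))$ (respectively $\diminf$ in the Besicovitch case). The only point requiring genuine care, beyond the density-point localization, is measurability: one must check that $y \mapsto \mu_{\pi(y)}(B(y,\rho))$ and $z \mapsto \nu(B(z,\rho))$ are jointly Borel so that the sets $A_n$ are measurable, which follows from standard facts about disintegrations on standard Borel spaces together with the separability of $Y$ and $Z$.
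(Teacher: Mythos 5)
Your outline is correct and follows essentially the same route as the paper, whose proof is simply a citation to Ledrappier--Young (lemma 11.3.1) with the instruction to replace the Lebesgue density theorem by the $\limsup$-density lemma for doubling spaces; your sets $A_n$ and the density-point localization reconstruct exactly that argument, including why only $\dimsup$ survives in the doubling case. The one step worth writing out explicitly is the transfer to neighbouring fibers you flag as delicate: for each fiber meeting $A_n\cap B(y,\rho)$ one picks a point $y'$ of $A_n$ in that fiber inside the ball and applies the fiberwise estimate at $y'$ with radius $2\rho$, which bounds $\mu_z(A_n\cap B(y,\rho))$ uniformly over the contributing fibers.
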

\begin{proof}
See \cite{LY} lemma 11.3.1 when $Y$ satisfies the Besicovitch covering property. If $Y$ is a doubling space just copy the proof and use the previous lemma instead of Lebesgue density theorem to obtain the weaker conclusion.
\end{proof}

\begin{remark} This is a key proposition for the main result, so it may be worth taking some time to comment on this inequality. Choose some continuous mapping $f: [0,1] \to [0,1]$ whose graph has Hausdorff dimension 2, see \emph{e.g.} \cite{Wingren}. Let $\nu$ be the Lebesgue measure on $Z=[0,1]$, and $\mu$ be the push-forward of $\nu$ through the mapping $x \mapsto (x,f(x))$, so that $\nu$ is itself the push-forward of $\mu$ through the projection onto the first factor. We may disintegrate $\mu$ above $\nu$, and obviously we get
\[ \mu=\int_0^1 d\nu(x) \delta_{(x,f(x))} \]
where $\delta_{(x,y)}$ is the Dirac mass at $(x,y)$. In particular, the previous proposition amounts to the obvious inequality
\[ 2 \geq 1\text. \]
This illustrates the fact that dimension  is only ``super-additive'', and we should not expect  equality to hold in general, unless some significant geometric assumption is made on the measures we are looking at.
\end{remark}

In the same way one proves the following
\begin{proposition} \label{prop.additivity.2}
Let $Y$ be a complete doubling metric space, $Z$ a standard Borel space, and $\pi : Y \to Z$ a Borel mapping. Let $\mu$ be a Borel probability measure on $Y$, $\nu$ the pushforward of $\mu$ through $\pi$, and 
\[ \mu = \int_Z d\nu(z) \mu_z \]
be the disintegration of $\mu$ along $\pi$. 

Assume that there exists a constant $\gamma \geq 0$ such that for $\mu$-almost every $y$ in $Y$ the following holds:
\[ \diminf(\mu_{\pi(y)},y) \geq \gamma \text. \]
Then for $\mu$-almost every $y \in Y$, we have
\[ \dimsup(\mu,y) \geq \gamma \text. \]
If, instead of assuming that $Y$ is doubling, we assume that this space is complete separable and satisfies the Besicovitch covering property, we get the stronger conclusion
\[ \diminf(\mu,y) \geq \gamma \]
for $\mu$-almost every $y$.
\end{proposition}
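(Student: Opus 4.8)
The plan is to reduce Proposition \ref{prop.additivity.2} to the already-established Proposition \ref{prop.additivity.1}, since the statement is essentially the special case $\delta \equiv 0$ together with the observation that no metric structure on the base $Z$ is actually needed for the lower-bound-along-fibres argument. The hypothesis here is strictly weaker than in Proposition \ref{prop.additivity.1}: we only assume the fibrewise lower bound $\diminf(\mu_{\pi(y)},y) \geq \gamma$ and ask for the corresponding bound on $\mu$ itself, with \emph{no} transverse contribution. Correspondingly $Z$ is only required to be a standard Borel space, and $\pi$ is only Borel. The conclusion $\dimsup(\mu,y) \geq \gamma$ (doubling case) or $\diminf(\mu,y) \geq \gamma$ (Besicovitch case) thus drops the term $\delta(\pi(y))$ entirely.

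First I would recall the mechanism behind Proposition \ref{prop.additivity.1}, namely the Ledrappier--Young style estimate from \cite{LY}, lemma 11.3.1, which compares $\mu(B(y,\rho))$ with the fibre mass $\mu_{\pi(y)}(B(y,\rho))$ multiplied by a factor controlling how much transverse mass the projection $\pi$ collects near $\pi(y)$. The key point is that one always has, at the level of the disintegration, the trivial inequality $\mu(B(y,\rho)) \geq$ (fibre contribution), so that the fibrewise dimension already furnishes a lower bound for the dimension of $\mu$ without invoking any property of $\nu$ or of the base metric. Concretely, for the doubling case one uses the density lemma (the \cite{KaRaSu} proposition A.2 quoted above) to pass from the fibre measures $\mu_z$ to the global measure $\mu$ along a set of full measure, extracting the upper-limit conclusion $\dimsup(\mu,y) \geq \gamma$; for the Besicovitch case one substitutes the genuine Lebesgue density theorem to upgrade the upper limit to a lower limit.

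In practice the cleanest route is to apply Proposition \ref{prop.additivity.1} verbatim with $\delta \equiv 0$, after remarking that its proof never uses the metric on $Z$ nor the Lipschitz property of $\pi$ once $\delta$ is taken to be zero: those ingredients enter only to control $\diminf(\nu,\pi(y))$, which is now irrelevant. Thus I would simply copy the proof of Proposition \ref{prop.additivity.1}, delete every occurrence of the transverse term, and observe that the remaining argument requires only that $(Z,\nu)$ be a standard Borel probability space so that the disintegration $\mu = \int_Z \mathrm{d}\nu(z)\,\mu_z$ exists. This is exactly what the phrase ``in the same way one proves the following'' signals.

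The main obstacle, such as it is, is bookkeeping rather than mathematical: one must verify that discarding the base metric does not secretly break any measurability or density argument used in the doubling case. The density lemma from \cite{KaRaSu} is applied on $Y$, which remains doubling (or Besicovitch), so its hypotheses are untouched; the only role of $Z$ is to index the disintegration, and standard Borelness of $Z$ is precisely the condition guaranteeing that a disintegration of the Borel probability measure $\mu$ along the Borel map $\pi$ exists and is essentially unique. Hence the proof goes through unchanged, and I would present it as: ``Copy the proof of Proposition \ref{prop.additivity.1}, taking $\delta \equiv 0$; the metric structure of $Z$ and the Lipschitz hypothesis on $\pi$ are then superfluous.''
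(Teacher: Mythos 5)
Your proposal is correct and matches the paper's intent exactly: the paper itself skips the proof with the remark that one cannot literally invoke Proposition \ref{prop.additivity.1} with $\delta=0$ (since $Z$ need not be metric nor $\pi$ Lipschitz), but that the proof is the same once the transverse term is discarded, which is precisely the reduction you describe. Your observation that the base metric and the Lipschitz hypothesis enter only through the control of $\diminf(\nu,\pi(y))$, and hence are superfluous here, is the right justification.
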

Note that here we do not assume that $Z$ is a Polish metric space, and even if this is so, $\pi$ might not be Lipschitz, so we are not just applying the previous proposition with $\delta=0$. Nonetheless the proof is similar and straightforward and we skip it.

\subsection{Monotonicity of dimension; transverse dimension}\label{subsection.monotonicity}
We keep the previous assumptions and we consider a closed subgroup $H$ of $G$ such that $\alpha_G(H)=H$, so that $\alpha_G$ induces an automorphism $\alpha_H$ of $H$. Obviously, the dimension of $\lambda$ along $H$ is well-defined; the aim of this subsection is to compare the two dimensions.

The following proposition is intuitively clear.
\begin{proposition} \label{prop.monotonicity}
We have
\[ \Dim(\lambda,G) \geq \Dim(\lambda,H) \text. \]
\end{proposition}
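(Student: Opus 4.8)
The plan is to realize $\Dim(\lambda,H)$ as the dimension of the conditional measures obtained by slicing $\sigma(x)$ along the cosets of $H$, and then to invoke the super-additivity of dimension (Proposition \ref{prop.additivity.2}). Fix a $\lambda$-generic point $x$ and write $\sigma=\sigma(x)$, a Radon measure on $G$ which by Lemma \ref{lemma.def.dim} is exact-dimensional of dimension $\Dim(\lambda,G)$. Consider the quotient map $\pi:G\to H\backslash G$; since $H$ is closed in the locally compact second countable group $G$, the target is standard Borel and $\pi$ is Borel, and its fibres are exactly the orbits $Hg$ of the left-translation action of $H$ on $G$. I would disintegrate $\sigma$ along $\pi$ and identify the resulting conditional measures.

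By the transitivity of disintegration (Proposition \ref{prop.transitivity}), for $\sigma$-almost every $g$ the conditional measure of $\sigma$ along the $H$-action at the point $g$ is the projective class of $\sigma_{H,\lambda}(gx)$, a measure on $H$; transported to the coset $Hg$ through the right translation $h\mapsto hg$, this is (up to a scalar) the conditional measure of the $\pi$-disintegration on the fibre through $g$. The crucial point is that, because $d$ is right-invariant, the map $h\mapsto hg$ is an isometry of $(H,d|_H)$ onto the coset $(Hg,d|_{Hg})$, so it preserves pointwise dimensions. Applying Lemma \ref{lemma.def.dim} to the subgroup $H$ --- together with the usual localization argument (as in its proof) showing that $gx$ is generic for the $H$-theory for $\lambda$-almost every $x$ and $\sigma(x)$-almost every $g$ --- I get that these conditional measures are exact-dimensional of dimension $\Dim(\lambda,H)$; in particular their lower pointwise dimension equals $\Dim(\lambda,H)$ almost everywhere.

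It then remains to feed this into Proposition \ref{prop.additivity.2} with $Y=G$ (a doubling metric space, being a doubling group), $Z=H\backslash G$, $\gamma=\Dim(\lambda,H)$, and $\mu$ a probability measure obtained by restricting $\sigma$ to a relatively compact neighbourhood of positive mass and normalizing (dimensions are local, and $G$ is covered by countably many such neighbourhoods, so this loses nothing). The proposition yields $\dimsup(\sigma,g)\geq \Dim(\lambda,H)$ for $\sigma$-almost every $g$; since $\sigma$ is exact-dimensional, $\dimsup(\sigma,g)=\Dim(\lambda,G)$, and the desired inequality follows.

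The main obstacle is the bookkeeping in the second paragraph: carefully matching the conditional measures furnished by the abstract disintegration along the $H$-action (Proposition \ref{prop.transitivity}) with the conditional measures of the elementary disintegration along the Borel map $\pi$, which requires knowing that the $H$-action on $G$ is smooth (so that lacunary sections are unnecessary and the two notions agree up to the coset parametrization), and then tracking the isometry $h\mapsto hg$ so that no dimension is lost in the transport to $Hg$. Everything else --- doubling of $G$, exact-dimensionality, and the localization to a probability measure --- is routine.
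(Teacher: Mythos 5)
Your proposal is correct and follows essentially the same route as the paper: condition $\sigma_G(x)$ on a relatively compact neighbourhood of the identity to get a probability measure, disintegrate it along the quotient map to the coset space, identify the conditional measures with (restrictions of) $\sigma_H(gx)$ via Proposition \ref{prop.transitivity} and the right-invariance of the metric, and conclude by Proposition \ref{prop.additivity.2} together with the exact-dimensionality from Lemma \ref{lemma.def.dim}. The only difference is that you spell out more carefully the coset bookkeeping (the isometry $h\mapsto hg$ and the smoothness of the $H$-action on $G$), which the paper leaves implicit.
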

\begin{proof}
Let us disintegrate $\lambda$ along $G$ and $H$, thus obtaining mappings
\[ \sigma_G : X \to \mathcal \Mrad(G),\quad \sigma_H : X \to \mathcal \Mrad(H) \text. \]
Let $\pi : G \to G/H$ be the quotient mapping. Recall that $G/H$ is a locally compact second countable topological space, so that both $G$ and $G/H$ are standard Borel space and $\pi$ is a Borel mapping.

Fix some relatively compact neighbourhood $B$ of the identity element of $G$. For any $x \in X$, we let $\nu^B(x)$ be the Borel probability measure on $G$ obtained by conditioning $\sigma(x)$ on $B$, \emph{i.e.}
\[ \nu^B(x) = \frac{\sigma_G(x)|B}{\sigma_G(x)(B)} \text. \]

For $\lambda$-almost every $x$ in $X$, if we disintegrate the measure $\nu^B(x)$ along $\pi$, the conditional measure we get above $\pi(g)$ is, almost surely, proportional to the restriction of $\sigma_H(gx)$ on some neighbourhood of the identity element in $H$ (proposition \ref{prop.transitivity}). In particular, these conditional measures are almost surely exact dimensional, with dimension $\dim(\lambda,H)$.

By virtue of proposition \ref{prop.additivity.2}, we get that
\[ \dimsup(\nu^B(x),g) \geq \dim(\lambda,H) \]
for $\nu^B(x)$-almost every $g$ in $G$. Now $B$ is a neighbourhood of the identity element in $G$, so that $\nu^B(x)$ must be exact dimensional of dimension $\dim(\lambda,G)$, and we have thus proved that
\[ \dim(\lambda,G) \geq \dim(\lambda,H) \text. \]
\end{proof}

We will now improve on this result, and introduce, following Ledrappier and Young \cite{LY}, a \emph{transverse dimension}, under the supplementary assumption that $H$ is a normal subgroup. We can endow the topological quotient group $G/H$, which is locally compact and second countable, with a natural metric. More precisely, we let
\[ d(gH,g'H) = \inf\{ d(gh,g'h)\ ;\ h \in H \} \]
and we may check that this defines a right invariant metric, such that the quotient mapping
\[ \pi: G \to G/H \]
is $1$-Lipschitz, and that the group automorphism $\alpha_{G/H} : G/H \to G/H$ induced by $\alpha_G$ is a similitude with same ratio as $\alpha_G$ itself.

We may now define the transverse to $H$ dimension of $\lambda$ along $G$. If $B$ is some relatively compact neighbourhood of identity in $G$, we let, as in the proof of the previous proposition,
\[ \nu^B(x) = \frac{\sigma(x)|B}{\sigma(x)(B)},\quad \theta^B(x)=\pi_* \nu^B(x) \]
\emph{i.e.} $\nu^B(x)$ is the probability measure obtained by conditioning $\sigma(x)$ on $B$ and $\theta^B(x)$ is the push-forward of $\nu^B(x)$ through $\pi$.
\begin{proposition} \label{prop.transverse.dimension}
There is a finite number $\DimT(\lambda,G/H)$ such that the following holds. Let $B$ be a compact neighbourhood of identity in $G$. For $\lambda$-almost every $x \in X$, and $\theta^B(x)$-almost every $gH \in G/H$, 
\[ \liminf_{\rho \to 0} \frac{\log \theta^B(x) (B(gH,\rho))}{\log \rho} = \DimT(\lambda,G/H) \text. \]
In other words, $\theta^B(x)$ has lower pointwise dimension equal to $\DimT(\lambda,G/H)$ almost everywhere.
\end{proposition}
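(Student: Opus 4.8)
The plan is to reduce everything to the behaviour of $\theta^B(x)$ at the identity coset $H \in G/H$ and to run the $\alpha_X$-equivariance argument exactly as in the proof of lemma \ref{lemma.def.dim}; the genuinely new feature is that the conditioning window $B$ is \emph{not} preserved by the dynamics, and handling this is where the work lies. First I would record the fundamental scaling relation. Since $\pi \circ \alpha_G = \alpha_{G/H} \circ \pi$ and $\alpha_{G/H}$ is a similitude of ratio $\alpha$, lemma \ref{lemma.alphaequiv} (which gives $\sigma(\alpha_X x) = (\alpha_G)_* \sigma(x)$) yields, for every $r>0$ and $\lambda$-almost every $x$,
\[ \theta^B(\alpha_X x)(B(H,r)) = \theta^{\alpha_G^{-1}B}(x)(B(H,r/\alpha)) \text. \]
This is a direct computation: one pushes $\sigma(\alpha_X x)=(\alpha_G)_*\sigma(x)$ through $\pi$ and through the definition of $\theta^B$, the dilation of the ball being absorbed by the similitude $\alpha_{G/H}$ and the dilation of the window turning $B$ into $\alpha_G^{-1}B$.

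The crux is to prove that the lower pointwise dimension of $\theta^B(x)$ at $H$, which I denote by $d_B(x)$, does not depend on the compact neighbourhood $B$. Here transitivity of disintegration (proposition \ref{prop.transitivity}) enters decisively: disintegrating $\sigma(x)$ along $\pi$, the conditional carried by the fibre $gH$ is the projective class of $\sigma_{H,\lambda}(gx)$. Consequently
\[ \sigma(x)\big(B \cap \pi^{-1}B(H,\rho)\big) = \int_{B(H,\rho)} m_g(B)\, \mathrm{d}\bar\sigma(x)(gH) \text, \]
where $\bar\sigma(x)$ is the base measure of this disintegration and $m_g(B)$ is the mass given by $\sigma_{H,\lambda}(gx)$ to the portion of the coset $gH$ lying in $B$. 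As $\rho \to 0$ the integrand concentrates on the fibre over the identity coset, so that comparing two windows $B,B'$ amounts to comparing the finite nonzero constants $\sigma_{H,\lambda}(x)(H\cap B)/\sigma(x)(B)$ and $\sigma_{H,\lambda}(x)(H\cap B')/\sigma(x)(B')$; hence the ratio $\theta^B(x)(B(H,\rho))/\theta^{B'}(x)(B(H,\rho))$ stays bounded away from $0$ and $\infty$, which forces $d_B(x)=d_{B'}(x)$. The delicate point — and the step I expect to be the main obstacle — is precisely this concentration, i.e. that $m_g(B) \to m_e(B)=\sigma_{H,\lambda}(x)(H\cap B)$ as $gH\to H$. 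This is a Lebesgue-differentiation statement for the disintegration along $\pi$, and since $G/H$ is doubling I would establish it through the density lemmas of \cite{KaRaSu} (controlling the relevant $\limsup$'s and squeezing the two lower dimensions between the same bounds); it is here that the regularity of the fibre conditionals must be handled with care.

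Once $B$-independence is in hand, constancy is immediate. Combining it with the scaling relation of the first paragraph — and noting that dividing the radius by the fixed factor $\alpha$ is an asymptotically neutral reparametrisation of the $\liminf$ defining $d_B$ — gives
\[ d_B(\alpha_X x)=d_{\alpha_G^{-1}B}(x)=d_B(x) \]
for $\lambda$-almost every $x$. Thus $x\mapsto d_B(x)$ is $\alpha_X$-invariant, hence $\lambda$-almost surely equal to a constant by ergodicity (hypothesis (4) of section \ref{section.dimension}); I denote it $\DimT(\lambda,G/H)$. It is nonnegative because $\theta^B(x)$ is a probability measure, and finite because $G/H$ is doubling, by the bound of \cite{KaRaSu} on the lower pointwise dimension of a measure on a doubling space.

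Finally I would pass from the identity coset to $\theta^B(x)$-almost every coset $gH$. Using the essential $G$-equivariance $\sigma(gx)=(R_g)_*\sigma(x)$ of proposition \ref{prop.benoistquint}, together with the fact that, $H$ being normal, $R_g$ descends to a right translation of $G/H$ under which the quotient metric is invariant, the germ of $\theta^B(x)$ at $gH$ is carried isometrically to the germ at $H$ of a conditioned push-forward of $\sigma(gx)$; invoking $B$-independence once more identifies its lower pointwise dimension with $d_B(gx)$. Since $gx$ lies in the relevant conull set for $\sigma(x)$-almost every $g$, hence for $\theta^B(x)$-almost every $gH$ — exactly as in the last paragraph of the proof of lemma \ref{lemma.def.dim} — this common value is the constant $\DimT(\lambda,G/H)$, which is the assertion of the proposition.
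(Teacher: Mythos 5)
Your skeleton (scaling relation, constancy at the identity coset via ergodicity, transport to a general coset $gH$ by right translation) matches the paper's, but the step you yourself flag as the main obstacle --- the $B$-independence of $d_B(x)$ via ``concentration'' of the fibre conditionals --- is a genuine gap, and it is precisely the step the paper's proof is designed to avoid. What you need is that $\theta^B(x)(B(H,\rho))/\theta^{B'}(x)(B(H,\rho))$ stays bounded away from $0$ as $\rho \to 0$ (the upper bound is trivial when $B \subset B'$), i.e.\ that $m_g(B) \to m_e(B)$ as $gH \to H$. This is a differentiation theorem for one measure against another along the fibration, not a density statement for a set: the doubling lemma of \cite{KaRaSu} quoted in the paper only controls a $\limsup$, and the paper explicitly warns that in a merely doubling space one does not get a genuine limit. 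Under the standing hypotheses of section \ref{section.dimension}, $G/H$ is only doubling --- the Besicovitch covering property is assumed only from section \ref{sect.LY} onward --- so the differentiation theorem (lemma \ref{lemma.classical.2}) is not available. Even granting Besicovitch, differentiation gives information at $\bar\sigma(x)$-almost every coset, whereas you need it at the \emph{specific} coset $H$ for $\lambda$-almost every $x$; transferring between the two requires the lacunary-section argument of lemmas \ref{lemma.technical.1} and \ref{lemma.technical.2}, which the paper proves later, under strictly stronger hypotheses (uniformly discrete stabilizers, Besicovitch on $G/H$), and at considerable cost. In effect you are assuming the hard half of section 4 in order to prove a proposition of section 3.

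The paper reverses your order of quantifiers and thereby needs no differentiation at all: it first proves constancy of $\delta_B$ for $B$ an open ball $B(e,R)$, using the inclusion $B \subset \alpha_G^{-1}B$ to turn the exact identity $\delta_B \circ \alpha_X = \delta_{\alpha_G^{-1}B}$ into the sub-invariance $\delta_B \circ \alpha_X \leq \delta_B$, whence $\delta_B \circ \alpha_X = \delta_B$ a.e.\ by $\alpha_X$-invariance of $\lambda$ and $\delta_B$ is a.e.\ constant by ergodicity. Only \emph{then} does it deduce $B$-independence, by feeding the constancy back into $\delta_B \circ \alpha_X = \delta_{\alpha_G^k B}$ to see that the constant is the same for all radii $\alpha^k R$, hence for all radii, hence (by sandwiching $B(e,R') \subset B \subset B(e,R'')$ and the monotonicity of $\delta_B$ in $B$) for every relatively compact neighbourhood of the identity. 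Your final translation step to a general coset $gH$ is fine and is exactly the paper's, but it consumes $B$-independence for the translated windows $Bg^{-1}$, so the argument cannot close until that independence is established by some route; you should adopt the dynamical one.
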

\begin{proof}
First, we prove that the lower dimension of $\theta^B(x)$ at the identity element of $G/H$ is constant $\lambda$-almost everywhere. In order to shorten notations we will write $B^T(\rho)$ for the set $\pi^{-1}(B(\pi(e),\rho))$ (where $\pi(e)=H$ is the identity element of $G/H$). Since the metric carried by $G$ is right-invariant, we have $B^T(\rho)=B(e,\rho)H$. Now let $\delta_B(x)$ be the lower dimension of $\theta^B(x)$ at the identity element, \emph{i.e.}
\[ \delta_B(x)=\liminf_{\rho \to 0}\frac{\log \theta^B(x)(B(\pi(e),\rho))}{\log \rho} = \liminf_{\rho \to 0} \frac{\log \sigma(x)(B \cap B^T(\rho))}{\log \rho} \text.\]

For $\lambda$-almost every $x$ we have (lemma \ref{lemma.alphaequiv})
\[ \delta_B(\alpha_X x)=\liminf_{\rho \to 0} \frac{\log \sigma(x)(\alpha_G^{-1} B \cap B^T(\alpha^{-1} \rho))}{\log \rho} \]
and the right side is obviously equal to $\delta_{\alpha_G^{-1} B}(x)$. Now assume for a moment that $B$ is an open ball $B(e,R)$; we then have $B \subset \alpha_G^{-1} B$, so that the relation $\delta_B(\alpha_X x)=\delta_{\alpha_G^{-1} B}x$ implies the relation $\delta_B(\alpha_X x) \leq \delta_B(x)$. By a straightforward application of Birkhov's ergodic theorem (bearing in mind the ergodicity of $\lambda$) we see that $\delta_B$ is almost surely constant, and the relation $\delta_B \circ \alpha_X = \delta_{\alpha_G^{-1} B}$ implies that the almost certain value of $\delta_B$ is equal to the almost certain value of $\delta_{\alpha_G^k B}$ for any integer $k$. Thus the almost certain value of $\delta_B$ does not depend on the radius $R$ if $B$ is the open ball $B(e,R)$; now if $B$ is any relatively compact neighbourhood of the identity, we can find radii $R'$ and $R''$ such that
\[ B(e,R') \subset B \subset B(e,R'') \]
and we get that $\delta_B$ is almost surely constant and that its almost certain value does not depend on $B$.

Denote by $\DimT(\lambda,G/H)$ the almost certain value of $\delta_B$ for any relatively compact neighbourhood $B$ of the identity. Now let $X'$ be a conull subset of $X$ such that $\delta_B(x)=\DimT(\lambda,G/H)$ for any relatively compact neighbourhood $B$ of the identity, and, furthermore, such that $\sigma(gx)=(R_g)_* \sigma(x)$ if $x$ and $gx$ belong to $X'$.

For any $x \in X'$ and any $g \in G$ such that $gx \in X'$, we have
\[ \diminf (\theta^B(x),\pi(g))=\liminf_{\rho \to 0} \frac{\log \sigma(x)(B \cap \pi^{-1}B(\pi(g),\rho))}{\log \rho} \text. \]
Also,  $\pi^{-1}(B(\pi(g),\rho))=B^T(\rho)g$ because $H$ is normal in $G$. The previous quantity is thus equal to
\[ \liminf_{\rho \to 0} \frac{\log \sigma(gx)(Bg^{-1} \cap B^T(\rho))}{\log \rho} \text. \]
Now if $g$ belongs to $B$, the set $Bg^{-1}$ is a relatively compact neighbourhood of the identity, so that we get
\[ \diminf(\theta^B(x),\pi(g))=\delta_{Bg^{-1}}(x) \]
and the right side is equal to $\DimT(\lambda,G/H)$. The proposition is proven.
\end{proof}

\begin{definition}
The quantity $\DimT(\lambda,G/H)$, whose existence was proven in the previous proposition, is called ``transverse to $H$ dimension of $\lambda$ along $G$''.
\end{definition}

\begin{proposition} \label{prop.subadd}
Under the previous hypotheses, the following holds:
\[ \Dim(\lambda,G) \geq \Dim(\lambda,H)+\DimT(\lambda,G/H) \text. \]
\end{proposition}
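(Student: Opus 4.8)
The plan is to combine the disintegration-transitivity machinery (Proposition~\ref{prop.transitivity}) with the super-additivity inequality (Proposition~\ref{prop.additivity.1}), applied to the conditioned measure $\nu^B(x)$ and the Lipschitz quotient map $\pi : G \to G/H$. The guiding idea is exactly the one sketched in the introduction: the dimension $\Dim(\lambda,G)$ of $\sigma(x)$ should decompose into a ``fibre'' contribution along the $H$-direction and a ``transverse'' contribution across $G/H$, but because dimension is only super-additive, we obtain an inequality rather than equality.

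\textbf{Step 1: set up the conditioned measure and its disintegration.} Fix a compact neighbourhood $B$ of the identity in $G$ and, for $\lambda$-almost every $x$, form the probability measure $\nu^B(x) = \sigma(x)|B / \sigma(x)(B)$. Since $B$ is a neighbourhood of the identity, $\nu^B(x)$ is exact-dimensional of dimension $\Dim(\lambda,G)$ at $\nu^B(x)$-almost every point (this is the content underlying Lemma~\ref{lemma.def.dim} together with right-invariance). Take $Y = \overline B$ (a complete doubling metric space, being a closed ball in the doubling group $G$), $Z = G/H$ with its quotient metric, $\mu = \nu^B(x)$, and $\pi$ the $1$-Lipschitz quotient map. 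The push-forward $\pi_* \nu^B(x)$ is precisely $\theta^B(x)$ from Proposition~\ref{prop.transverse.dimension}, so the base measure $\nu$ in Proposition~\ref{prop.additivity.1} has lower pointwise dimension equal to $\DimT(\lambda,G/H)$ almost everywhere; this supplies the constant $\delta(\cdot) \equiv \DimT(\lambda,G/H)$.

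\textbf{Step 2: identify the fibre dimension.} The key input is Proposition~\ref{prop.transitivity}: disintegrating $\nu^B(x)$ along $\pi$, the conditional measure above $\pi(g)$ is, for $\nu^B(x)$-almost every $g$, proportional to the restriction of $\sigma_H(gx)$ to a neighbourhood of the identity in $H$ (this is the same identification already invoked in the proof of Proposition~\ref{prop.monotonicity}). By Lemma~\ref{lemma.def.dim} applied to the group $H$, these fibre measures are almost surely exact-dimensional of dimension $\Dim(\lambda,H)$, so in particular their lower pointwise dimension is $\geq \Dim(\lambda,H)$, giving the constant $\gamma = \Dim(\lambda,H)$ required by Proposition~\ref{prop.additivity.1}.

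\textbf{Step 3: apply super-additivity and conclude.} With $\gamma = \Dim(\lambda,H)$ and $\delta \equiv \DimT(\lambda,G/H)$ verified, Proposition~\ref{prop.additivity.1} yields, for $\nu^B(x)$-almost every $g$,
\[ \dimsup(\nu^B(x),g) \geq \Dim(\lambda,H) + \DimT(\lambda,G/H) \text. \]
But $\nu^B(x)$ is exact-dimensional of dimension $\Dim(\lambda,G)$, so the left-hand side equals $\Dim(\lambda,G)$ almost surely, which gives the desired inequality. \textbf{The main obstacle} I anticipate is the careful bookkeeping in Step~2: matching the genuine fibre measures of the disintegration of $\nu^B(x)$ with the intrinsically-defined conditional measures $\sigma_H(gx)$ requires the transitivity statement to hold simultaneously on a single conull set, and one must check that restricting to $B$ (rather than working on all of $G$) does not corrupt the local dimension of the fibres near their support — this is why $B$ is taken to be a neighbourhood of the identity and why exact-dimensionality, not merely a one-sided bound, is the clean tool to invoke. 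The rest is a direct citation of the already-established super-additivity proposition.
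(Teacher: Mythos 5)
Your proof is correct and follows essentially the same route as the paper: the paper's own proof simply says to repeat the argument of Proposition~\ref{prop.monotonicity} with Proposition~\ref{prop.additivity.1} in place of Proposition~\ref{prop.additivity.2}, which is exactly what you have spelled out (identifying the fibres via Proposition~\ref{prop.transitivity} and the base via Proposition~\ref{prop.transverse.dimension}). Your use of exact-dimensionality of $\nu^B(x)$ to convert the $\dimsup$ conclusion into the stated inequality is also the intended step.
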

\begin{proof}
We just need to argue as in the proof of proposition \ref{prop.monotonicity}, applying proposition \ref{prop.additivity.1} in lieu of proposition \ref{prop.additivity.2}.
\end{proof}

\section{Ledrappier-Young formula}
This section is a translation of theorem C' in \cite{LY} into the language and setting of conditional measures along a group action.

\subsection{Two technical lemmata}
\subsubsection{Classical statements}
We first state two more-or-less classical lemmas, which we are going to generalize to the setting of conditional measures along a group operation. In proving the generalized results, we will use their ``classical'' counterparts.

I should mention that lemmas \ref{lemma.classical.1} and \ref{lemma.classical.2} are basic to the proof of theorem C' in (\emph{ibid.}). We are basically going to copy the arguments of Ledrappier and Young, in our language, simply replacing the ``classical'' lemmas with the ``generalized'' lemmas to follow .

Let $Y$ be some complete separable metric space satisfying Besicovitch covering property, and consider two Radon measures $\mu,\nu$. Assume, for simplicity, that $\mu$ is finite. For any $y \in Y$ and any radius $\rho > 0$, let 
\[ \phi_\rho(y) = \frac{\nu(B(y,\rho))}{\mu(B(y,\rho))} \in [0,+\infty] \]
and 
\[ \phi_*(y) = \inf_{\rho > 0} \phi_\rho(y) \text. \]

\begin{lemma} \label{lemma.classical.1}
The positive Borel function $- \log \phi_*$ is $\nu$-integrable, and 
\[ \int - \log \phi_* \mathrm{d} \nu \leq C(Y) \mu(Y) \]
where $C(Y)$ is the Besicovitch constant of $Y$.
\end{lemma}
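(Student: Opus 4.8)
The plan is to treat the statement as a weak-type maximal inequality attached to the Besicovitch covering property, and then to integrate it by means of the layer-cake formula. First I would put the integrand into a shape adapted to covering arguments. Since $\phi_*(y)=\inf_{\rho>0}\phi_\rho(y)$, we have
\[ -\log\phi_*(y)=\sup_{\rho>0}\log\frac{\mu(B(y,\rho))}{\nu(B(y,\rho))}, \]
and this is a Borel function (one may restrict the supremum to rational $\rho$, using that $\rho\mapsto\mu(B(y,\rho))$ and $\rho\mapsto\nu(B(y,\rho))$ are monotone, so measurability causes no trouble). Taking the positivity of $-\log\phi_*$ for granted — which is what lets the layer-cake identity be applied without separating off a negative part, and which holds in the setting where the lemma is used — the integral becomes
\[ \int_Y -\log\phi_*\,\mathrm{d}\nu=\int_0^\infty \nu(E_s)\,\mathrm{d}s,\qquad E_s:=\{\,y\in Y:\phi_*(y)<e^{-s}\,\}. \]
Everything then reduces to the single exponential estimate $\nu(E_s)\le C(Y)\,e^{-s}\,\mu(Y)$.

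To prove this estimate I would combine the defining property of $E_s$ with the covering theorem. For each $y\in E_s$ there is, by definition of the infimum, a radius $\rho_y>0$ with
\[ \nu(B(y,\rho_y))<e^{-s}\,\mu(B(y,\rho_y)). \]
The balls $\{B(y,\rho_y)\}_{y\in E_s}$ cover $E_s$, and the Besicovitch covering property furnishes a countable subfamily $(B_i)$ that still covers $E_s$ and has multiplicity at most the Besicovitch constant, that is $\sum_i\mathbf{1}_{B_i}\le C(Y)$ pointwise. Summing the displayed inequality over this subfamily and integrating the overlap bound against the finite measure $\mu$ gives
\[ \nu(E_s)\le\sum_i\nu(B_i)<e^{-s}\sum_i\mu(B_i)=e^{-s}\int_Y\Big(\sum_i\mathbf{1}_{B_i}\Big)\,\mathrm{d}\mu\le C(Y)\,e^{-s}\,\mu(Y). \]
Substituting into the layer-cake formula and using $\int_0^\infty e^{-s}\,\mathrm{d}s=1$ produces $\int_Y-\log\phi_*\,\mathrm{d}\nu\le C(Y)\,\mu(Y)$, which yields both the $\nu$-integrability and the quantitative bound at once.

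The step I expect to be the main obstacle is the legitimate application of the Besicovitch covering theorem, whose usual statements require the covering radii to be bounded, whereas the $\rho_y$ above are a priori arbitrary. I would circumvent this by an exhaustion: fix $R>0$, replace $E_s$ by $E_s^R=\{y:\exists\,\rho\le R,\ \nu(B(y,\rho))<e^{-s}\mu(B(y,\rho))\}$, carry out the covering argument using only balls of radius $\le R$ to obtain $\nu(E_s^R)\le C(Y)e^{-s}\mu(Y)$, and then let $R\to\infty$ via $E_s=\bigcup_{R>0}E_s^R$ and monotone convergence. Here the finiteness of $\mu$ is exactly what makes the right-hand side independent of $R$ and keeps $\sum_i\mu(B_i)$ finite; the remaining points — measurability of $s\mapsto\nu(E_s)$ and the Tonelli interchange underlying the layer-cake identity — are routine.
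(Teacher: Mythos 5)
Your proof is correct and follows essentially the same route as the paper's: the layer-cake identity reduces everything to the weak-type bound $\nu(E_t)\leq C(Y)e^{-t}\mu(Y)$, which is obtained by selecting, for each point of $E_t$, a ball witnessing $\phi_*(y)<e^{-t}$ and applying the Besicovitch covering property together with the finiteness of $\mu$. The only difference is cosmetic: where you run an exhaustion over radii $\rho\leq R$ to keep the covering theorem applicable, the paper simply takes the witnessing radii in $]0,1[$ from the outset.
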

\begin{proof}
Let $E_t$ be the set of all those $y$ such that $\phi_*(y) < e^{-t}$ ($t > 0$). If $y$ belongs to $E_t$, there is some radius $r(y)\in ]0,1[$ such that $\nu(B(y,r(y))) < e^{-t} \mu(B(y,r(y)))$. Let, by virtue of Besicovitch covering property, $A$ be a subset of $E_t$ such that the closed balls $B(y,r(y))$ ($y \in A$) cover $E_t$ with multiplicity less than $C(Y)$. We have
\[ \nu(E_t) \leq \sum_{y \in A} \nu(B(y,r(y))) \leq e^{-t} C(Y) \mu(Y) \text. \]
Now integrate
\[ \int_0^\infty \nu(E_t) \leq C(Y) \mu(Y) \int_0^\infty e^{-t} \mathrm{d}t \]
and the left side is equal to 
\[ \int - \log \phi_* \mathrm{d} \nu \]
by a classical application of Fubini theorem.
\end{proof}

\begin{lemma}[\cite{Mattila}, theorem 2.12] \label{lemma.classical.2}
If $\nu$ is absolutely continuous with respect to $\mu$, we have
\[ \lim_{\rho \to 0} \phi_\rho(y) = \frac{\mathrm{d} \nu}{\mathrm{d} \mu}(y) \]
for $\mu$-almost every $y$.
\end{lemma}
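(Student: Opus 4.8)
The plan is to prove this as a Besicovitch-type differentiation theorem. Write $f = \frac{d\nu}{d\mu}$, so that $\nu = f\mu$ (this makes sense because $\nu \ll \mu$), and set
\[ \overline{D}(y) = \limsup_{\rho \to 0} \phi_\rho(y), \quad \underline{D}(y) = \liminf_{\rho \to 0} \phi_\rho(y). \]
The goal is to show that $\overline{D}(y) = \underline{D}(y) = f(y)$ for $\mu$-almost every $y$. Since the conclusion is $\mu$-almost everywhere and both measures are Radon, I may restrict attention to one bounded Borel set at a time, so that all quantities below are finite.

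The first and central step is to upgrade the Besicovitch covering property into a Vitali-type covering statement for the measure $\mu$: if $A \subset Y$ is bounded and $\mathcal{F}$ is a family of closed balls such that every point of $A$ is the centre of balls in $\mathcal{F}$ of arbitrarily small radius (a \emph{fine} cover), then $\mathcal{F}$ contains a countable disjoint subfamily whose union covers $\mu$-almost all of $A$. This is the one place where we genuinely need the Besicovitch covering property rather than mere doubling, and I expect it to be the main technical obstacle; the argument is the classical one, exhausting $A$ by disjoint balls on which a fixed positive proportion of the remaining $\mu$-mass is captured at each stage.

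With this in hand I establish two density estimates. Fix $t > 0$ and a bounded Borel set $A$. If $\overline{D}(y) \ge t$ for every $y \in A$, then for each open $U \supset A$ the balls $B(y,\rho) \subset U$ with $\nu(B(y,\rho)) > (t-\varepsilon)\mu(B(y,\rho))$ form a fine cover of $A$; extracting a disjoint subfamily covering $\mu$-almost all of $A$ and summing gives $(t-\varepsilon)\mu(A) \le \nu(U)$, and outer regularity of $\nu$ together with $\varepsilon \to 0$ yields $\nu(A) \ge t\,\mu(A)$. Symmetrically, if $\underline{D}(y) \le s$ on $A$, then $\nu(A) \le s\,\mu(A)$.

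The two estimates finish the proof. Applying them to $E_{a,b} = \{y : \underline{D}(y) < a < b < \overline{D}(y)\}$ for rationals $a < b$ gives $b\,\mu(E_{a,b}) \le \nu(E_{a,b}) \le a\,\mu(E_{a,b})$, hence $\mu(E_{a,b}) = 0$; taking the union over all such pairs shows that $\lim_{\rho \to 0}\phi_\rho(y)$ exists $\mu$-almost everywhere, with common value $D(y)$. Finally, to identify $D$ with $f$, use $\nu = f\mu$: on the set $A = \{D > t\} \cap \{f < s\}$ with $s < t$ the estimates give $t\,\mu(A) \le \nu(A) = \int_A f\,d\mu < s\,\mu(A)$, forcing $\mu(A) = 0$; the union over rationals $s < t$ shows $D \le f$ almost everywhere, and the reverse inequality follows symmetrically. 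Hence $\lim_{\rho \to 0}\phi_\rho(y) = f(y) = \frac{d\nu}{d\mu}(y)$ for $\mu$-almost every $y$.
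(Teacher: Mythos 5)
The paper does not actually prove this lemma; it simply cites Theorem 2.12 of Mattila's book, and your argument --- upgrading the Besicovitch covering property to a Vitali-type covering theorem, deriving the two density estimates, and then identifying the almost-everywhere limit with the Radon--Nikodym derivative --- is precisely the standard proof given in that reference. It is correct; the only points worth double-checking in a full write-up are the usual ones (the upper estimate for $\nu(A)$ needs the disjoint subfamily to cover $\nu$-almost all of $A$, which here follows from $\nu\ll\mu$, and the sets $E_{a,b}$ should be handled via outer measures to sidestep measurability).
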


\subsubsection{Generalized statements}
We begin with the notations. Let $G$ be a locally compact second countable topological group acting in a Borel way on a standard Borel space $X$ with \emph{uniformly discrete stabilizers}. Let $H$ be a closed normal subgroup of $G$. We assume that the (metrizable) quotient group $G/H$ is endowed with a compatible metric $d$, right invariant and \emph{proper}, which means that any closed ball is a compact set. Note that we do not need to endow $G$ (nor $H$) with any metric. We denote by $B(gH,\rho)$ the open ball of radius $\rho$ in $G/H$. Let $\pi$ be the quotient map $G \to G/H$.

We make the assumption that the metric space $G/H$ satisfies the \emph{Besicovitch covering property} (see, \emph{e.g.}, \cite{Mattila}).

Now let $\lambda$ be some Borel probability measure on $X$. We disintegrate $\lambda$ along $G$ and $H$, thus obtaining maps
\[ \sigma_G : X \to \mathcal M_{\mathrm{r}}^1(G),\quad \sigma_H : X \to \mathcal M_{\mathrm{r}}^1(H) \text. \]

Fix some compact neighbourhood $A \subset B$ of the identity in $G$. For any $x \in X$ and $\rho > 0$ we let
\[ \phi_\rho(x) = \frac{\pi_* (\sigma(x)|A)(B(H,\rho))}{\pi_*(\sigma(x)|B)(B(H,\rho))} \text. \]
To shorten notations, we denote by $B^T_\rho(g)$ ($g \in G$)  the inverse image $\pi^{-1}(B(\pi(g),\rho))$, so that
\[ \phi_\rho(x) = \frac{\sigma(x)(B^T_\rho(e) \cap A)}{\sigma(x)(B^T_\rho(e) \cap B)} \text. \]
Let also
\[ \phi_*(x) = \inf_{\rho > 0} \phi_\rho(x) \text. \]

We know state:
\begin{lemma} \label{lemma.technical.1}
The integral
\[ \int - \log \phi_*(x) \mathrm{d} \lambda(x) \]
is finite.
\end{lemma}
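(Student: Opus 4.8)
The plan is to reduce the statement to its classical counterpart, Lemma \ref{lemma.classical.1}, applied on the space $Y=G/H$ (which carries the Besicovitch covering property by hypothesis), by disintegrating $\lambda$ along $G$ and exploiting the essential $G$-equivariance of $\sigma=\sigma_{G,\lambda}$ furnished by Proposition \ref{prop.benoistquint}. First I would record the effect of equivariance on $\phi_\rho$. Using $\sigma(gx)=(R_g)_*\sigma(x)$ together with the identity $B^T_\rho(e)\,g=B^T_\rho(g)$ (which follows from the normality of $H$, making $G/H$ a group, and the right-invariance of the metric on $G/H$), a direct computation gives, for a.e.\ $x'$ and every $g$,
\[ \phi_\rho(gx')=\frac{\sigma(x')(Ag\cap B^T_\rho(g))}{\sigma(x')(Bg\cap B^T_\rho(g))} \text. \]
Since this is a ratio, the projective ambiguity in $\sigma(x')$ is harmless, and I may work with the representative $\sigma_\Sigma(x')$ attached to a complete lacunary section.

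Next I would unfold the integral over $X$ through such a section. Fix a compact symmetric neighbourhood $U$ of the identity, small enough that $U^4\subseteq A$, and let $\Sigma$ be a $U$-lacunary, $U^2$-complete section (theorem of Kechris). The fibres of $a_\Sigma$ are countable and, by completeness, each meets $U^2\times\Sigma$, so a Lusin--Novikov selection yields a Borel transversal $D\subseteq U^2\times\Sigma$ meeting every fibre exactly once; then $a_\Sigma|_D$ pushes $a_\Sigma^*\lambda|_D$ onto $\lambda$, whence
\[ \int_X -\log\phi_*\,d\lambda=\int_\Sigma d\lambda_\Sigma(x')\int_{D_{x'}} -\log\phi_*(gx')\,d\sigma_\Sigma(x')(g) \text, \]
with $D_{x'}\subseteq U^2$. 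It now suffices to bound the inner integral, for fixed $x'$, by $C\,\sigma_\Sigma(x')(BU^2)$ with $C$ depending only on the Besicovitch constant of $G/H$: integrating over $\Sigma$ then gives $C\,a_\Sigma^*\lambda(BU^2\times\Sigma)$, which is finite by Lemma \ref{lemma.finiteness} since $BU^2$ is compact.

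To obtain the fibrewise bound I would sandwich the translated windows between fixed ones. Write $\sigma=\sigma_\Sigma(x')$, $A_0=U^2$ and $B_1=BU^2$. For $g\in U^2$ one has $A_0\subseteq Ag$ (because $A_0g^{-1}\subseteq U^4\subseteq A$) and $Bg\subseteq B_1$, so for every $\rho$
\[ \phi_\rho(gx')\geq\frac{\sigma(A_0\cap B^T_\rho(g))}{\sigma(B_1\cap B^T_\rho(g))}=\frac{\nu_0(B(\pi(g),\rho))}{\mu_1(B(\pi(g),\rho))} \text, \]
where $\nu_0=\pi_*(\sigma|A_0)$ and $\mu_1=\pi_*(\sigma|B_1)$ are finite measures on $G/H$ with $\nu_0\leq\mu_1$. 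Taking the infimum over $\rho$ gives $\phi_*(gx')\geq\Phi^0_*(\pi(g))$, where $\Phi^0_*$ is exactly the function of Lemma \ref{lemma.classical.1} for the pair $(\nu_0,\mu_1)$. Since $D_{x'}\subseteq U^2=A_0$ we have $\pi_*(\sigma|D_{x'})\leq\nu_0$, so using $-\log\Phi^0_*\geq0$,
\[ \int_{D_{x'}} -\log\phi_*(gx')\,d\sigma(g)\leq\int_{G/H} -\log\Phi^0_*\,d\nu_0\leq C(G/H)\,\mu_1(G/H)=C(G/H)\,\sigma(B_1) \text, \]
the middle inequality being precisely Lemma \ref{lemma.classical.1}. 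This yields the desired bound and completes the argument.

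The main obstacle is exactly the appearance of the \emph{translated} windows $Ag$ and $Bg$ in the equivariance identity: unlike in the classical setting the comparison windows move with the base point, so a naive identification of $\phi_*(gx')$ with a classical $\phi_*$ evaluated at $\pi(g)$ fails. The device that overcomes it is to choose the section fine enough ($U^4\subseteq A$) that the transversal lives in the small window $U^2\subseteq A$, and then to trap the moving windows from inside and outside by the fixed neighbourhoods $A_0=U^2$ and $B_1=BU^2$. The only cost is that Lemma \ref{lemma.classical.1} is applied to $(\nu_0,\mu_1)$ rather than to $(\pi_*(\sigma|A),\pi_*(\sigma|B))$, i.e.\ a harmless enlargement of the denominator window, which remains controlled by Lemma \ref{lemma.finiteness}.
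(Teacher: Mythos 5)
Your proof is correct and follows essentially the same route as the paper: unfold the integral through a $U$-lacunary, $U^2$-complete section with $U^4\subseteq A$, trap the translated windows $Ag$ and $Bg$ between the fixed sets $U^2$ and $BU^2$ for $g\in U^2$, apply Lemma \ref{lemma.classical.1} on $G/H$ to the resulting pair of push-forward measures, and conclude with Lemma \ref{lemma.finiteness}. The only (harmless) deviations are cosmetic: you invoke a Lusin--Novikov transversal to get an exact identity where the paper settles for the one-line inequality $\int -\log\phi_*\,d\lambda\leq\int \mathbf{1}_{U^2\times\Sigma}(-\log\phi_*\circ a)\,d(a^*\lambda)$, and you make the identification with the classical $\phi_*$ on $G/H$ more explicit.
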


\begin{lemma} \label{lemma.technical.2}
 For $\lambda$-almost every $x$, we have
\[ \lim_{\rho \to 0} \phi_\rho(x) = \frac{\sigma_H(x)(A)}{\sigma_H(x)(B)} \text. \]
\end{lemma}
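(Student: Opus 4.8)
The plan is to recognize Lemma~\ref{lemma.technical.2} as the ``generalized'' counterpart of the classical differentiation statement in Lemma~\ref{lemma.classical.2}, and to reduce it to the latter by working on the quotient $G/H$, where the Besicovitch property is available. Fix $x$ and set, on $G/H$,
\[ \nu_x^A = \pi_*(\sigma(x)|A),\quad \nu_x^B = \pi_*(\sigma(x)|B)\text. \]
Since $A \subset B$ we have $\sigma(x)|A \leq \sigma(x)|B$, hence $\nu_x^A \leq \nu_x^B$, so $\nu_x^A$ is absolutely continuous with respect to $\nu_x^B$; moreover $\nu_x^B$ is finite because $\sigma(x)$ is Radon (proposition~\ref{prop.benoistquint}, as $\lambda$ is a probability measure) and $B$ is compact. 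Unwinding the definitions, one checks that $\phi_\rho(x)$ is \emph{exactly} the classical quotient $\nu_x^A(B(\pi(e),\rho))/\nu_x^B(B(\pi(e),\rho))$ evaluated at the distinguished centre $\pi(e) = H$. Thus Lemma~\ref{lemma.classical.2}, applied to the pair $(\nu_x^B,\nu_x^A)$ on $G/H$, yields for $\nu_x^B$-almost every centre $\xi$ the convergence of the quotient to $\tfrac{\mathrm{d}\nu_x^A}{\mathrm{d}\nu_x^B}(\xi)$.

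Next I would identify the target value. Disintegrating $\sigma(x)$ along $\pi$ with fibre conditionals $\kappa_\xi$, the Radon--Nikodym derivative is $\tfrac{\mathrm{d}\nu_x^A}{\mathrm{d}\nu_x^B}(\xi) = \kappa_\xi(A)/\kappa_\xi(B)$ for $\nu_x^B$-almost every $\xi$. By transitivity of disintegration (proposition~\ref{prop.transitivity}, taking $g = e$), the conditional of $\sigma(x)$ on the fibre $\pi^{-1}(\pi(e)) = H$ is precisely the projective class of $\sigma_H(x)$. Consequently
\[ \frac{\mathrm{d}\nu_x^A}{\mathrm{d}\nu_x^B}(\pi(e)) = \frac{\sigma_H(x)(A)}{\sigma_H(x)(B)}\text, \]
which is exactly the claimed limit. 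So everything comes down to showing that the \emph{fixed} centre $\pi(e)$ is, for $\lambda$-almost every $x$, a ``good'' centre at which the classical conclusion holds — a single point need not lie in the $\nu_x^B$-conull set provided by Lemma~\ref{lemma.classical.2}.

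This transfer is the main obstacle, and here is where the $G$-equivariance and a Fubini argument (not the dynamics, which is unavailable in this subsection) must enter. I would fix a complete lacunary section $\Sigma$ and work with the $\sigma$-finite measure $a_\Sigma^*\lambda$ on $G\times\Sigma$, whose fibre conditionals represent $[\sigma(x')]$. Pushing $a_\Sigma^*\lambda$ forward by $(g,x')\mapsto(\pi(g),x')$ and disintegrating, one sees that for $a_\Sigma^*\lambda$-almost every $(g,x')$ the point $\pi(g)$ is a \emph{generic} (Lebesgue) centre for the transverse measure attached to $x'$, so the classical conclusion holds there. Using the equivariance $\sigma(gx')=(R_g)_*\sigma(x')$ together with right-invariance of the metric on $G/H$ (which gives $B^T_\rho(e)g = B^T_\rho(g)$), one then transports this genericity at $\pi(g)$ for $x'$ back to genericity at the basepoint $\pi(e)$ for $x = gx'$, thereby covering $\lambda$-almost every $x$ (the decomposition $x = gx'$ exists for a.e.\ $x$ since $\Sigma$ is complete and stabilizers are uniformly discrete, cf.\ lemma~\ref{lemma.finiteness}).

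The genuine difficulty in the last step is that the equivariance moves the test neighbourhoods along with the centre: translating the basepoint $\pi(e)$ to $\pi(g)$ replaces $(A,B)$ by $(Ag,Bg)$, so what the clean Fubini directly delivers is genericity for $g$-dependent neighbourhoods rather than for the fixed pair $(A,B)$. I would resolve this exactly as in Ledrappier--Young: run the argument simultaneously over a countable family of translated neighbourhood pairs $(Ad,Bd)$ with $d$ in a countable dense subset of $G$, intersect the resulting conull sets, and then pass to the desired $g$ by approximation $d_n \to g$, using that $A,B$ are compact, that $Ad_n \to Ag$ in the Hausdorff sense, and the inner/outer regularity of the Radon measures $\sigma(x')$ to control the boundary contributions in the limit $\rho \to 0$. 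Once genericity at $\pi(e)$ is secured for $\lambda$-almost every $x$, the convergence $\phi_\rho(x)\to \sigma_H(x)(A)/\sigma_H(x)(B)$ follows, completing the proof.
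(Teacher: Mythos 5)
Your overall architecture matches the paper's proof closely: reduce to Lemma~\ref{lemma.classical.2} applied on $G/H$ to the push-forwards $\pi_*(\sigma(\cdot)|A)$ and $\pi_*(\sigma(\cdot)|B)$, identify the limiting ratio via Proposition~\ref{prop.transitivity}, and transfer genericity from $\nu_x^B$-almost every centre to the fixed centre $\pi(e)$ by a Fubini argument over a lacunary section combined with the equivariance $\sigma(gx')=(R_g)_*\sigma_\Sigma(x')$. You also correctly isolate the real difficulty, namely that the equivariance turns the fixed pair $(A,B)$ into the $g$-dependent pair $(Ag,Bg)$.

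The one place where your proposed mechanism does not work as stated is the final approximation step. Knowing the convergence for a countable dense family of translates $(Ad,Bd)$ and then letting $d_n\to g$ requires interchanging the limit in $n$ with the limit $\rho\to 0$, and inner/outer regularity of $\sigma(x')$ is not the right tool for this: regularity controls $\sigma(x')(Ad_n\,\triangle\,Ag)$ as a fraction of the \emph{total} mass, but says nothing about the fraction of the mass of the shrinking transverse slab $B^T_\rho(g)$ carried by $Ad_n\,\triangle\,Ag$, which is what actually matters and which could a priori stay bounded away from $0$ as $\rho\to 0$ for each fixed $n$. The correct fix — and this is what the paper does — is a monotone sandwich by \emph{fixed} sets: for $g$ in the $U^2$-neighbourhood attached to a $U$-lacunary section with $U=B(e,r)$, one has $A_{-r}\subset Ag\subset A_{+r}$ with $A_{-r}=\bigcap_{u\in U^2}Au$ and $A_{+r}=AU^2$ (similarly for $B$), so the quotient $\sigma_\Sigma(x')(Ag\cap B^T_\rho(g))/\sigma_\Sigma(x')(Bg\cap B^T_\rho(g))$ is squeezed between two quotients involving only the two fixed pairs $(A_{-r},B_{+r})$ and $(A_{+r},B_{-r})$; one takes $\rho\to0$ for these fixed pairs using step one, and only \emph{then} lets $r\to0$, where the relevant regularity is that of the fibre conditional $\sigma_H(x)$ (applied to $A_{\pm r}\cap H\to A\cap H$), not that of $\sigma(x')$. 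Since in the paper the section itself depends on $r$, an additional uniform bound on the multiplicity of $(g,x')\mapsto gx'$ over $U^2\times\Sigma$ is needed to convert the a.e. estimate $\theta(gx')\le\theta'_r(gx')$ into $\lambda\{\theta\ge\varepsilon\}\le K\lambda\{\theta'_r\ge\varepsilon\}\to0$; your variant with a single section and shrinking $\varepsilon$-enlargements would avoid that bound, but it must still incorporate the sandwich, because without it the limit interchange has no justification.
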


\subsubsection{Proof of lemma \ref{lemma.technical.1}}
Fix some compact symmetric neighbourhood of identity $\Delta$ in $G$ such that $\Delta^4 \subset A$. Let $\Sigma$ be a $\Delta$-lacunary, $\Delta^2$-complete section to the operation of $G$ on $X$. Denote by $a$ the mapping $G \times \Sigma$, $a(g,x')=gx'$. Choose a pseudo-image $\lambda_\Sigma$ of $a^* \lambda$, and remember the notation $\sigma_\Sigma$ (\emph{cf supra}, subsection 2.1). We have
\[ \begin{split} \int -\log \phi_* \mathrm{d} \lambda & \leq \int d(a^* \lambda)(g,x') \mathbf{1}_{\Delta^2}(g) (- \log \phi_* (gx')) \\ & =\int d\lambda_\Sigma(x') \int_{\Delta^2} \mathrm{d}(\sigma_\Sigma(x'))(g) (-\log \phi_*(gx')) \text.
\end{split} \]
For $\lambda_\Sigma$-almost every $x'$ and $\sigma_\Sigma(x')$-almost every $g$, we have $\sigma(gx')=[(R_g)_* \sigma_\Sigma(x')]$. Fix such an $x'$. By Fubini theorem, using the fact that $\phi_*$ is less than $1$, we have
\[ \int_{\Delta^2} - \log \phi_*(gx') \mathrm{d}(\sigma_\Sigma(x'))(g) = \int_0^\infty \sigma_\Sigma(x') \left\{ g \in \Delta^2\ ;\ \phi_*(gx') < e^{-t} \right\} \mathrm{d}t \text. \]

Now let 
\[ \tilde \phi_\rho(x',g) = \frac{\sigma_\Sigma(x')(\Delta^2 \cap B_\rho^T(g))}{\sigma_\Sigma(x')(B \Delta^2 \cap B_\rho^T(g))} \]
and
\[ \tilde \phi_* (x',g) = \inf_{\rho >0} \tilde \phi_\rho(x',g)\text{.} \]
I claim that
\[ \tilde \phi_*(x',g) \leq \phi_*(gx') \]
for $\lambda_\Sigma$-almost every $x'$ and $\sigma_\Sigma(x')$-almost every $g \in \Delta^2$. Indeed, if $g \in \Delta^2$ is such that $\sigma(gx')=[(R_g)_* \sigma_\Sigma(x')]$, we have
\[ \begin{split} \phi_\rho(gx') &= \frac{\sigma(gx')(A \cap B_\rho^T(e))}{\sigma(gx')(B \cap B_\rho^T(e))} \\
& = \frac{\sigma_\Sigma(x')(Ag \cap B_\rho^T(g))}{\sigma_\Sigma(x')(Bg \cap B^T_\rho(g))} \\ & \geq \frac{\sigma_\Sigma(x')(\Delta^2 \cap B_\rho^T(g))}{\sigma_\Sigma(x')(B \Delta^2 \cap B_\rho^T(g))=\tilde \phi_\rho(g,x')}
\end{split}
\]
since $\Delta^2 \subset Ag$ and $Bg \subset B \Delta^2$ for any $g \in \Delta^2$. The  claim follows.

Now apply lemma \ref{lemma.classical.1} to get, for $\lambda_\Sigma$-almost every $x'$,
\[ \int_{\Delta^2} \mathrm{d} \sigma_\Sigma(x')(g) (-\log \tilde{\phi}_*(x',g)) \leq C(G/H) \sigma_\Sigma(x')(B \Delta^2) \]
where $C(G/H)$ is the Besicovitch constant of $G/H$. We need only integrate (with respect to $\mathrm{d} \lambda_\Sigma(x')$) to obtain
\[ \int - \log \phi_* \mathrm{d} \lambda \leq C(G/H) \int \sigma_\Sigma(x')(B \Delta^2) \mathrm{d} \lambda_\Sigma(x') \]
and the right side is finite by virtue of lemma \ref{lemma.finiteness}.

\subsubsection{Proof of lemma \ref{lemma.technical.2}}
\begin{enumerate}
\item Let $A' \subset B'$ be two relatively compact neighbourhood of the identity in $G$. Let $\Sigma$ be a lacunary section to the operation of $G$ on $X$. Recall the notation $\sigma_\Sigma$ (\emph{cf supra}). For $\lambda$-almost every $x'$ and $\sigma_\Sigma(x')$-almost every $g \in A'$, we have
\[ \lim_{\rho \to 0} \frac{\sigma_\Sigma(x')(A' \cap B_\rho^T(g))}{\sigma_\Sigma(x')(B' \cap B_\rho^T(g))} = \frac{\sigma_H(gx')(A' \cap H)}{\sigma_H(gx')(B' \cap H)} \text. \]
Indeed, this is a straightforward consequence of lemma \ref{lemma.classical.2} and proposition \ref{prop.transitivity}.
\item Now let, for $x$ in $X$, 
\[ \theta(x) = \limsup_{\rho \to 0} \left| \frac{\sigma(x)(A\cap B_\rho^T)}{\sigma(x)(B \cap B_\rho^T)} - \frac{\sigma_H(x)(A \cap H)}{\sigma_H(x)(B \cap H)} \right| \text. \]
We are going to show that for any $\varepsilon > 0$, the set of all $x$ such that $\theta(x) \geq \varepsilon$ is a null set (with respect to $\lambda$).
\item Fix some radius $r>0$, small enough in a way we will make precise soon. In order to shorten notations we denote by $U$ the open ball $B(e,r)$; $U$ is a relatively compact symmetric open neighbourhood of the identity. Let $\Sigma$ be a $U$-lacunary, $U^2$-complete section to the operation of $G$ on $X$. For almost every $x' \in \Sigma$ and $\sigma_\Sigma(x')$-almost every $g$, we have $\sigma(gx')=[(R_g)_* \sigma_\Sigma(x')]$; hence, for any $\rho>0$, 
\[ \frac{\sigma(gx')(A \cap B_\rho^T)}{\sigma(gx')(B \cap B_\rho^T)} = \frac{\sigma_\Sigma(x')(Ag \cap B_\rho^T(g))}{\sigma_\Sigma(x')(Bg \cap B_\rho^T(g))} \text. \]

We assume $r$ is small enough so that the sets
\[ A_{- r} = \bigcap_{g \in U^2} Ag,\quad B_{-r} = \bigcap_{g \in U^2} Bg \]
are neighbourhood of the identity. Let also
\[ A_{+r}=AU^2,\quad B_{+r}=BU^2 \]
and note that these sets are relatively compact.
\item For almost any $x' \in \Sigma$ and $\sigma_\Sigma(x')$-almost any $g \in U^2$, we have
\[ \frac{\sigma_{\Sigma}(x')(A_{-r} \cap B_\rho^T(g))}{\sigma_\Sigma(x')(B_{+r} \cap B_\rho^T(g))} 
\leq \frac{\sigma(gx')(A \cap B_\rho^T)}{\sigma(gx')(B \cap B_\rho^T)} \leq \frac{\sigma_{\Sigma}(x')(A_{+r} \cap B_\rho^T(g))}{\sigma_\Sigma(x')(B_{-r} \cap B_\rho^T(g))} \text. \]
By virtue of 1. \emph{supra}, we get
\begin{align*} & \frac{\sigma_H(gx')(A_{-r} \cap H)}{\sigma_H(gx')(B_{+r} \cap H)}  \leq \liminf_{\rho \to 0} \frac{\sigma(gx')(A \cap B_\rho^T)}{\sigma(gx')(B \cap B_\rho^T)} \\
\leq{} & \limsup_{\rho \to 0} \frac{\sigma(gx')(A \cap B_\rho^T)}{\sigma(gx')(B \cap B_\rho^T)} \leq \frac{\sigma_H(gx')(A_{+r} \cap H)}{\sigma_H(gx')(B_{-r} \cap H)} \text.
\end{align*}
\item Let
\[ \theta_r'(x) = \left| \frac{\sigma_H(x)(A \cap H)}{\sigma_H(x)(B \cap H)} - \frac{\sigma_H(x)(A_{-r} \cap H)}{\sigma_H(x)(B_{+r} \cap H)} \right| + \left| \frac{\sigma_H(x)(A \cap H)}{\sigma_H(x)(B \cap H)}-\frac{\sigma_H(x)(A_{+r} \cap H)}{\sigma_H(x)(B_{-r} \cap H)} \right| \text. \]
According to 4. we know that for almost every $x' \in \Sigma$ and $\sigma_\Sigma(x')$-almost every $g \in U^2$, we have $\theta(gx') \leq \theta_r'(gx')$. Since $\Sigma$ is $U^2$-complete, we obtain
\begin{align*} \lambda \{ x \in X \ ;\ \theta(x) \geq \varepsilon \} & \leq a^* \lambda \{ (g,x') \in U^2 \times \Sigma \ ;\ \theta(gx')  \geq \varepsilon \} \\
& \leq a^* \lambda \{ (g,x') \in U^2 \times \Sigma\ ;\ \theta_r'(gx') \geq \varepsilon \} \\
& = \int_{\theta_r' \geq \varepsilon} d\lambda(x) \kappa(x)
\end{align*}
where $\kappa(x)$ is the number of all $(g,x') \in U^2 \times \Sigma$ such that $gx'=x$. This number is bounded uniformly in $x$ by some constant $K$ independent of $r$ small enough; we skip the proof of this easy fact, which is a consequence of the facts that the stabilizers are uniformly discrete and that $G$ is a doubling group.

All in all, we have
\[ \lambda\{ x\ ;\ \theta(x) \geq \varepsilon \} \leq K\lambda\{ x\ ;\ \theta_r'(x) \geq \varepsilon \} \]
for any $r>0$ small enough. Now $\sigma_H(x)$ is (almost surely) a Radon measure, and $A,B$ are relatively compact open sets; whence 
\[ \lim_{r \to 0} \theta'_r(x)=0 \]
almost surely. The lemma is thus proven.
\end{enumerate}

\subsection{Additivity of dimension} \label{sect.LY}
We keep the notations and hypotheses set at the beginning of section \ref{section.dimension}. We also consider a closed normal subgroup $H$ of $G$ that is $\alpha_G$-invariant, \emph{i.e.} $\alpha_G H=H$. Recall that the quotient group $G/H$ is endowed with a natural metric, see section \ref{subsection.monotonicity}. We denote by $\pi$ the quotient map $G \to G/H$. Now we make two supplementary hypotheses:
\begin{enumerate}
\item The operation of $G$ on $X$ has uniformly discrete stabilizers.
\item The metric space $G/H$ satisfies the Besicovitch covering property.
\end{enumerate}

Under these hypotheses, we now state the main result of this section.
\begin{theorem} \label{th.additivity}
The following equality holds:
\[ \Dim(\lambda,G)=\Dim(\lambda,H)+\DimT(\lambda,G/H) \text. \]
\end{theorem}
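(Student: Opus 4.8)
The plan is to deduce the equality from the already-established super-additivity (proposition \ref{prop.subadd}), which gives $\Dim(\lambda,G) \geq \Dim(\lambda,H)+\DimT(\lambda,G/H)$, by proving the reverse inequality. The whole argument is a transcription of the proof of theorem C' in \cite{LY}: we reproduce the Ledrappier--Young scheme verbatim, substituting the generalized lemmas \ref{lemma.technical.1} and \ref{lemma.technical.2} for the classical lemmas \ref{lemma.classical.1} and \ref{lemma.classical.2}. By the equivariance statement in proposition \ref{prop.benoistquint} and right-invariance of the metric, it suffices to control $\sigma(x)(B(e,\rho))$ as $\rho \to 0$ at the identity. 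Writing $B=B(e,1)$ and $A=\alpha_G B = B(e,\alpha) \subset B$, and using that $\pi$ is $1$-Lipschitz (so $B(e,\rho) \subset B_\rho^T(e)$), I would decompose
\[ \frac{\log \sigma(x)(B(e,\rho))}{\log \rho} = \frac{\log Q_\rho(x)}{\log \rho} + \frac{\log\bigl[\sigma(x)(B)\,\theta^B(x)(B(\pi(e),\rho))\bigr]}{\log \rho}, \qquad Q_\rho(x) := \frac{\sigma(x)(B(e,\rho))}{\sigma(x)(B \cap B_\rho^T(e))}. \]
By proposition \ref{prop.transverse.dimension}, along the geometric scale $\rho=\alpha^n$ the second (transverse) term converges to $\DimT(\lambda,G/H)$. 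Since $\sigma(x)$ is exact-dimensional along $G$ with value $\Dim(\lambda,G)$ (lemma \ref{lemma.def.dim}), the entire matter reduces to showing that the fiber term satisfies $\log Q_{\alpha^n}(x)/(n\log\alpha) \to \Dim(\lambda,H)$.

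To compute the fiber term I would exploit self-similarity. Using lemma \ref{lemma.alphaequiv} in the form $\sigma(x)(\alpha_G^k S)=\sigma(\alpha_X^{-k}x)(S)$ together with $B_\rho^T(e)=\alpha_G^k B_{\alpha^{-k}\rho}^T(e)$, each factor of the telescoping product
\[ \frac{\sigma(x)(\alpha_G^n B \cap B_\rho^T(e))}{\sigma(x)(B \cap B_\rho^T(e))} = \prod_{k=0}^{n-1} \frac{\sigma(\alpha_X^{-k}x)(\alpha_G B \cap B_{\alpha^{-k}\rho}^T(e))}{\sigma(\alpha_X^{-k}x)(B \cap B_{\alpha^{-k}\rho}^T(e))} \]
is of the form $\phi_{\alpha^{-k}\rho}(\alpha_X^{-k}x)$ in the notation preceding lemma \ref{lemma.technical.2}. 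By lemma \ref{lemma.technical.2} each factor converges, as its transverse scale tends to $0$, to $e^{J_H(\alpha_X^{-k}x)}$ where $J_H(y)=\log\bigl(\sigma_H(y)(\alpha_H(B\cap H))/\sigma_H(y)(B\cap H)\bigr)$. The formula \eqref{eq.dimension}, applied to $H$ with $B_H=B\cap H$, identifies $\int_X J_H\,\mathrm{d}\lambda = \Dim(\lambda,H)\log\alpha$. Thus the logarithm of the product is asymptotically the Birkhoff sum $\sum_{k=0}^{n-1} J_H(\alpha_X^{-k}x)$, which by the ergodicity of $\lambda$ behaves like $n\,\Dim(\lambda,H)\log\alpha$.

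The integrability lemma \ref{lemma.technical.1} supplies the $L^1$-domination that makes this rigorous: it allows one to invoke a generalized (Maker) ergodic theorem, so that the averaged product converges to $\int J_H\,\mathrm{d}\lambda$ even though the summand is only an almost-sure limit of the $k$-dependent quantities $\phi_{\alpha^{-k}\rho}(\alpha_X^{-k}x)$, and it also controls the boundary contributions arising when the slab $B_\rho^T(e)$ is replaced by the genuine ball $B(e,\rho)$ in passing from the product to $Q_{\alpha^n}(x)$. Combining the transverse and fiber asymptotics then yields $\Dim(\lambda,G)\le \Dim(\lambda,H)+\DimT(\lambda,G/H)$, which together with proposition \ref{prop.subadd} gives the theorem.

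The main obstacle is precisely the interchange of the two limiting scales. Lemma \ref{lemma.technical.2} controls only the regime in which the \emph{transverse} scale shrinks while the fiber neighbourhoods $A,B$ stay fixed, whereas $Q_\rho(x)$ requires the diagonal regime in which the transverse and fiber scales shrink together at rate $\alpha^n$. Converting the former into control of the latter demands uniformity across the orbit $(\alpha_X^{-k}x)_k$, and this is exactly what the domination from lemma \ref{lemma.technical.1} and Maker's ergodic theorem provide; carefully estimating the error terms where the ball and slab scales cross is the delicate, technical heart of the argument, faithfully paralleling the corresponding step in \cite{LY}.
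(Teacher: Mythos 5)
Your proposal is correct and follows essentially the same route as the paper: both reduce to the reverse inequality via proposition \ref{prop.subadd}, telescope the same factors $\phi_{\alpha^{n-k}}(\alpha_X^{-k}x)$ along the dynamics, identify their limits with lemma \ref{lemma.technical.2}, and use the integrability from lemma \ref{lemma.technical.1} to justify the ergodic averaging. The only difference is cosmetic: where you invoke Maker's generalized ergodic theorem to handle the interchange of the fiber and transverse scales over the full product, the paper truncates the product at $p_n=\lfloor(1-\varepsilon)n\rfloor$, bounds the discarded tail factor by $1$, and carries out the $\varepsilon$-argument (the set $M$ where $\rho(x)>\rho_0$) by hand, which amounts to proving the needed case of Maker's theorem directly.
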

\begin{proof}
Due to proposition \ref{prop.subadd}, we just need to establish
\[ \Dim(\lambda,G) - \Dim(\lambda,H) \leq \DimT(\lambda,G/H) \text. \]
Fix some open relatively compact neighbourhood $B$ of the identity in $G$. We must prove that the push-forward measure 
\[ \pi_* \left( \frac{\sigma_G(x)|B}{\sigma_G(x)(B)} \right) \]
has lower dimension $\geq \Dim(\lambda,G)-\Dim(\lambda,H)$ at the identity of $G/H$, for $\lambda$-almost every $x$ (proposition \ref{prop.transverse.dimension}).
Introduce the ``transverse ball''
\[ B_\rho^T(g)=\pi^{-1}(B(\pi(g),\rho)),\quad \rho>0,\quad g\in G \text. \] 
Note that $B_\rho^T(gg')=B_\rho^T(g)g'$ because the metric on $G$ is right-invariant. To shorten notations we denote the transverse ball at identity $B^T_\rho(e)$ by $B_\rho^T$. 

We only need to prove that for $\lambda$-almost every $x$,
\[ \liminf_{n \to \infty} \frac{1}{n \log \alpha} \log \left( \frac{\sigma(x)(B \cap B_{\alpha^n}^T)}{\sigma(x)(B)} \right) \geq \Dim(\lambda,G)-\Dim(\lambda,H)\text. \]

The key of the argument is the relation
\[ \frac{\sigma(x)(B \cap B_{\alpha^n}^T)}{\sigma(x)(B)} 
= \frac{\sigma(x)(B \cap B_{\alpha^n}^T)}{\sigma(x)(\alpha_G B \cap B_{\alpha^n}^T)} \times \frac{\sigma(x)(\alpha_G B)}{\sigma(x)(B)} \times \frac{\sigma(x)(\alpha_G B \cap B_{\alpha^n}^T)}{\sigma(x)(\alpha_G B)}\text{.} \]

This relation implies, by an obvious induction (recall that $\sigma(\alpha x)=\alpha_* \sigma(x)$ almost everywhere and $\alpha_{G/H} B_\rho^T=B_{\alpha \rho}^T$) that for any $p, n \geq 1$, 
\[ \begin{split} \frac{\sigma(x)(B \cap B_{\alpha^n}^T)}{\sigma(x)(B)} & = \left( \prod_{i=0}^{p-1}  \frac{\sigma(\alpha_X^{-i}x)(B \cap B_{\alpha^{n-i}}^T)}{\sigma(\alpha_X^{-i}x)(\alpha B \cap B_{\alpha^{n-i}}^T)} \times \frac{\sigma(\alpha_X^{-i}x)(\alpha B)}{\sigma(\alpha_X^{-i}x)(B)} \right)\times\\& \frac{\sigma(\alpha_X^{-p}x)(B \cap B_{\alpha^{n-p}}^T)}{\sigma(\alpha_X^{-p}x)(B)} \text.
\end{split}
\]

If $p \leq n$, the last factor on the right-hand side is obviously less than $1$. Now let 
\[ \phi_\rho(x) = \log \frac{\sigma(x)(\alpha_G B \cap B_\rho^T)}{\sigma(x)(B \cap B_\rho^T)}, \tau(x) = \log \frac{\sigma(x)(\alpha_G B)}{\sigma(x)(B)}, \phi_*(x) = \inf_{\rho > 0} \phi_\rho(x) \]
so that for any $p \leq n$ we have
\begin{equation} \label{eq.dem.1} \frac{1}{n \log \alpha} \log \frac{\sigma(x)(B \cap B_{\alpha^n}^T)}{\sigma(x)(B)} \geq \frac{1}{n \log \alpha} \sum_{i=0}^{p-1} \tau(\alpha_X^{-i}x) -  \frac{1}{n\log \alpha} \sum_{i=0}^{p-1} \phi_{\alpha^{n-i}}(\alpha_X^{-i}x)\text.
\end{equation}

Fix some small $\varepsilon > 0$ and let $p_n$ be the integral part of $(1-\varepsilon)n$, so that the numbers $\alpha^{n-i}$ converge to $0$ as $n$ tends to infinity, and uniformly so with respect to $i \leq p_n$.

Let us take care of the first member of the right-hand side. We have
\begin{equation} \liminf_{n \to \infty} \frac{1}{n \log \alpha} \sum_{i=0}^{p_n-1} \tau(\alpha_X^{-i} x) = (1-\varepsilon) \int_X \frac{1}{\log \alpha} \log \left( \frac{\sigma(x)(\alpha_G B)}{\sigma(x)(B)} \right) = (1- \varepsilon) \Dim(\lambda,G) 
\end{equation}
by virtue of Birkhov's ergodic theorem and formula \eqref{eq.dimension}.

We now handle the remaining member of the right-hande side. For $\lambda$-almost every $x$ there is a real number $\rho(x)>0$ such that for any $\rho < \rho(x)$ there holds
\[ \phi_\rho(x) \geq \log\left( \frac{\sigma_H(x)(\alpha_G B)}{\sigma_H(x)(B)} \right) - \varepsilon \]
(lemma \ref{lemma.technical.2}). Choose $\rho_0>0$ small enough that, letting 
\[ M = \{x\in X\ ;\ \rho(x)>\rho_0\} \]
we have
\[ \int_{X \setminus M} \phi_* \mathrm{d} \lambda \geq -\varepsilon \]
(lemma \ref{lemma.technical.1}). We split the sum on the left-hand side of \eqref{eq.dem.1} over indices $i$ such that $\alpha_X^{-i} x \in M$ and indices $i$ such that $\alpha_X^{-i} x \notin M$. First,
\[\begin{split} \liminf_{n \to \infty}& \frac{-1}{n \log \alpha} \sum_{0 \leq i \leq p_n}  \mathbf{1}_M(\alpha_X^{-i}x) \phi_{\alpha^{n-i}}(\alpha_X^{-i}x) \geq  \\ & \liminf_{n \to \infty} \frac{-1}{n \log \alpha} \sum_{0 \leq i \leq p_n} \mathbf{1}_M(\alpha_X^{-i}x) \left( \log \left( \frac{\sigma_H(\alpha_X^{-i}x)(\alpha B)}{\sigma_H(\alpha_X^{-i}x)(B)} \right) - \varepsilon \right) \\
& = -(1-\varepsilon)\frac{1}{\log \alpha} \int_M \left( \log \left(\frac{\sigma_H(x)(\alpha_G B)}{\sigma_H(x)(B)}\right) - \varepsilon\right) d\lambda(x)\\ & \geq - \Dim(\lambda,H)  + \frac{\varepsilon}{\log \alpha} \text.
\end{split}
\]
Second, 
\[ \begin{split} 
\liminf_{n \to \infty}& \frac{1}{n} \sum_{0 \leq i \leq p_n} \mathbf{1}_{\complement M}(\alpha_X^{-i} x) \phi_{\alpha^{n-i}}(\alpha_X^{-i}x)\\ & \geq \liminf_{n \to \infty} \frac{1}{n} \sum_{0 \leq i \leq p_n} \mathbf{1}_{\complement M}(\alpha_X^{-i}x) \phi_*(\alpha_X^{-i}x) \\ & = (1-\varepsilon) \int_{\complement M} \phi_* d\lambda \geq -\varepsilon(1-\varepsilon) \text. 
\end{split}
\]

All in all, we get, for all $\varepsilon>0$, the inequality
\[ \liminf_{n \to \infty} \frac{1}{n \log \alpha} \log \left( \frac{\sigma(x)(B \cap B_{e^{-n}}^T)}{\sigma(x)(B)} \right) \geq (1-\varepsilon) \Dim(\lambda,G) - \Dim(\lambda,H) + \frac{\varepsilon + \varepsilon(1-\varepsilon)}{\log \alpha} \]

The theorem is thus proved.
\end{proof}
\section{Complex hyperbolic spaces}
\subsection{Basic facts}
Fix an integer $n \geq 1$. We denote by $G$ the group $\mathbf{PU}(1,n)$, \emph{i.e.} the projective unitary group associated with an hermitian form of signature $(1,n)$. We fix once and for all an Iwasawa decomposition $G=KAN$. Here, $K$ is a maximal compact subgroup of $G$, $A$ is a Cartan subgroup, and $N$ a maximal unipotent subgroup. We identify the quotient $G/K$ with complex hyperbolic space of (complex) dimension $n$, $\Hc^n$. The hyperbolic metric of $\Hc^n$ is denoted by $d$. The group of isometric transformations of $\Hc^n$ is equal to $G$. The complex hyperbolic space has sectional curvature lying between $-4$ and $-1$.

We denote by $\partial \Hc^n$ the boundary at infinity of $\Hc^n$. Recall that $\Hc^n$ is homeomorphic to the $2n$-dimensional ball $\mathbf{B}^{2n}$ and thus $\partial \Hc^n$ is homeomorphic to the $(2n-1)$-dimensional sphere $\mathbf{S}^{2n-1}$.

A holomorphic totally geodesic submanifold of (complex) dimension one is called a \emph{complex geodesic}. These objects play a major role in the questions we will be interested in. The boundary of a complex geodesic is called a \emph{chain}. Recall the following easy, although important, fact:
\begin{proposition}[\cite{Goldman}, theorem 3.1.11]
\begin{enumerate}
\item Any pair of distinct points in $\Hc^n \cup \partial \Hc^n$ lies on a unique complex geodesic.
\item Given a nonzero tangent vector $v$ in the tangent space $T_x \Hc^n$ there is a unique complex geodesic containing $x$ and tangent to $v$.
\item Any pair of distinct points of the boundary $\partial \Hc^n$ lies on a unique chain.
\end{enumerate}
\end{proposition}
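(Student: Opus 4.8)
The plan is to work in the projective model. Realize $\Hc^n$ as $\mathbf{P}(V_+)$, where $V_+=\{v\in\C^{n+1}:\langle v,v\rangle>0\}$ is the index-one cone of the fixed Hermitian form $\langle\cdot,\cdot\rangle$ of signature $(1,n)$, so that $\partial\Hc^n=\mathbf{P}(V_0\setminus\{0\})$ is the projectivized null cone; thus the positive index is $1$ and the negative index is $n$. A complex geodesic is then precisely $\mathbf{P}(W)\cap\Hc^n$ for a complex $2$-plane $W\subset\C^{n+1}$ on which the form restricts with signature $(1,1)$ (a standard fact, since such a set is an isometrically embedded copy of $\Hc^1$, which is totally geodesic and holomorphic), and its boundary — the associated chain — is $\mathbf{P}(W)\cap\partial\Hc^n$. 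With this dictionary all three assertions become statements of linear algebra about $2$-planes, and uniqueness is immediate in each case: a complex geodesic through two given points, resp.\ through $x$ with a prescribed complex tangent direction, both determines and is determined by the $2$-plane $W$ spanned by the corresponding lifts.

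For (1) and (3), lift the two distinct points to linearly independent vectors $\tilde p,\tilde q$, set $W=\mathrm{span}_{\C}(\tilde p,\tilde q)$, and read the signature of $W$ off the sign of its Gram determinant
\[ G=\langle\tilde p,\tilde p\rangle\,\langle\tilde q,\tilde q\rangle-|\langle\tilde p,\tilde q\rangle|^2 . \]
Here $\langle\tilde p,\tilde p\rangle\geq 0$ and $\langle\tilde q,\tilde q\rangle\geq 0$, with equality exactly for boundary points. When at least one point lies in $\Hc^n$, the reverse Cauchy--Schwarz inequality in signature $(1,n)$ — for non-proportional vectors of the closed positive cone, $|\langle\tilde p,\tilde q\rangle|^2>\langle\tilde p,\tilde p\rangle\langle\tilde q,\tilde q\rangle$ — yields $G<0$. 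In the remaining case, both lifts are null (this is the setting of (3)), so $G=-|\langle\tilde p,\tilde q\rangle|^2$, and $G<0$ follows once we know $\langle\tilde p,\tilde q\rangle\neq 0$; this is the non-existence of a totally isotropic $2$-plane, itself a consequence of the fact that a maximal isotropic subspace has dimension $\min(1,n)=1$. In every case $G<0$, so the Hermitian Gram matrix has negative determinant and hence eigenvalues of opposite sign; since the ambient positive index is $1$, the signature of $W$ is exactly $(1,1)$. Therefore $\mathbf{P}(W)$ carries the unique complex geodesic, resp.\ chain, through the two points.

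For (2), lift $x$ to a positive vector $\tilde x$ and use the standard identification $T_x\Hc^n\cong\mathrm{Hom}(\C\tilde x,\tilde x^{\perp})\cong\tilde x^{\perp}$, where $\tilde x^{\perp}$ is negative definite of complex dimension $n$. Under this identification a nonzero tangent vector $v$ corresponds to a nonzero $w\in\tilde x^{\perp}$, and the complex line $\C v$ corresponds to $\C w$. Then $W=\C\tilde x\oplus\C w$ is orthogonal with $\tilde x$ positive and $w$ negative, hence of signature $(1,1)$, so $\mathbf{P}(W)\cap\Hc^n$ is a complex geodesic through $x$ whose complex tangent line at $x$ is $\C w$, i.e.\ it is tangent to $v$. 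Conversely, any complex geodesic through $x$ arises from a $2$-plane containing $\tilde x$, whose tangent direction at $x$ recovers $\C w$ and hence $W$, which gives uniqueness.

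The only genuine content is the reverse Cauchy--Schwarz inequality (equivalently, the control of the signature of $W$), so I would isolate and prove it first, after which everything reduces to bookkeeping in the projective model. The mildly delicate points are the boundary case of (3), where one needs the non-existence of a $2$-dimensional isotropic subspace rather than a strict inequality between nonzero reals, and in (2) the correct functorial identification of $T_x\Hc^n$ with $\tilde x^{\perp}$ so that ``tangent to $v$'' translates into the complex line $\C w$. I expect the reverse Cauchy--Schwarz step to be the main obstacle, but it is classical for Lorentzian Hermitian forms and can be checked directly after diagonalizing the form.
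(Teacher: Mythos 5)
The paper gives no proof of this proposition at all---it is quoted verbatim from Goldman's book (\cite{Goldman}, Theorem 3.1.11)---so there is no in-paper argument to compare against; your projective-model proof is correct and is essentially the standard one found in the cited source (complex geodesics as $\mathbf{P}(W)\cap\Hc^n$ for signature-$(1,1)$ planes $W$, with the Gram determinant and the reverse Cauchy--Schwarz inequality controlling the signature, and the totally-isotropic-subspace bound handling the case of two boundary points). All the steps check out, including the identification $T_x\Hc^n\cong\tilde x^{\perp}$ in part (2), so nothing further is needed.
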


Recall also that the boundary $\partial \Hc^n$ carries a natural CR-structure. We will not explicitly make use of this CR-structure but we note the following fact, in relation with the previous theorem.
\begin{proposition}[\cite{Goldman}, theorem 4.3.5]
Let $\xi \mapsto P_\xi$ be the natural CR-structure on the boundary $\partial \Hc^n$. If $\xi$ belongs to $\partial \Hc^n$ and $v \in T_\xi \partial \Hc^n$ is a non-zero tangent vector at $\xi$, there is a chain passing through $x$ and tangent to $v$ if and only if $v$ does not belong to $P_\xi$.
\end{proposition}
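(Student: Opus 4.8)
The plan is to pass to the ball model, use the homogeneity of $\partial \Hc^n$ under $G=\mathbf{PU}(1,n)$ to reduce the statement to a single base point, and then settle both implications by one explicit computation there. The equivalence is a purely infinitesimal, CR-invariant assertion, so once it is verified at one point it transports everywhere.

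First I would recall that in the ball model $\Hc^n=\{z\in\mathbf{C}^n:|z|<1\}$ the totally geodesic holomorphic submanifolds of complex dimension one are exactly the intersections $L\cap\Hc^n$ with $L$ an affine complex line meeting the ball, so that every chain is a circle $L\cap\partial\Hc^n$. The group $G$ acts by holomorphic isometries that extend to CR-automorphisms of $\partial\Hc^n=S^{2n-1}$; hence each $g\in G$ preserves the distribution $\xi\mapsto P_\xi$ and sends complex geodesics to complex geodesics, thus chains to chains. Since $G$ is transitive on $\partial\Hc^n$ and $dg_\xi$ carries $P_\xi$ onto $P_{g\xi}$ and chain-tangent directions at $\xi$ onto chain-tangent directions at $g\xi$, it suffices to prove the equivalence at the fixed point $\xi_0=(1,0,\ldots,0)$.

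At $\xi_0$ a direct computation identifies the outward normal to $S^{2n-1}$ as $\partial_{x_1}$, so that $P_{\xi_0}=T_{\xi_0}S^{2n-1}\cap J\,T_{\xi_0}S^{2n-1}$ equals the last $n-1$ complex coordinate directions $\{0\}\times\mathbf{C}^{n-1}$, while the complementary (Reeb) direction inside $T_{\xi_0}S^{2n-1}$ is $i\xi_0=\partial_{y_1}$. I would then parametrize a chain through $\xi_0$ by an affine line $L=\{\xi_0+\tau w:\tau\in\mathbf{C}\}$ and impose $|\xi_0+\tau w|^2=1$, i.e. $2\,\mathrm{Re}(\tau w_1)+|\tau|^2|w|^2=0$. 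To first order this cuts out the direction $\tau\in i\bar{w_1}\mathbf{R}$ in the $\tau$-plane, and differentiating yields the tangent vector of the chain at $\xi_0$, equal up to positive scaling to $i\bar{w_1}\,w=(i|w_1|^2,\,i\bar{w_1}w_2,\ldots,i\bar{w_1}w_n)$.

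Both implications now fall out by reading off the component along $i\xi_0$, which is $|w_1|^2$. If $w_1\neq0$ the tangent vector has nonzero Reeb component and so lies outside $P_{\xi_0}$; moreover its transverse component is positive while the $P_{\xi_0}$-component $i\bar{w_1}(w_2,\ldots,w_n)$ is arbitrary, so varying $w$ (and using that the unoriented chain may be traversed either way) realizes every direction transverse to $P_{\xi_0}$. Conversely, if $w_1=0$ then $w\in P_{\xi_0}$ and the defining equation forces $\tau=0$, so $L$ is tangent to the sphere at $\xi_0$ and meets the closed ball only there, hence supports no genuine chain. I expect the only non-formal step, and thus the main obstacle, to be precisely this degenerate case: one must check that tangency of $v$ to $P_{\xi_0}$ forces the supporting complex line to miss the interior of the ball, which is exactly the ``only if'' direction of the stated equivalence.
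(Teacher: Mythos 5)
Your argument is correct. The paper offers no proof of this proposition at all --- it is quoted verbatim from Goldman (\cite{Goldman}, theorem 4.3.5) and used as a black box --- so there is nothing to compare against except the citation. Your ball-model computation is a legitimate self-contained verification: the reduction to $\xi_0=(1,0,\dots,0)$ by $G$-homogeneity is sound because $G$ acts by biholomorphisms of the ball extending to CR-automorphisms of the sphere, hence preserves both the distribution $P$ and the family of chains; the identification $P_{\xi_0}=\{0\}\times\mathbf{C}^{n-1}$ and the first-order analysis of $2\,\mathrm{Re}(\tau w_1)+|\tau|^2|w|^2=0$ are right, giving the chain tangent $i\bar{w_1}w$ with Reeb component $|w_1|^2\ge 0$. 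The one point I would make fully explicit is the link between the two cases: completing the square shows the solution set in the $\tau$-plane is the circle $\bigl|\tau+\bar{w_1}/|w|^2\bigr|=|w_1|/|w|^2$, so the line $L$ meets the \emph{open} ball (and the intersection with the sphere is a genuine circle rather than the single point $\tau=0$) precisely when $w_1\neq 0$; this simultaneously disposes of your ``degenerate case'' and confirms that every $w$ with $w_1\neq 0$ really does produce a chain, which your surjectivity argument onto the transverse directions needs. With that observation spelled out, both implications are complete.
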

\subsection{Heisenberg group and the boundary at infinity}
Recall that $G=KAN$ is a fixed Iwasawa decomposition. The unipotent group $N$ is isomorphic to Heisenberg group which we now introduce.

Let $V$ be a real vector space of dimension $2(n-1)$, and let $\omega$ be a non-degenerate skew-symmetric bilinear form on $V$. It is well known that two such forms are always conjugate, so we may agree (for example) that $V=\R^{2(n-1)}$ and 
\[ \omega(x,y) = \sum_{i=1}^{n-1} x_{2i-1} y_{2i}-x_{2i} y_{2i-1} \text. \]
The product set $\mathcal H_n = V \times \R$ endowed with the group law
\[ (v,s) \cdot (w,t) = (v+w, s+t+\omega(v,w)) \]
and the product topology is a locally compact second countable topological group called the \emph{Heisenberg group}. The centre $Z$ of $\mathcal H_n$ is $\{0\} \times \R$, and is also equal to the derived subgroup.

We now define the \emph{Heisenberg metric} on $\mathcal H_n$ in the following way. First, let $\| \cdot \|_H$ be the ``Heisenberg pseudo-norm''
\[ \| (v,t) \|_H = (\|v\|^4+t^2)^{1/4} \]
(where $\|v\|$ is the euclidean norm on $\R^{2(n-1)}$). Then let
\[ d_H((v,t),(w,s)) = \| (v,t) \cdot (w,s)^{-1} \|_H \]
($v,w \in \mathcal H_n$). Obviously, $d_H$ is a right-invariant metric. If we denote by $h_\lambda$ ($\lambda \in \C^*$) the transformation
\[ h_\lambda(v,t) = (\lambda v,|\lambda|^2 t) \]
we define a similitude transformation of $\mathcal H_n$ with similitude ratio $|\lambda|$, and $h_\lambda$ is a group automorphism as we readily check. Such a transformation is called a \emph{Heisenberg similitude}. 

Let us return to the Iwasawa decomposition $G=KAN$. The group $AN$ fixes a unique point $\xi_+ \in \partial \Hc^n$, and $A$ itself fixes another point $\xi_- \in \partial \Hc^n$. We look at the operation of $N$ on $\partial \Hc^n$. This operation is transitive on the complement of $\{ \xi_+\}$. The mapping $n \mapsto n \xi_-$ ($n \in N$) is a homeomorphism $\phi$ from $N$ onto $\partial \Hc^n \setminus \{\xi_+ \}$, and this homeomorphism is the restriction of a homeomorphism from the Alexandrov compactification $N \cup \{\infty\}$ onto $\partial \Hc^n$.

The centre $Z$ of $N$ is mapped by $\phi$ onto the chain passing through $\xi_-$ and $\xi_+$. More generally, $\phi$  maps the translates of $Z$ in $N$ (\emph{i.e.} the fibers of the quotient mapping $N \to N/Z$) onto chains passing through $\xi_+$. 

In other words, $\phi$ defines a quotient bijection from $N/Z$ onto the sets of all chains passing through $\xi_+$. 

\subsection{Metrics on the boundary at infinity}
Recall that the boundary $\partial \Hc^n$  is diffeomorphic to the $(2n-1)$-sphère $\mathbf{S}^{2n-1}$. We can endow $\partial \Hc^n$ with the usual ``spherical metric''. Of course, the metric itself depends on the choice of the diffeomorphism, but the bilipschitz equivalence class is uniquely defined, because the sphere is compact. In other words, if $d$ and $d'$ are two spherical metrics on $\partial \Hc^n$, there exist a uniform constant $C>0$ such that
\[ \frac{1}{C} d(\xi,\eta) \leq d'(\xi,\eta) \leq C d(\xi,\eta) \]
for any $\xi,\eta \in \partial \Hc^n$. In particular, any notion invariant under scale changes (for example local lower or upper dimension of some measure at some point) does not depend on the choice of a spherical metric.

Now there is another bilipschitz equivalence class of metrics on $\partial \Hc^n$, less elementary albeit more ``natural'' in some sense. Recall that $\Hc^n$ has pinched negative curvature, so that we may consider Busemann functions
\[ b_\xi(x,y) = \lim_{t \to \infty} d(x,\xi_t)-d(y,\xi_t), \quad x,y \in \Hc^n,\quad \xi \in \partial \Hc^n \]
where $\xi_t$ parametrizes some unit-speed geodesic such that $\lim_{t \to \infty} \xi_t = \xi$. For any $x \in \Hc^n$, the \emph{Gromov metric} from $x$ between $\xi$ and $\eta$ in $\partial \Hc^n$ is then
\[ d_x(\xi,\eta) = e^{-\frac{1}{2} \left( b_\xi(x,p)+b_\eta(x,p) \right)} \]
where $p$ is some point on the (real) geodesic from $\xi$ to $\eta$. This metric is compatible with the topology of $\partial \Hc^n$. The point is that the family $(d_x)_{x \in \Hc^n}$ satisfies the following properties: for any $x,y \in \Hc^n$, $\xi,\eta \in \partial \Hc^n$ and $g \in G$,
\begin{enumerate}
\item Conformality: $d_y(\xi,\eta)=e^{\frac{1}{2}(b_\xi(x,y)+b_\eta(x,y))} d_x(\xi,\eta)$
\item Equivariance: $d_{gx}(g\xi,g\eta)=d_x(\xi,\eta)$
\end{enumerate}

Conformality property above makes it clear that all metrics $(d_x)_{x \in \Hc^n}$ are pairwise bilipschitz equivalent.

The following lemma results from easy computations.
\begin{lemma} \label{lemma.gromov.spherical}
Gromov metrics are not bilipschitz equivalent to spherical metrics. Nonetheless, if $d_G, d_E$ are a Gromov and a spherical, respectively, metric on the boundary, there is a constant $C>0$ such that
\[ \frac{1}{C}  d_G(\xi,\eta)^2 \leq d_E(\xi,\eta) \leq C d_G(\xi,\eta) \]
for any $\xi,\eta \in \partial \Hc^n$.
\end{lemma}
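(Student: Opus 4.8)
The plan is to transport the whole question to the Heisenberg group through the homeomorphism $\phi$ and reduce it to a direct comparison between the Heisenberg metric $d_H$ and the Euclidean metric on a bounded region of $\mathcal H_n \simeq \R^{2n-1}$. First I would recall the two standard identifications: under $\phi$, the Gromov metric $d_G$ is bilipschitz equivalent to the Heisenberg (Kor\'anyi) metric $d_H$ on $N$, whereas the spherical metric $d_E$ is, on any fixed compact chart, bilipschitz equivalent to the Euclidean metric $d_{\mathrm{Euc}}$ coming from the coordinates $(v,t) \in V \times \R$. Since $\partial \Hc^n$ is compact, both $d_G$ and $d_E$ are bounded, so it suffices to prove the two inequalities for nearby points, where one may work inside a single such chart; pairs at definite distance are handled trivially since all four quantities are then bounded above and below. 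The uniformity of the constant $C$ over the whole sphere, including near $\xi_+ = \phi(\infty)$, then follows from compactness together with the homogeneity of $\partial \Hc^n$ under $G$, choosing for each base point an Iwasawa chart in which that point plays the role of the origin.

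The heart of the matter is a norm computation. Writing out the group law, for $(v,t),(w,s)$ in a bounded region $\{\|v\|,\|w\|,|t|,|s| \leq R\}$ one gets $d_H((v,t),(w,s)) = (a^4 + \tilde\tau^2)^{1/4}$ and $d_{\mathrm{Euc}}((v,t),(w,s)) = (a^2 + \tau^2)^{1/2}$, where $a = \|v-w\|$ is the horizontal displacement, $\tau = |t-s|$, and $\tilde\tau = |t - s - \omega(v,w)|$. The twist term obeys $|\omega(v,w)| = |\omega(v,w-v)| \leq \|\omega\|\,R\,a$, so $|\tilde\tau - \tau| \lesssim a$ on the region. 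For the upper bound $d_E \leq C\,d_G$ I would use $a^2 \leq d_H^2$ and $|\tilde\tau| \leq d_H^2$ (both immediate from the definition of $d_H$), combined with $\tau^2 \leq 2\tilde\tau^2 + 2|\omega(v,w)|^2$ and the boundedness of $\tilde\tau$, to bound $a^2 + \tau^2$ by a multiple of $d_H^2$. For the lower bound $d_E \geq \tfrac1C d_G^2$, i.e.\ $d_H^2 \leq C\,d_E$, I would invoke subadditivity of the square root, $(a^4 + \tilde\tau^2)^{1/2} \leq a^2 + |\tilde\tau|$, and then estimate $a^2 \leq R\,a \leq R\,d_{\mathrm{Euc}}$ and $|\tilde\tau| \leq \tau + |\omega(v,w)| \lesssim d_{\mathrm{Euc}}$; the decisive observation is that the Euclidean distance dominates both $a$ and $\tau$ individually from above, while $a^2$ is small precisely because $a$ is bounded.

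Finally, the squaring is genuine in the central (vertical) direction, which both explains the exponent $2$ and gives the non-equivalence: taking $\xi = \phi(0,0)$ and $\eta_k = \phi(0,t_k)$ with $t_k \to 0$ yields $d_G(\xi,\eta_k) \asymp |t_k|^{1/2}$ while $d_E(\xi,\eta_k) \asymp |t_k|$, so $d_E/d_G \to 0$ and no bilipschitz equivalence can hold. I expect the inequalities themselves to be routine once boundedness is exploited; the only real bookkeeping, and the place where care is needed, is the reduction step: establishing cleanly that $d_G \asymp d_H$ and $d_E \asymp d_{\mathrm{Euc}}$ on charts and then propagating the local comparison uniformly across the compact boundary, absorbing the finitely many local constants (one per chart covering the sphere) into a single $C$.
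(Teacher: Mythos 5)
Your proposal is correct and follows exactly the route the paper has in mind: the paper dismisses this lemma as ``easy computations'' but supplies the same ingredients elsewhere, namely the local bilipschitz identifications $d_G \asymp d_H$ and $d_E \asymp d_{\mathrm{Euc}}$ in charts (lemma \ref{lemma.loclip}) and the compact-set comparison of $d_H$ with the Euclidean metric (lemma \ref{lemma12}.1), which your norm computation with $a=\|v-w\|$, $\tau$, $\tilde\tau$ reproves directly. The compactness/homogeneity patching of the finitely many chart constants and the vertical-direction example showing failure of bilipschitz equivalence are both sound, so nothing is missing.
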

In fact, the boundary $\partial \Hc^n$ has dimension $2n-1$ with respect to spherical metrics, and $2n$ with respect to Gromov metrics. More generally, we recall the following
\begin{theorem}\label{th.balogh}
Let $S$ be some subset of $\partial \Hc^n$ and let $\alpha,\beta$ be the Hausdorff dimensions of $S$ with respect to spherical metrics, and Gromov metrics, respectively. Then
\[ \max\{\alpha,2\alpha-2n\} \leq \beta \leq \min\{2 \alpha,\alpha + 1 \} \]
and these inequalities are sharp.
\end{theorem}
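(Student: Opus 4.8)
The plan is to reduce everything to the Heisenberg model of the boundary and to compare coverings by Euclidean and by Heisenberg balls at carefully matched scales. Via the homeomorphism $\phi$ of Section 5.2 we may work on $N \cong \mathcal{H}_n = V \times \R$, where $V = \R^{2(n-1)}$ has Euclidean dimension $2n-2$ and the centre $Z = \{0\} \times \R$ is one-dimensional; there the Gromov metric $d_G$ becomes bi-Lipschitz to the Heisenberg metric and the spherical metric $d_E$ becomes bi-Lipschitz to the Euclidean metric. Two of the four inequalities are already essentially free from lemma \ref{lemma.gromov.spherical}: since $d_E \leq C d_G$, the identity map $(\partial\Hc^n, d_G) \to (\partial \Hc^n, d_E)$ is Lipschitz and cannot increase Hausdorff dimension, whence $\alpha \leq \beta$; and since $d_G \leq C' d_E^{1/2}$, the identity $(\partial \Hc^n, d_E) \to (\partial \Hc^n, d_G)$ is $\tfrac12$-H\"{o}lder, so $\beta \leq 2\alpha$. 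These give the ``isotropic'' bounds $\alpha \leq \beta \leq 2\alpha$.

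The two remaining inequalities $\beta \leq \alpha + 1$ and $\beta \geq 2\alpha - 2n$ require the finer, \emph{anisotropic} comparison of the two metrics, reflecting that they differ only along the one-dimensional centre $Z$. The key geometric input I would establish is that, uniformly in the base point, a Heisenberg ball of radius $\rho$ coincides up to bounded distortion with a Euclidean box of side $\rho$ in the $2n-2$ horizontal directions of $V$ and side $\rho^2$ in the central direction; equivalently, a Euclidean ball of radius $r$ is comparable to a Heisenberg ``box'' of side $r$ horizontally and $r^{1/2}$ (measured in $d_G$) vertically. From this one reads off the two covering counts: to cover a Euclidean $r$-ball one needs $\asymp r^{-1}$ Heisenberg balls of radius $r$ (one in each horizontal direction, and $r/r^2 = r^{-1}$ along the centre), while to cover a Heisenberg $\rho$-ball one needs $\asymp \rho^{-(2n-2)}$ Euclidean boxes of side $\rho^2$.

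I would then transfer these counts to Hausdorff content. For $\beta \leq \alpha+1$: fix $s > \alpha$ and an economical Euclidean cover $\{B_E(x_i, r_i)\}$ with $\sum_i r_i^s$ arbitrarily small; replacing each $B_E(x_i, r_i)$ by the $\asymp r_i^{-1}$ Heisenberg balls of radius $r_i$ that cover it, the Gromov $t$-content is bounded by $\sum_i r_i^{-1} r_i^t = \sum_i r_i^{t-1}$, which is small as soon as $t \geq s+1$; hence $\beta \leq s + 1$, and letting $s \downarrow \alpha$ gives $\beta \leq \alpha + 1$. For the lower bound I run the comparison the other way: starting from an economical Heisenberg cover $\{B_G(x_i,\rho_i)\}$ with $\sum_i \rho_i^{\beta'}$ small (where $\beta' > \beta$), I replace each Heisenberg ball by the $\asymp \rho_i^{-(2n-2)}$ Euclidean boxes of side $\rho_i^2$ covering it; the Euclidean $u$-content is then $\lesssim \sum_i \rho_i^{-(2n-2)}(\rho_i^2)^u = \sum_i \rho_i^{2u - (2n-2)}$, which is small once $2u - (2n-2) \geq \beta'$, that is $u \geq \beta'/2 + (n-1)$. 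Hence $\alpha \leq \beta/2 + (n-1)$, i.e. $\beta \geq 2\alpha - 2(n-1)$, which is slightly stronger than, and in particular implies, the stated bound $\beta \geq 2\alpha - 2n$.

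For sharpness I would exhibit, following Balogh \emph{et al.}, explicit subsets realizing equality. Subsets of a single chain (a translate of $Z$), on which $d_G$ is comparable to $d_E^{1/2}$, realize $\beta = 2\alpha$ for every $\alpha \in [0,1]$; subsets contained in a horizontal direction, where the two metrics are comparable, realize $\beta = \alpha$; and suitable products of a Cantor set in $V$ with a Cantor set in $Z$ interpolate and attain the bounds $\beta = \alpha + 1$ and $\beta = 2\alpha - 2(n-1)$ in the appropriate ranges of $\alpha$. The main obstacle is the first geometric step: proving the anisotropic ball comparison \emph{uniformly} over the compact boundary --- in particular controlling the symplectic correction term in the Heisenberg group law and the behaviour near the distinguished point $\xi_+$ where $\phi$ degenerates --- rather than only the isotropic two-sided estimate of lemma \ref{lemma.gromov.spherical}. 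Once that uniform comparison is in hand, the covering bookkeeping above and the sharpness constructions are routine.
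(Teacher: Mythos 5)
Your argument is essentially correct, but note that the paper does not prove this theorem at all: its ``proof'' consists of citing \cite{BaloghTyson} (theorem 2.4) together with lemma \ref{lemma.loclip}, which transfers the statement from the Heisenberg group (with its Heisenberg and Euclidean metrics) to the boundary (with its Gromov and spherical metrics). What you have written is, in effect, a self-contained sketch of the content of the cited dimension-comparison theorem: the two ``isotropic'' bounds $\alpha \leq \beta \leq 2\alpha$ from lemma \ref{lemma.gromov.spherical}, and the two anisotropic bounds from the ball--box comparison (a $d_G$-ball of radius $\rho$ is, up to the vertical shear coming from the term $\omega(v_0,v)$ in the group law, a box of horizontal side $\rho$ and vertical side $\rho^2$) followed by the standard covering bookkeeping. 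Your counts are right, and your observation that the argument actually yields $\beta \geq 2\alpha - 2(n-1)$ is correct and worth stressing: this is the sharp form, it is the one quoted in the paper's introduction (there the boundary Heisenberg group is $V\times\R$ with $\dim V = 2(n-1)$), and the constant $2n$ in the statement of theorem \ref{th.balogh} appears to be an indexing slip carried over from the convention in which the Heisenberg group has a $2n$-dimensional horizontal layer. Two caveats on what remains to be done in your sketch. First, the uniformity issue near $\xi_+$ is harmless but should be dispatched explicitly: Hausdorff dimension is a local and countably stable invariant, so one may delete the point $\xi_+$, exhaust $N$ by compacta, and use the locally bilipschitz statements of lemma \ref{lemma.loclip} on each piece. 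Second, the sharpness of the lower bound $\beta = 2\alpha - 2(n-1)$ in the full range of $\alpha$ is genuinely the hardest part of \cite{BaloghTyson} and is not obtained by naive products of Cantor sets; it requires carefully constructed self-similar ``vertical'' fractals, so if you want a complete proof rather than a proof of the inequalities alone, that construction cannot be dismissed as routine.
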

\begin{proof}
See \cite{BaloghTyson} theorem 2.4, and lemma \ref{lemma.loclip} \emph{infra}.
\end{proof}

Before ending this subsection, let us state for future reference the following
\begin{lemma} \label{lemma12}
Let $d_H$ and $d_E$ be the Heisenberg metric and Euclidean metric respectively on Heisenberg space $\mathcal H_n$. 
\begin{enumerate}
\item For any compact subset $K \subset \mathcal H_n$, there is a constant $C>1$ such that
\[ \frac{1}{C} d_H(h,h')^2 \leq d_E(h,h') \leq C d_H(h,h') \]
for any $h,h' \in K$.
\item If $h,h' \in \mathcal H_n$ are such that $h-h'$ belongs to the centre, then 
\[ d_E(h,h')=d_H(h,h')^2 \text. \]
\item The quotient, on $\mathcal H_n/Z$, of the Heisenberg metric, is equal to the quotient of the euclidean metric.
\end{enumerate}
\end{lemma}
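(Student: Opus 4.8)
The plan is to reduce everything to the explicit form of the two distances and then to separate the ``horizontal'' contribution from the ``central'' one. First I would record that the inverse in $\mathcal{H}_n$ is $(v,t)^{-1}=(-v,-t)$ (using $\omega(v,v)=0$), so that for $h=(v,t)$ and $h'=(w,s)$ one has
\[ d_H(h,h')^4 = \|v-w\|^4 + \bigl(t-s-\omega(v,w)\bigr)^2, \qquad d_E(h,h')^2 = \|v-w\|^2 + (t-s)^2 . \]
Writing $x=v-w$ and $\tau=t-s$, and using skew-symmetry to rewrite $\omega(v,w)=-\omega(v,x)$, the \emph{central defect} $u:=\tau+\omega(v,x)$ is the only quantity in which the two metrics genuinely differ, since $d_H(h,h')^4=\|x\|^4+u^2$ while $d_E(h,h')^2=\|x\|^2+\tau^2$.

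Claim (2) then falls out with no compactness needed: if $h-h'$ lies in the centre then $v=w$, so $x=0$ and $\omega(v,x)=0$, whence $d_H(h,h')^4=(t-s)^2$ and $d_E(h,h')=|t-s|$, giving $d_H(h,h')^2=d_E(h,h')$ exactly. For claim (1) I would exploit that on a compact $K$ the quantities $\|v\|$, $\|x\|$ and $|\tau|$ are all bounded, say by $R$; in particular $|\omega(v,x)|\le C_\omega R\,\|x\|$, so $|u|\le C'$ on $K$. The left inequality $d_H^2\le C\,d_E$ follows from $\|x\|^4\le R^2\|x\|^2$ together with $u^2\le 2\tau^2+2\omega(v,x)^2$, which yield $d_H^4\le C^2(\|x\|^2+\tau^2)=C^2 d_E^2$. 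For the right inequality $d_E\le C\,d_H$ I would instead write $\tau=u-\omega(v,x)$ to get $d_E^2\le(1+2C_\omega^2R^2)\|x\|^2+2u^2$, and then close the estimate using the two elementary bounds $\|x\|^2\le d_H^2$ and $u^2\le |u|\,d_H^2\le C'\,d_H^2$, both of which come directly from $d_H^4=\|x\|^4+u^2$.

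Finally, for claim (3) I would compute the two quotient metrics on $\mathcal{H}_n/Z\cong V$ directly from the definition in Section \ref{subsection.monotonicity}. Translating $h$ and $h'$ along the central fibres shifts $\tau$ by an arbitrary real while leaving $x$ fixed, so infimising the distance over the fibres annihilates the central coordinate in both cases: the quotient of $d_H$ equals $(\|v-w\|^4)^{1/4}=\|v-w\|$, and the quotient of $d_E$ equals $(\|v-w\|^2)^{1/2}=\|v-w\|$. Hence both quotient metrics coincide with the Euclidean norm on $V$, as asserted.

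I expect the only genuine subtlety to be the right-hand inequality in (1). This is precisely the point at which the ``vertical'' behaviour of the Heisenberg metric (exponent $2$, as reflected in Lemma \ref{lemma.gromov.spherical}) competes with its horizontal behaviour, and the compactness hypothesis is indispensable here because the cross term $\omega(v,w)$ is unbounded on all of $\mathcal{H}_n$; everything else is a routine manipulation of the explicit formulas.
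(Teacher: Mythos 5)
Your computation is correct and complete: the reduction to $d_H^4=\|x\|^4+u^2$ versus $d_E^2=\|x\|^2+\tau^2$ with $u=\tau+\omega(v,x)$, the exact identity in (2), the compactness-based bounds in (1), and the annihilation of the central coordinate in (3) are exactly the ``straightforward computations'' the paper leaves to the reader. Nothing to correct.
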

\begin{proof}
Straightforward computations.
\end{proof}

\subsection{The unit tangent bundle}
Let $T^1 \Hc^n$ be the unit tangent bundle of $\Hc^n$, that is, the set of all pairs $(x,u)$, where $u \in \Hc^n$ and $u \in T_x \Hc^n$ is a unitary tangent vector at $x$. The operation of $G$ on $\Hc^n$  extends to an operation on $T^1 \Hc^n$. One may identify $T^1 \Hc^n$ with the quotient space $G/M$. Here, $M$ is the centralizer, in $K$, of $A$ (remember that we fixed once and for all an Iwasawa decomposition $G=KAN$).

 If $u$ belongs to $T^1 \Hc^n$, we denote by $u^+$ and $u^-$ the forward and backward, respectively, endpoints in $\partial \Hc^n$ of the geodesic defined by $u$. Likewise, if $gM$ belongs to $G/M$, we denote by $g^+$ and $g^-$ the points $u^-$ and $u^-$, respectively, where $u$ is the element of $T^1 \Hc^n$ corresponding to $gM$.

Since $A$ is included in the normalizer of $M$, the operation of $A$ on $G$ by translation on the right induces an operation of $A$ on $G/M$, \emph{i.e.} $(gM,a) \mapsto gaM$. There is a (unique) isomorphism $\R \to A$, $t \mapsto a_t$, such that this operation of $A$ on $G/M$ is identified with the geodesic flow on $T^1 \Hc^n$. 

On the contrary, $N$ does not normalize $M$, so the operation of $N$ on $G$ by translation on the right does not give rise to an operation on $G/M$. Note, though, that $M$ does normalize $N$, so that $N$-orbits are well-defined in $G/M$. Actually, for any $v \in G/M$, the $N$-orbit $vN$ id the unstable manifold passing through $v$.

We need to introduce the so-called Hopf coordinates on $T^1 \Hc^n$, \emph{i.e} the mapping 
\[ T^1 \Hc^n \to \partial^2 \Hc^n \times \R,\quad u \mapsto (u^-, u^+, b_{u^-}(u,o)) \]
where we denote, as is customary, by $\partial^2 \Hc^n$ the set of all pairs $(\xi,\eta) \in \partial \Hc^n \times \partial \Hc^n$ with $\xi \neq \eta$, and $o$ is some fixed ``base point'' in $\Hc^n$. This is a diffeomorphism. To shorten notations, we will write $u=(\xi,\eta,s)$ if $\xi=u^-$, $\eta=u^+$ and $s=b_\xi(u,o)$.

We will need the following facts.
\begin{lemma}[\cite{PaulinHersonsky}, appendix] \label{lemma.loclip}
For any $g \in G$, let $\phi_g : N \to \partial \Hc^n \setminus \{ g^-\}$ be the mapping $n \mapsto (gn)^+$.
\begin{enumerate}
\item If $N$ is endowed with Heisenberg metric and $\partial \Hc^n$ with a Gromov metric, $\phi_g$ is locally bilipschitz.
\item If $N$ is endowed with euclidean metric and $\partial \Hc^n$ with a spherical metric, $\phi_g$ is locally bilipschitz.
\end{enumerate}
\end{lemma}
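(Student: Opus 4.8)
The "final statement" in this excerpt is Lemma \ref{lemma.loclip}, attributed to the appendix of \cite{PaulinHersonsky}. Below is a plan for how one would prove it.

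\bigskip

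The plan is to reduce both statements to explicit computations in the upper half-space (Siegel domain) model of $\Hc^n$, where the action of $N$, the Busemann functions, and the two boundary metrics all admit concrete formulas. First I would fix the Siegel model so that $\xi_+$ is the distinguished point at infinity, $\partial\Hc^n\setminus\{\xi_+\}$ is identified with the Heisenberg group $\mathcal H_n$ via the homeomorphism $\phi$ of subsection 5.2, and the geodesic flow together with the Busemann cocycle are written out in Hopf coordinates $(\xi,\eta,s)$. The key structural input is equivariance of the Gromov metric ($d_{gx}(g\xi,g\eta)=d_x(\xi,\eta)$) together with its conformality, which reduces the problem of bounding the local distortion of $\phi_g$ to understanding how $g$, acting on the boundary, scales the relevant metric. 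Since $\phi_g(n)=(gn)^+=g\cdot\phi_e(n)$, it suffices to treat $\phi_e$ (the basic chart $n\mapsto n^+$) and then transport the estimate by the isometric action of $g$, paying only a bounded local factor coming from the conformal derivative of $g$ on a fixed compact set.

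For part (1), the heart of the matter is to show that $\phi_e:(N,d_H)\to(\partial\Hc^n\setminus\{\xi_+\},d_G)$ is locally bilipschitz. This is where the precise compatibility of the Heisenberg metric with the Gromov metric is used: the Heisenberg dilations $h_\lambda$ (which are the boundary traces of the action of $A$) act on $(N,d_H)$ as similitudes of ratio $|\lambda|$, and by equivariance the flow $a_t$ scales $d_G$ by the matching factor $e^{-t/2}$ coming from the Busemann term in the definition of $d_x$. Thus one verifies the bilipschitz bound on a single fundamental annulus (say $\{1\le \|n\|_H\le \lambda_0\}$) by a compactness argument — both metrics induce the same topology and are continuous and positive off the diagonal there — and then uses the common dilation-equivariance of $d_H$ and $d_G$ under $h_\lambda\leftrightarrow a_t$ to propagate the estimate to all scales. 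I would phrase this as: the identity map between the two metrics is a homeomorphism commuting with a self-similar structure, hence globally (in particular locally) bilipschitz.

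For part (2), I would combine part (1) with the comparison lemmas already stated. Lemma \ref{lemma.gromov.spherical} gives $\tfrac1C d_G^2\le d_E\le C\,d_G$ between a Gromov and a spherical metric on the boundary, while Lemma \ref{lemma12}(1) gives exactly the parallel comparison $\tfrac1C d_H^2\le d_E^{\mathcal H}\le C\,d_H$ between the Heisenberg and Euclidean metrics on any compact subset of $\mathcal H_n$. The point is that these two quadratic distortions are \emph{the same} distortion, arising from the same central (vertical) direction; so when one pulls the boundary spherical metric back through $\phi_g$, the square that appears on the Gromov side is precisely matched by the square on the Euclidean side of $\mathcal H_n$, and the two quadratic corrections cancel to leave a genuine bilipschitz comparison between $(N,d_E)$ and $(\partial\Hc^n,d_E)$ locally. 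Making this cancellation rigorous requires checking that the chart $\phi_e$ carries the central fibration of $N$ (the $Z$-cosets) to the corresponding structure on the boundary — which is exactly the content of subsection 5.2, where $\phi$ maps $Z$-cosets to chains — so that the ``vertical'' directions responsible for the squaring correspond under $\phi_e$.

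The main obstacle is the part (1) bilipschitz estimate, specifically controlling the behaviour \emph{across scales} rather than on a fixed compact set. The non-conformality of $d_H$ relative to the Euclidean metric makes naive coordinate estimates awkward, and one must be careful that the self-similar transport argument genuinely produces a single uniform bilipschitz constant rather than one degenerating as $\|n\|_H\to 0$ or $\infty$. The clean way to handle this is to lean entirely on the matching similitude structures ($h_\lambda$ on the domain, the flow $a_t$ on the target, related by equivariance) so that a single compact fundamental domain controls all scales; the local-but-uniform nature of the conclusion then follows because $\phi_g$ differs from $\phi_e$ only by the boundary action of $g$, whose conformal distortion is bounded on any compact piece. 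Part (2) is then essentially bookkeeping, the only subtlety being to confirm that the quadratic distortions on the two sides are indexed by the same (central) direction.
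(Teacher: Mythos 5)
The paper offers no proof of this lemma: it is quoted from the appendix of Hersonsky--Paulin, where both comparisons are obtained by explicit computation in the Siegel domain (the visual metric is compared there with the Cygan gauge, i.e.\ the Heisenberg metric). So any complete argument is necessarily your own; and your plan for part (1) has a genuine gap at its central step. The Gromov metric $d_x$ based at an \emph{interior} point $x$ is not rescaled by a constant factor under the flow $a_t$: equivariance plus conformality give $d_x(a_t\xi,a_t\eta)=e^{-\frac12\left(b_{a_t\xi}(x,a_tx)+b_{a_t\eta}(x,a_tx)\right)}\,d_x(\xi,\eta)$, and the exponent depends on $(\xi,\eta)$, ranging roughly over $[-t,t]$ as the pair moves from the attracting to the repelling fixed point of $a_t$; indeed $(\partial\Hc^n,d_x)$ is compact, so it admits no surjective similitude of ratio $\neq 1$. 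Consequently the step ``verify the estimate on one fundamental annulus and propagate by matching dilations'' does not run for $d_x$. The metric that genuinely carries the self-similar structure matching $h_\lambda$ is the horospherical (Hamenst\"adt) metric based at the deleted point $g^-$, for which $N$ acts by isometries and $a_t$ by true similitudes; to recover the statement for $d_x$ you must add the (necessary, if routine) comparison of that metric with $d_x$ on compact subsets of $\partial\Hc^n\setminus\{g^-\}$. Separately, ``both metrics are continuous and positive off the diagonal on a compact annulus'' does not bound the ratio of the two metrics as the two points collide; to reduce an arbitrary pair of nearby points to a compact family of unit-separation configurations you need the full two-point homogeneity (left translations of $N$ together with the dilations), and the left translations are isometries only for the horospherical metric, not for $d_x$ --- which is the same issue again.

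For part (2), the proposed ``cancellation of the two quadratic distortions'' is not a proof: the two-sided bounds $\frac1C d^2\leq d'\leq C\,d$ of lemmas \ref{lemma.gromov.spherical} and \ref{lemma12} lose a square on one side each and cannot be composed with part (1) to yield a bilipschitz estimate; one would have to show that the quadratic degeneration occurs along exactly corresponding directions and to the correct order, which is essentially as hard as a direct computation. Fortunately part (2) needs none of this: $\phi_g$ is the restriction to the submanifold $gN$ of the smooth map $h\mapsto h^+$, and is a diffeomorphism of $N$ onto $\partial\Hc^n\setminus\{g^-\}$; a $C^1$ diffeomorphism between Riemannian manifolds is locally bilipschitz, and both the Euclidean metric on $N\cong\R^{2n-1}$ and a spherical metric on $\partial\Hc^n$ are Riemannian. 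This elementary route proves statement 2 outright and is independent of statement 1, whose proof is the only place where the sub-Riemannian comparison of Hersonsky--Paulin is actually needed.
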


\section{Patterson-Sullivan theory}
\subsection{Limit set and growth exponent}
A good reference for this section is \cite{Roblin}. We keep the notations and conventions of the previous section. Let $\Gamma$ be a discrete subgroup of $G$. If $x$ is some point of $\Hc^n$, the set of accumulation points of the orbit $\Gamma \cdot x$ on $\Hc^n \cup \partial \Hc^n$ is a subset $\Lambda_\Gamma$ of the boundary, namely $\Lambda_\Gamma = \overline{\Gamma \cdot x} \cap \partial  \Hc^n$. This set does not depend on $x$. It is called the limit set of $\Gamma$. If $\Lambda_\Gamma$ a finite set, $\Gamma$ is called elementary, otherwise $\Gamma$ is called non-elementary.

The growth exponent of $\Gamma$,
\[ \delta_\Gamma = \limsup_{R \to \infty} \frac{1}{R} \mathrm{Card} \{\gamma \in \Gamma\ ;\ d(x,\gamma x) \leq R \} \]
does not depend on $x$. It is a finite number, $0 < \delta_\Gamma \leq 2n-1$. 

The study of $\Lambda_\Gamma$ and $\delta_\Gamma$ goes back a long way. We state the following important result, though we will not make use of it.
\begin{theorem}[\cite{BishopJones}, \cite{Stratmann}, \cite{PaulinBJ}]
Assume $\Gamma$ is non-elementary. Then the subset $\Lambda_\Gamma^c \subset \Gamma$ of conical limit points of $\Gamma$ has Hausdorff dimension $\delta_\Gamma$ with respect to Gromov metrics.
\end{theorem}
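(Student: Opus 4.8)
The plan is to prove the two inequalities $\dim_H(\Lambda_\Gamma^c) \leq \delta_\Gamma$ and $\dim_H(\Lambda_\Gamma^c) \geq \delta_\Gamma$ separately, the decisive structural input being that $\Gamma$ acts \emph{conformally} for a Gromov metric $d_o$, so that the ``shadows'' cast by orbit balls behave like genuine metric balls. Fix a base point $o \in \Hc^n$ and, for $y \in \Hc^n$ and $r>0$, let $\mathcal{O}_r(y) \subset \partial\Hc^n$ be the shadow of $B(y,r)$ seen from $o$, i.e. the set of endpoints $\xi$ of geodesic rays from $o$ that meet $B(y,r)$. The conformality and equivariance relations recorded above give, for $r$ fixed, the comparison $\mathrm{diam}_{d_o}\mathcal{O}_r(\gamma o) \asymp e^{-d(o,\gamma o)}$, with implied constants depending only on $r$; this is the bridge between the \emph{geometry} of the orbit and the \emph{metric} on the boundary.

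For the upper bound I would first recall that a point $\xi \in \Lambda_\Gamma^c$ is, by definition, approached by the orbit within bounded distance of the ray from $o$ towards $\xi$; hence there is $r_0$ such that every conical point lies in infinitely many shadows $\mathcal{O}_{r_0}(\gamma o)$. Consequently, for every $R$ the family $\{\mathcal{O}_{r_0}(\gamma o) : d(o,\gamma o) \geq R\}$ covers $\Lambda_\Gamma^c$, and by the diameter comparison the associated $s$-dimensional Hausdorff sum is controlled by the tail $\sum_{d(o,\gamma o)\geq R} e^{-s\,d(o,\gamma o)}$ of the Poincar\'e series. Since this series converges for every $s>\delta_\Gamma$, its tail tends to $0$ as $R\to\infty$, so $\mathcal{H}^s(\Lambda_\Gamma^c)=0$ and $\dim_H(\Lambda_\Gamma^c)\leq \delta_\Gamma$.

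For the lower bound I would invoke Patterson's construction to produce a conformal density, i.e. a Patterson--Sullivan measure $\mu$ of exponent $\delta_\Gamma$ supported on $\Lambda_\Gamma$, and then Sullivan's shadow lemma: there is $r_1$ with $\mu(\mathcal{O}_{r_1}(\gamma o)) \asymp e^{-\delta_\Gamma\,d(o,\gamma o)}$ uniformly in $\gamma$. Combining this with the diameter comparison, a ball $B(\xi,\rho)$ centred at a conical point and of radius $\rho \asymp e^{-d(o,\gamma o)}$ is squeezed between two shadows of comparable orbit points, so $\mu(B(\xi,\rho)) \asymp \rho^{\delta_\Gamma}$ along the conical sequence of scales. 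A Frostman/mass-distribution argument then yields $\dim_H(\Lambda_\Gamma^c) \geq \delta_\Gamma-\varepsilon$ on any subset of positive $\mu$-measure of the conical limit set, and letting $\varepsilon\to 0$ closes the gap. One must know that $\mu$ charges $\Lambda_\Gamma^c$; in the conservative/divergent case this holds because conical points have full $\mu$-measure, and in general the divergence of the Poincar\'e series for $s<\delta_\Gamma$ can be exploited directly.

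The main obstacle is the lower bound, and inside it the shadow lemma in the pinched-curvature setting of Paulin: the boundary is only a metric space, with no linear structure to organise shadows, so one must establish the two-sided comparison $\mu(\mathcal{O}_r(\gamma o)) \asymp e^{-\delta_\Gamma d(o,\gamma o)}$ and the ball-versus-shadow squeezing using only the conformality and the pinching of curvature. Here conformality is exactly what rescues the argument, since it forces shadows to be metric balls up to bounded distortion. An alternative route, closer to Bishop and Jones' original ``elementary'' proof, avoids Patterson--Sullivan altogether: for each $s<\delta_\Gamma$ the divergence of $\sum_\gamma e^{-s d(o,\gamma o)}$ lets one greedily select, scale by scale, orbit points with boundedly overlapping shadows, build a Cantor subset of $\Lambda_\Gamma^c$ as the nested intersection of these shadows, and equip it with the natural Frostman measure; the counting inherent in the divergence guarantees this Cantor set has dimension at least $s$.
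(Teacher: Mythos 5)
The paper does not prove this statement: it is quoted as a known theorem of Bishop and Jones (extended by Stratmann and, in pinched negative curvature, by Paulin) and is explicitly not used in the sequel, so there is no in-paper argument to compare against. Your sketch reproduces the standard proof from those references and is essentially sound, with two caveats. First, in the upper bound the radius $r_0$ within which the orbit approaches the ray towards $\xi$ depends on $\xi$, so one must write $\Lambda_\Gamma^c$ as the countable union of the sets $\Lambda_k$ of points approached within distance $k$, run the shadow-covering and Poincar\'e-series tail estimate on each $\Lambda_k$ separately, and conclude by countable stability of Hausdorff dimension; as written, a single uniform $r_0$ does not exist. Second, the Patterson--Sullivan plus shadow-lemma route for the lower bound genuinely fails when $\Gamma$ is of convergence type, since then $\mu(\Lambda_\Gamma^c)=0$ and the Frostman argument has nothing to bite on; the theorem as stated assumes only non-elementarity, so the Cantor-set construction you describe at the end (for each $s<\delta_\Gamma$, greedily selecting orbit points with boundedly overlapping shadows using divergence of $\sum_\gamma e^{-s\,d(o,\gamma o)}$, and equipping the nested intersection with the natural mass distribution) is not an optional ``alternative'' but the argument that is actually needed in general. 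With those provisos your outline matches the proofs in the cited literature.
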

Recall that a point $\xi \in \partial \Hc^n$ is a conical limit point if there is an infinite sequence $(\gamma_n)$ of (pairwise distincts) elements of $\Gamma$ such that the distance from $\gamma_n x$ to the geodesic $]x,\xi[$ is bounded (uniformly in $n$), for some $x$ (and, thus, for any $x$).

This theorem raises the following 
\begin{question}
Let $\Gamma$ be a discrete non-elementary subgroup of $G$. What is the Hausdorff dimension of $\Lambda_\Gamma^c$ with respect to the spherical metric on the boundary?
\end{question}

\subsection{Conformal densities}
\begin{definition}
Let $\Gamma$ be a non-elementary discrete subgroup of $G$. Let $\beta$ be some real number $\geq 0$. A $\Gamma$-conformal density of exponent $\beta$ is a family $(\mu_x)_{x \in \Hc^n}$ of finite measures on $\partial \Hc^n$ which satisfies 
\begin{enumerate}
\item $\Gamma$-equivariance: 
\[ \gamma_* \mu_x = \mu_{\gamma x} \]
for any $x \in \Hc^n$ and any $\gamma \in \Gamma$.
\item Conformality: for any $x,y \in \Hc^n$, $\mu_x$ and $\mu_y$ are equivalent measures and the Radon-Nikodym derivative is given by
\[ \frac{\mathrm{d} \mu_y}{\mathrm{d} \mu_x}(\xi) = e^{-\beta b_\xi(y,x)} \]
almost everywhere.
\end{enumerate}
\end{definition}

The following well-known theorem is basic.
\begin{theorem}[\cite{Roblin}]
Let $\Gamma$ be a non-elementary discrete subgroup of $G$, with growth exponent $\delta_\Gamma$. There exist a $\Gamma$-conformal density of exponent $\delta_\Gamma$.
\end{theorem}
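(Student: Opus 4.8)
The plan is to carry out the classical Patterson--Sullivan construction, realizing the conformal density as a weak-$*$ limit of normalized orbital measures. Fix a base point $o \in \Hc^n$ and for $s > \delta_\Gamma$ introduce the Poincaré series
\[ P_s = \sum_{\gamma \in \Gamma} e^{-s\, d(o, \gamma o)}, \]
which converges exactly for $s > \delta_\Gamma$ by the definition of the growth exponent. For each $x \in \Hc^n$ I would form the probability measure on the compactification $\Hc^n \cup \partial \Hc^n$
\[ \mu_x^s = \frac{1}{P_s} \sum_{\gamma \in \Gamma} e^{-s\, d(x, \gamma o)}\, \delta_{\gamma o}. \]

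First I would treat the divergence-type case, \emph{i.e.} assume $P_s \to \infty$ as $s \downarrow \delta_\Gamma$. Since $\Hc^n \cup \partial \Hc^n$ is compact, the family $(\mu_o^s)_s$ has weak-$*$ limit points; I pick a sequence $s_k \downarrow \delta_\Gamma$ along which $\mu_o^{s_k}$ converges to some $\mu_o$, and (passing to a subsequence) so that $\mu_x^{s_k}$ converges for $x$ in a countable dense set, hence for all $x$ via the conformality estimate below. Divergence of $P_s$ at $\delta_\Gamma$ forces the mass to escape every compact subset of $\Hc^n$, so $\mu_x$ is supported on $\partial \Hc^n$, in fact on $\Lambda_\Gamma$, since the atoms sit at orbit points $\gamma o$ that accumulate only on the limit set. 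Each $\mu_x$ is a probability measure, hence finite.

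Next I would verify the two defining properties. Equivariance already holds at the finite level: reindexing the sum by $\gamma \mapsto \gamma' \gamma$ and using that $\Gamma$ acts by isometries gives $\gamma'_* \mu_x^s = \mu_{\gamma' x}^s$ exactly, which passes to the limit. For conformality, the atom of $\mu_x^s$ at $\gamma o$ carries weight proportional to $e^{-s\, d(x, \gamma o)}$, so the ratio of weights for $\mu_y^s$ and $\mu_x^s$ is $e^{-s(d(y, \gamma o) - d(x, \gamma o))}$; as $\gamma o \to \xi \in \partial \Hc^n$ this exponent converges to $-s\, b_\xi(y,x)$ by the very definition of the Busemann cocycle, and at $s = \delta_\Gamma$ this yields the desired Radon--Nikodym derivative $e^{-\delta_\Gamma b_\xi(y,x)}$. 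A standard uniform-continuity argument upgrades this pointwise convergence of densities to the statement that $\mu_y$ is absolutely continuous with respect to $\mu_x$ with the stated derivative.

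The main obstacle is the convergence-type case, where $P_{\delta_\Gamma} < \infty$ and the naive limit would place mass inside $\Hc^n$ rather than on the boundary. Here I would invoke Patterson's device: construct a slowly varying increasing function $h : [0,\infty) \to (0,\infty)$ (\emph{i.e.} $h(t+c)/h(t) \to 1$ for each fixed $c$) such that the modified series $\sum_\gamma h(d(o,\gamma o))\, e^{-s\, d(o, \gamma o)}$ still has critical exponent $\delta_\Gamma$ but now diverges at $s = \delta_\Gamma$. Running the same weak-$*$ compactness argument with these reweighted orbital measures produces a limit supported on the boundary; the slow variation of $h$ guarantees that the extra factor does not disturb the limiting Radon--Nikodym computation, so the conformality exponent is still exactly $\delta_\Gamma$. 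This reweighting step is the only genuinely delicate point; the rest is compactness together with the elementary Busemann estimate.
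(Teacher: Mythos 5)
Your construction is the classical Patterson--Sullivan argument (Poincar\'e series, weak-$*$ limits of the normalized orbital measures, and Patterson's slowly varying weight $h$ in the convergence-type case), and it is correct; the only point stated a bit loosely is that the convergence $d(y,\gamma o)-d(x,\gamma o)\to b_\xi(y,x)$ for arbitrary sequences $\gamma o\to\xi$ is the continuous extension of the distance cocycle to the boundary, valid here because of pinched negative curvature, rather than literally ``the definition'' of the Busemann cocycle. The paper offers no proof of its own and simply cites Roblin, where precisely this construction is carried out, so your argument coincides with the intended one.
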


\subsection{Bowen-Margulis-Sullivan measure}
Now, let $\Gamma$ be a discrete non-elementary subgroup of $G$. Let $\mu$ be a $\Gamma$-conformal density of exponent $\delta_\Gamma$. Fix some arbitrary point $x \in \Hc^n$. We define the Bowen-Margulis-Sullivan measure $\BMS$ on $T^1 \Hc^n$:
\begin{equation} \label{eq.BMS}
\mathrm{d} \BMS(u) = e^{\delta_\Gamma(b_\xi(x,u)+b_\eta(x,u))} \mathrm{d} \mu_x(\xi) \mathrm{d} \mu_x(\eta) \mathrm{d}s
\end{equation}
where  $u=(\xi,\eta,s)$.
This Radon measure does not depend on the choice of $x$. It is invariant under the geodesic flow as well as under $\Gamma$. Consequently, the measure on $\Gamma \backslash T^1 \Hc^n$ defined by passing to the quotient is a Radon measure, invariant under the geodesic flow. Remember that $T^1 \Hc^n$ is identified with $G/M$, so that we get a Radon measure on $\Gamma \backslash G/M$, invariant under the operation of $A$ on the right. 
\begin{definition}
We say that a discrete non-elementary subgroup $\Gamma$ of $G$ has finite BMS measure if the associated Bowen-Margulis-Measure on $\Gamma \backslash T^1 \Hc^n$ (or $\Gamma \backslash G/M$) is finite.
\end{definition}

\begin{theorem}[\cite{Roblin}]
Let $\Gamma$ be a discrete non-elementary subgroup of $G$. If $\Gamma$ has finite BMS measure, the $\Gamma$-conformal density of exponent $\delta_\Gamma$ is unique, atomless, its support is the limit set $\Lambda_\Gamma$, and the conical limit set $\Lambda_\Gamma^c$ has full measure. Furthermore, the BMS measure is (strongly) mixing with respect to the geodesic flow.
\end{theorem}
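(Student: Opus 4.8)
The plan is to follow the by-now classical Patterson--Sullivan machinery as developed in the pinched negative curvature setting. The cornerstone is \emph{Sullivan's shadow lemma}: fixing a basepoint $x$, one shows that for $R$ large enough there is a constant $C>1$ such that for every $\gamma \in \Gamma$,
\[ \frac{1}{C} e^{-\delta_\Gamma d(x,\gamma x)} \leq \mu_x(\mathcal{O}_R(x,\gamma x)) \leq C e^{-\delta_\Gamma d(x,\gamma x)}, \]
where $\mathcal{O}_R(x,\gamma x)$ denotes the shadow, seen from $x$, of the ball of radius $R$ about $\gamma x$. The proof is a direct computation from the conformality relation $\mathrm{d}\mu_{\gamma x}/\mathrm{d}\mu_x(\xi)=e^{-\delta_\Gamma b_\xi(\gamma x,x)}$ together with the equivariance $\gamma_* \mu_x = \mu_{\gamma x}$, once one knows the density is nontrivial, i.e.\ gives positive mass to a fixed ball; the lower bound additionally requires that the mass does not escape, which is where one uses that the support meets the limit set. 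Everything else follows by feeding this estimate into covering and Borel--Cantelli arguments.

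Second, I would dispatch the statements about the support and the conical limit set. Equivariance forces the support of $\mu_x$ to be $\Gamma$-invariant and closed, hence to contain $\Lambda_\Gamma$; the shadow lemma shows conversely that no mass sits off $\Lambda_\Gamma$, giving $\mathrm{supp}\,\mu_x=\Lambda_\Gamma$. For the conical limit set I would invoke the Hopf--Tsuji--Sullivan dichotomy: the finiteness of $\BMS$ forces $\Gamma$ to be of divergence type, so that the Poincaré series diverges at $s=\delta_\Gamma$; a Borel--Cantelli argument using the shadow estimates above (the shadows of the orbit points form, up to bounded multiplicity, a Vitali-type cover of the conical points) then yields that $\Lambda_\Gamma^c$ is $\mu_x$-conull.

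Third come ergodicity, uniqueness and atomlessness, all of which flow from the product structure \eqref{eq.BMS} of $\BMS$ and the Hopf argument. Ergodicity of the geodesic flow with respect to $\BMS$ follows because $\Lambda_\Gamma^c$ is conull and the stable/unstable foliations carry the conformal density; the Hopf ratio ergodic theorem then shows any flow-invariant function is constant along stable and unstable leaves, hence constant. Uniqueness is then immediate: given a second conformal density of the same exponent, the Radon--Nikodym derivative between the two induced measures on $\partial^2\Hc^n$ is a $\Gamma$-invariant measurable function, hence $\BMS$-essentially constant by ergodicity, so the two densities coincide up to scale. Atomlessness follows from the shadow lemma: an atom at $\xi$ would, since $\xi$ is almost surely conical, be caught in infinitely many shadows of shrinking radius whose measures tend to zero, a contradiction.

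Finally, the hard part is strong mixing. Here I would follow Roblin: ergodicity of the flow is upgraded to mixing via the equidistribution of the Bowen--Margulis measure under the flow, which holds precisely because $\BMS$ is finite and the length spectrum of $\Gamma$ is non-arithmetic. Non-arithmeticity is automatic in the present rank-one setting, since for a non-elementary group acting on $\Hc^n$ the translation lengths are not contained in a discrete subgroup of $\R$; with that input Roblin's mixing theorem applies verbatim. I expect this last step to be the main obstacle, as it is the only place where a soft ergodic-theoretic argument does not suffice and one must genuinely control the interaction between the geodesic flow and the horospherical foliations.
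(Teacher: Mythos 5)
The paper offers no proof of this statement -- it is quoted verbatim from Roblin's monograph -- and your outline is a faithful reconstruction of the standard argument found there (shadow lemma, Hopf--Tsuji--Sullivan dichotomy, Hopf argument for ergodicity and hence uniqueness, and Babillot--Roblin mixing via non-arithmeticity of the length spectrum, which indeed holds automatically for non-elementary groups in rank one). The only point worth tightening is uniqueness: two conformal densities of the same exponent need not be mutually absolutely continuous a priori, so one should apply the ergodicity argument to each of them against their sum (which is again a conformal density) rather than directly to the pair.
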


The Bowen-Margulis-Measure on $\Gamma \backslash G/M$ does not exactly suit our needs, because, as we said, $N$ does not act on the right on this space. Hence we are lead to consider the unique $M$-invariant lifting of this measure to $\Gamma \backslash G$. We still call this measure on $\Gamma \backslash G$ the Bowen-Margulis-Measure. Note that the right action of $A$ on $\Gamma \backslash G/M$ extends to a right action on $\Gamma \backslash G$. The space $\Gamma \backslash G$ is sometimes called \emph{the frame bundle of $\Gamma \backslash \Hc^n$}, and the operation of $\R$ on $\Gamma \backslash G$, $(t, \Gamma g) \mapsto \Gamma g a_t$ is called the \emph{frame flow}.  The following theorem is crucial.
\begin{theorem}[\cite{Winter}]
Let $\Gamma$ be a discrete non-elementary subgroup of $G$. Assume that $\Gamma$ is Zariski-dense and has finite BMS measure. Then the BMS measure on $\Gamma \backslash G$ is (strongly) mixing under the right operation of $A$.
\end{theorem}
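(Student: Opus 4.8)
The plan is to promote the mixing of the geodesic flow on $\Gamma \backslash G / M$, which holds by the theorem of Roblin quoted above, to mixing of the frame flow on $\Gamma \backslash G$, by decomposing $L^2$ under the compact group $M$ and using Zariski-density to control the non-trivial pieces. Throughout, write $m = \BMS$ for the $M$-invariant lift of the BMS measure to $\Gamma \backslash G$; recall that $m$ is finite, invariant under the right action of $A$ (this is the frame flow) and under the right action of $M$, and that $M$ and $A$ commute since $M$ centralises $A$.

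First I would decompose $L^2(\Gamma \backslash G, m)$ into its $M$-isotypic components. By the Peter--Weyl theorem applied to the right action of the compact group $M$, one has an orthogonal decomposition $L^2(\Gamma \backslash G, m) = \bigoplus_{\sigma \in \widehat M} H_\sigma$. Because the right $A$-action commutes with the right $M$-action, each $H_\sigma$ is invariant under the frame flow, and matrix coefficients $\langle a_t \cdot f, g\rangle$ between vectors lying in distinct isotypic components vanish identically. Mixing of the frame flow is therefore equivalent to the statement that, for every $\sigma$ and all $f,g \in H_\sigma$, the coefficient $\int_{\Gamma \backslash G} f(x a_t)\, \overline{g(x)}\, dm(x)$ converges, as $t \to \infty$, to $\tfrac{1}{m(\Gamma \backslash G)} \big(\int f\, dm\big)\big(\overline{\int g\, dm}\big)$.

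On the trivial isotypic component $H_{\mathrm{triv}}$, consisting of the right $M$-invariant functions, this is nothing but mixing of the $A$-action on $L^2(\Gamma \backslash G / M, \BMS)$, i.e. mixing of the geodesic flow, which is exactly Roblin's theorem; note that the constants, and hence both means above, live in $H_{\mathrm{triv}}$, so on every non-trivial $H_\sigma$ the assertion reduces to the decay $\langle a_t \cdot f, g\rangle \to 0$. The whole difficulty is thus concentrated in proving this decay on the non-trivial components, and this is where Zariski-density must enter (Howe--Moore is unavailable, since the ambient Haar measure may be infinite and we only control the finite measure $m$). I would proceed in two stages. First, \emph{ergodicity of the frame flow}: an $A$-invariant function lying in some non-trivial $H_\sigma$ would, via the $M$-action, produce a measurable $\Gamma$-equivariant reduction of the structure group of the frame bundle to a proper closed subgroup of $M$; the Zariski-density of $\Gamma$ in $G$ forces the relevant holonomy (transitivity) group to be Zariski-dense in, hence equal to, $M$, ruling out such a reduction. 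Second, \emph{from ergodicity to mixing}: I would run a Babillot-type argument, taking a weak-$*$ limit of suitable $a_t$-translates of the twisted product measure $d\mu_x(\xi)\,d\mu_x(\eta)\,ds$ written in Hopf coordinates and carrying the $M$-component; a failure of decay would confine the joint length-and-holonomy spectrum of closed geodesics to a proper closed subgroup of $\R \times M$, whereas mixing of the geodesic flow already excludes arithmeticity of the $\R$-factor and Zariski-density excludes any proper $M$-factor.

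The main obstacle is precisely this treatment of the non-trivial isotypic components. The delicate point is that one cannot invoke Howe--Moore in this infinite-volume setting; instead one must establish ergodicity of the frame flow directly from Zariski-density -- via the Zariski closure of the holonomy group of the strong unstable foliation being all of $M$ -- and then upgrade it to mixing by a careful asymptotic analysis, in Hopf coordinates, that simultaneously controls the real translation parameter and the compact $M$-holonomy along the flow. Everything else (the Peter--Weyl decomposition, the reduction to isotypic components, and the treatment of the trivial component) is routine once this core is in place.
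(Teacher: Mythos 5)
The paper offers no proof of this statement: it is quoted as a black box from \cite{Winter}, so there is no internal argument to measure yours against. What you have written is, in outline, a faithful reconstruction of the strategy of that reference (which builds on Brin's transitivity-group approach to frame flows and on Babillot's argument for mixing of the geodesic flow): Peter--Weyl decomposition of $L^2(\Gamma\backslash G,m)$ under the right $M$-action, reduction of mixing to decay of correlations on the non-trivial isotypic components, ergodicity of the frame flow via density in $M$ of the holonomy (transitivity) group of the strong stable/unstable foliations, and a Babillot-type weak-$*$ limit in Hopf coordinates to pass from ergodicity to mixing. The reduction to isotypic components and the treatment of the trivial component via Roblin's theorem are correct and routine, exactly as you say.

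As a proof, however, the proposal stops precisely where the mathematics starts, and the two steps you defer are the entire content. First, ``Zariski-density of $\Gamma$ forces the transitivity group to be dense in $M$'' is not a formal consequence of Zariski-density in $G$: one needs the local product structure of the BMS measure along stable and unstable leaves to even define the relevant holonomy group almost everywhere, and then a genuinely algebraic theorem (of Benoist type, on the closure of the group generated by the $M$-parts of Jordan decompositions of elements of a Zariski-dense subgroup) to conclude density; for $G=\mathbf{PU}(1,n)$ one must also verify that the resulting closed subgroup acts transitively on the fibre. Second, the upgrade from ergodicity to mixing requires showing that a failure of decay confines the joint length-and-holonomy data of closed geodesics to a proper closed subgroup of $\R\times M$ (up to conjugacy), and then that this is impossible; the density of this joint spectrum is again a nontrivial theorem, not a corollary of the non-arithmeticity of the length spectrum used for the geodesic flow. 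So your text is a correct road map to the cited literature rather than a self-contained proof --- which, to be fair, is exactly the status the statement has in the paper itself.
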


We now state some easy facts and do routine checks.

The space $X=\Gamma \backslash G$ is a standard Borel space on which $N$ operates (on the right) in a Borel way with discrete stabilizers. We may disintegrate the Bowen-Margulis-Sullivan measure on $\Gamma \backslash G$ along $N$. Assume now that $\Gamma$ is Zariski-dense and has finite BMS measure. We obtain a measurable mapping $\sigma : X \to \Mrad(N)$. Also, let $a$ be some non-identity element of $A$. There is a real number $t \neq 0$ such that for any $x \in X$ and $n \in N$, 
\[ x n a = xa (a^{-1} n a) = xa h_{e^t}(n) \]
where $h_{e^t}$ is the Heisenberg similitude with ratio $e^t$. We may assume that $t<0$, \emph{i.e.} $h_{e^t}$ is a contracting similitude of $N$ (endowed with the Heisenberg metric).

Last, we know that the BMS measure $\lambda$ on $X$ is $a$-ergodic because of the previous theorem. The conditions stated at the beginning of section \ref{section.dimension} are thus satisfied, and we may consider  the dimension of $\lambda$ along $N$ and $Z$, $\Dim(\lambda,N)$ and $\Dim(\lambda,Z)$ respectively, as well as the transverse dimension $\DimT(\lambda,N/Z)$. One easily checks that
\[ \Dim(\lambda,N) \in [0,2n],\quad \Dim(\lambda,Z) \in [0,2],\quad \DimT(\lambda,N/Z) \in [0,2(n-1)] \text. \]
Indeed, recall that by definition $\Dim(\lambda,N)$ (for example) is the dimension of the conditional measures of $\lambda$ along $N$. Now $N$ has Hausdorff dimension $2n$ (with respect to the Heisenberg metric) so we see that $\Dim(\lambda,N) \in [0,2n]$. In the same way, $Z$ has Hausdorff dimension $2$ (with respect to the restricted Heisenberg metric), and $N/Z$ has Hausdorff dimension $2(n-1)$, indeed it is isometric to $\R^{2(n-1)}$.

We now apply the Ledrappier-Young formula. This yields 
\begin{equation} \label{eq.LY.BMS}
 \Dim(\lambda,N) = \Dim(\lambda,Z)+\DimT(\lambda,N/Z)
\end{equation}

Let us check the hypotheses 1 and 2 (see beginning of section \ref{sect.LY}). First, the right operation of $N$ on $\Gamma \backslash G$ may not have uniformly discrete stabilizers, but the workaround is easy, as this operation is in fact essentially free, \emph{i.e.} the stabilizer of $\lambda$-almost every point is trivial. Indeed, if $g \in G$ is such that $gn=\gamma g$ with $n \in N$ different from the identity and $\gamma \in \Gamma$, then $g^+$ must be a parabolic limit point. There are only countably many parabolic limit points, and the Patterson-Sullivan measure is atomless (because $\Gamma$ has finite BMS measure), so the essential freeness of the operation of $N$ on $\Gamma \backslash G$ is a consequence of the very definition of BMS measure, equation \ref{eq.BMS}. Second, the quotient metric space $N/Z$ (endowed with the quotient of Heisenberg metric) satisfies Besicovitch covering property because it is isometric to euclidean space $\R^{2(n-1)}$. 

We state for future reference the following
\begin{lemma} \label{lemma14}
Keep the previous assumptions and notations. For any $g \in G$, denote by $\phi_g$ the mapping $N \to \partial \Hc^n \setminus \{ g^-\}$, 
\[ \phi_g(n) = (gn)^+ \text. \]
Then for $\lambda$-almost every $x=\Gamma g \in \Gamma \backslash G$, the push-forward $(\phi_g)_* \sigma(x)$ is equivalent to the Patterson-Sullivan measure (restricted to the complement of $g^+$), and the Radon-Nikodym derivative is a continuous mapping 
\[ \partial \Hc^n \setminus \{ g^+ \} \to ]0,+\infty[ \text. \]
\end{lemma}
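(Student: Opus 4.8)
The plan is to match the abstract conditional $\sigma(x)$ produced by the disintegration of Section 2 with the \emph{concrete} conditional measures of $\BMS$ along strong unstable leaves, and then to transport the latter to the boundary through the endpoint parametrization $\phi_g$, using the explicit formula \eqref{eq.BMS}. First I would recall (Section 5) that the right $N$-orbits in $G/M$ are precisely the strong unstable leaves of the geodesic flow, and that for $x=\Gamma g$ the map $\phi_g\colon n\mapsto (gn)^+$ is a homeomorphism of the leaf through $x$ onto $\partial\Hc^n\setminus\{g^-\}$. Since disintegrating $\BMS$ along the right action of $N$ yields exactly Roblin's horospheric (unstable) conditional measures --- this is the content of the remark of Section 2, see also \cite{Roblin} --- it is enough to identify the image under $\phi_g$ of the unstable conditional on a single leaf.

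Next I would read this conditional off from \eqref{eq.BMS}. In Hopf coordinates $u=(u^-,u^+,s)$ the leaf through $x$ is the slice of $\{u^-=g^-\}$ transverse to the flow, parametrized by its forward endpoint $u^+=\xi$ via $\phi_g$; the two remaining directions, namely the $u^-$-direction and the flow direction $s$, are transverse to the leaf. Disintegrating \eqref{eq.BMS} accordingly, the factor $e^{\delta_\Gamma b_{u^-}(x,u)}\,d\mu_x(u^-)\,ds$ is absorbed into the transverse measure, and the conditional along the leaf is proportional to $e^{\delta_\Gamma b_{u^+}(x,u)}\,d\mu_x(u^+)$. Transporting through $\phi_g$, the measure $(\phi_g)_*\sigma(x)$ is therefore proportional to $\xi\mapsto e^{\delta_\Gamma b_\xi(x,p_g(\xi))}\,d\mu_x(\xi)$ on $\partial\Hc^n\setminus\{g^-\}$, where $p_g(\xi)$ is the footpoint of the unique point of the leaf with forward endpoint $\xi$. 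In particular $(\phi_g)_*\sigma(x)$ is equivalent to $\mu_x$, hence to the Patterson--Sullivan measure, on the complement of $g^-$.

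It then remains to check that the Radon--Nikodym derivative $\xi\mapsto e^{\delta_\Gamma b_\xi(x,p_g(\xi))}$ is continuous and strictly positive as a map into $]0,+\infty[$. Positivity is clear, and continuity follows from the joint continuity of the Busemann cocycle $(\xi,y)\mapsto b_\xi(x,y)$ together with the continuity of $\xi\mapsto p_g(\xi)$, the latter being the inverse homeomorphism $\phi_g^{-1}$ post-composed with the footpoint projection; replacing $\mu_x$ by $\mu_o$ for a fixed base point only multiplies by the continuous positive conformal factor $e^{-\delta_\Gamma b_\xi(o,x)}$, so the statement is insensitive to the choice of base point. The main obstacle is the very first step: making the dictionary between the abstract disintegration $\sigma$ of Section 2 and Roblin's unstable conditionals fully precise, in particular the correct bookkeeping of the flow direction and of the $M$-fibre when passing between $\Gamma\backslash G$ and $\Gamma\backslash G/M$, and the verification that the projective ambiguity in $\sigma(x)$ is exactly the scalar left undetermined in the conditional. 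Once this dictionary is in place, the manipulations of \eqref{eq.BMS} above are routine.
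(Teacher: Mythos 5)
Your argument is correct and is exactly the route the paper intends: the proof is omitted as ``straightforward,'' but the remark at the end of Section 2.1 identifying $\sigma(x)$ with Roblin's horospheric (unstable) conditionals, combined with reading those conditionals off the product formula \eqref{eq.BMS} in Hopf coordinates (where $s$ and $b_{u^-}(x,\cdot)$ are constant along a strong unstable leaf) and transporting through $\phi_g$, is precisely your sketch, including the correct density $\xi\mapsto e^{\delta_\Gamma b_\xi(x,p_g(\xi))}$. Note only that since $\phi_g$ has image $\partial \Hc^n \setminus \{g^-\}$, the occurrences of $g^+$ in the lemma's statement should read $g^-$; your version is the internally consistent one.
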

We skip the straightforward proof.

\section{Hausdorff dimension of limit sets}
We keep the notations and definitions of the previous section.
\subsection{A lower bound for Hausdorff dimension}
Let $\Gamma$ be a non-elementary discrete subgroup of $G=\mathbf{PU}(1,n)$ ($n \geq 2$). If $\Lambda_\Gamma^c$ is the set of conical limit points of $\Gamma$, the Hausdorff dimension of $\Lambda_\Gamma^c$, with respect to the Gromov metric on the boundary at infinity, is equal to $\delta_\Gamma$. Therefore, if we denote by $\mathrm{dim}_E(\Lambda_\Gamma^c)$  the Hausdorff dimension of $\Lambda_\Gamma^c$, with respect to the spherical metric, the following inequalities hold:
\[ \frac{\delta_\Gamma}{2} \leq \mathrm{dim}_E(\Lambda_\Gamma^c) \leq \delta_\Gamma  \]
(lemma \ref{lemma.gromov.spherical}). In fact, by virtue of theorem \ref{th.balogh}, we know that 
\[ \max\left\{\frac{\delta_\Gamma}{2},\delta_\Gamma-1\right\} \leq \mathrm{dim}_E(\Lambda_\Gamma^c) \text.\]

Our aim is to make this result more precise  under some mild assumptions. Let us sketch our argument again. There is a natural fibration on $\mathcal H_n$, namely the centre $Z$ gives rise to a mapping
\[ \mathcal H_n \to \mathcal H_n /Z \text. \]
Now the main difference between Heisenberg metric and euclidean metric on $\mathcal H_n$ is ``on the fibers'', because the quotient metrics on $\mathcal H_n/Z$ are actually the same. Fiberwise, on the other hand, we have
\[ d_E(h,h')=d_H(h,h')^2 \quad(h,h' \text{ belong to the same fiber}) \text. \]

What is left is to understand how the dimension along fibers and the transversal dimension account for the dimension of Patterson-Sullivan measure itself. In general there is no exact relation but thanks to our ``Ledrappier-Young'' formula here we know that dimension is indeed additive as far as Heisenberg metric is concerned. On the other hand when Heisenberg space is endowed with euclidean metric, dimension is only super-additive -- which is why we only get an inequality in the end.

We now state and prove our theorem. 
\begin{theorem} \label{th.lower.bound}
Let $\Gamma$ be a non-elementary discrete subgroup of $G=\mathbf{PU}(1,n)$, Zariski-dense, with finite BMS measure. 
If $\mu$ is some Patterson-Sullivan measure of exponent $\delta_\Gamma$, then for $\mu$-almost every $\xi$, 
\[ \delta_\Gamma - \frac{1}{2} \Dim(\lambda,Z)  \leq \diminf(\mu,\xi) \leq \delta_\Gamma  \]
where the dimension is with respect to the spherical metric on the boundary.

The same inequality holds if we replace $\diminf(\mu,\xi)$ with $\mathrm{dim}_E(\Lambda_\Gamma^c)$.
\end{theorem}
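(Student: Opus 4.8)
The plan is to transfer everything to the Heisenberg group $N$ and compute the dimension of the conditional measures $\sigma(x)$ there. Fix a point $x=\Gamma g$ for which the almost-everywhere conclusions below hold. By Lemma \ref{lemma14} the push-forward $(\phi_g)_*\sigma(x)$ is equivalent to $\mu$ with a continuous, everywhere positive Radon--Nikodym derivative, so the local dimensions of $\mu$ and of $\sigma(x)$ correspond to one another under $\phi_g$; and by Lemma \ref{lemma.loclip}(2), $\phi_g$ is locally bilipschitz from $(N,d_E)$ to $\partial\Hc^n$ equipped with a spherical metric. Hence for $\mu$-almost every $\xi$ the spherical lower dimension $\diminf(\mu,\xi)$ equals the Euclidean lower dimension of $\sigma(x)$ at the corresponding point of $N$, and it suffices to bound the latter from below by $\delta_\Gamma-\tfrac12\Dim(\lambda,Z)$. (Recall that the analogous computation with the Heisenberg metric $d_H$, via Lemma \ref{lemma.loclip}(1), just gives back $\Dim(\lambda,N)=\delta_\Gamma$.)

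The core estimate is a super-additivity argument for $\sigma(x)$ with respect to the fibration $\pi:N\to N/Z$. Conditioning $\sigma(x)$ on a fixed compact neighbourhood $B$ of the identity to form $\nu^B(x)$ and disintegrating along $\pi$, the fibre measures are, by Proposition \ref{prop.transitivity} (exactly as in the proof of Proposition \ref{prop.monotonicity}), proportional to restrictions of the central conditional measures $\sigma_Z(\cdot)$, hence exact-dimensional of dimension $\Dim(\lambda,Z)$ for the Heisenberg metric. Since the fibres are cosets of $Z$, on them $d_E=d_H^2$ (Lemma \ref{lemma12}(2)), so with respect to $d_E$ these fibre measures have dimension $\tfrac12\Dim(\lambda,Z)$. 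The transverse factor $\theta^B(x)=\pi_*\nu^B(x)$ has lower dimension $\DimT(\lambda,N/Z)$ by Proposition \ref{prop.transverse.dimension}, and this is unaffected by the change of metric because the quotient metrics of $d_E$ and $d_H$ on $N/Z$ coincide (Lemma \ref{lemma12}(3)). Now $(N,d_E)$ is isometric to $\R^{2n-1}$, which is complete, separable and satisfies the Besicovitch covering property, and $\pi:(N,d_E)\to N/Z$ is Lipschitz; thus Proposition \ref{prop.additivity.1} applies with $\gamma=\tfrac12\Dim(\lambda,Z)$ and $\delta\equiv\DimT(\lambda,N/Z)$, and yields, for $\sigma(x)$-almost every $g'$,
\[ \diminf(\sigma(x),g')\ \geq\ \tfrac12\Dim(\lambda,Z)+\DimT(\lambda,N/Z) \]
for the Euclidean metric.

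It remains to insert the Ledrappier--Young formula. By \eqref{eq.LY.BMS} together with $\Dim(\lambda,N)=\delta_\Gamma$ we have $\DimT(\lambda,N/Z)=\delta_\Gamma-\Dim(\lambda,Z)$, so the right-hand side above equals $\delta_\Gamma-\tfrac12\Dim(\lambda,Z)$. Transporting this back through $\phi_g$ as in the first paragraph gives $\diminf(\mu,\xi)\geq\delta_\Gamma-\tfrac12\Dim(\lambda,Z)$ for $\mu$-almost every $\xi$. The upper bound $\diminf(\mu,\xi)\leq\delta_\Gamma$ is routine: since $d_E\leq C\,d_G$ (Lemma \ref{lemma.gromov.spherical}), every spherical ball $B(\xi,r)$ contains a Gromov ball $B_{d_G}(\xi,r/C)$, whence $\mu(B(\xi,r))\geq\mu(B_{d_G}(\xi,r/C))$ and, dividing by $\log r<0$, the spherical lower dimension is dominated by the Gromov pointwise dimension of $\mu$, which equals $\delta_\Gamma$ almost everywhere. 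Finally, since $\mu$ is carried by $\Lambda_\Gamma^c$ and its lower pointwise dimension is at least $\delta_\Gamma-\tfrac12\Dim(\lambda,Z)$ on a set of full $\mu$-measure, the mass distribution principle yields the same lower bound for $\mathrm{dim}_E(\Lambda_\Gamma^c)$.

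The main obstacle is the middle paragraph: for $d_E$ the Ledrappier--Young \emph{equality} fails and one is left only with super-additivity, so the whole argument hinges on correctly identifying, with respect to the Euclidean metric, the fibre dimension (halved, because $d_E=d_H^2$ along $Z$) and the transverse dimension (unchanged, because the quotient metrics agree), and on checking that these feed cleanly into the hypotheses of Proposition \ref{prop.additivity.1}. The two points requiring genuine care are the exact-dimensionality of the central conditional measures (so that the halving is legitimate) and the measure-theoretic passage from ``$\lambda$-almost every $x$'' to ``$\mu$-almost every $\xi$'' through Lemma \ref{lemma14}.
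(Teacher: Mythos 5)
Your proposal is correct and follows essentially the same route as the paper: condition $\sigma(x)$ on a compact neighbourhood, disintegrate along $\pi:N\to N/Z$, identify the fibre dimension as $\tfrac12\Dim(\lambda,Z)$ via $d_E=d_H^2$ on cosets of $Z$ and the transverse dimension as $\DimT(\lambda,N/Z)$ via the coincidence of quotient metrics, apply Proposition \ref{prop.additivity.1}, and combine with the Ledrappier--Young identity $\delta_\Gamma=\Dim(\lambda,Z)+\DimT(\lambda,N/Z)$. The only differences are presentational (you transfer to $N$ at the outset and make explicit the upper bound and the mass-distribution step, which the paper leaves implicit).
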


\begin{proof}[Proof of the theorem]
The theorem is a direct consequence of the following two facts:
\begin{itemize}
\item[A.] $\displaystyle{\delta_\Gamma = \Dim(\lambda,Z)+\DimT(\lambda,N/Z)}$
\item[B.] $\displaystyle{\diminf(\mu,\xi) \geq \frac{1}{2} \Dim(\lambda,Z)+\DimT(\lambda,N/Z)}$ for $\mu$-almost every $\xi \in \partial \Hc^n$
\end{itemize}
which we now prove in order.

To prove fact A, we just need to recall equation \ref{eq.LY.BMS} and prove the relation $\Dim(\lambda,N)=\delta_\Gamma$. This holds because the Patterson-Sullivan measure has pointwise lower dimension $\delta_\Gamma$ almost everywhere (with respect to the Gromov metric), see \cite{Ledrappier} theorem 4.3, and because of lemmas \ref{lemma14} and \ref{lemma.loclip}.1.

We now prove fact B. Fix some compact neighbourhood $B$ of the identity in $N$. For any $x \in X$, let
\[ \nu^B(x)=\frac{\sigma(x)|B}{\sigma(x)(B)} \]
and let $\theta^B(x)$ be the pushforward of $\nu^B(x)$ through the quotient mapping $\pi: N \to N/Z$. Let us disintegrate $\nu^B(x)$ above $\theta^B(x)$:
\[ \nu^B(x) = \int \mathrm{d}(\theta^B(x))(v)\ \phi_v^B(x) \]
where $\phi_v^B(x)$ is (almost surely) a Radon measure concentrated on the fiber $\pi^{-1}(v)$.

We know that for $\lambda$-almost every $x \in X$, $\theta^B(x)$ has lower pointwise dimension equal to $\DimT(\lambda,N/Z)$ almost everywhere, and $\phi_v^B(x)$ has exact dimension equal to $\Dim(\lambda,Z)$, for $\theta^B(x)$-almost every $v \in N/Z$. Recall that $N/Z$ is endowed with the quotient metric derived from Heisenberg metric, and this metric coincides with the quotient metric derived from euclidean metric. Also, fibers of the form $\pi^{-1}(v)$ are endowed with the restriction of Heisenberg metric.

Now we endow $N$ with euclidean metric, and each fiber of the form $\pi^{-1}(v)$ is endowed with the restriction of euclidean metric. According to lemma \ref{lemma12}.2, the conditional measure $\pi_v^B(x)$ is of exact dimension $\dfrac{\Dim(\lambda,Z)}{2}$ for $\lambda$-almost every $x$ and $\theta^B(x)$-almost every $v$. According to proposition \ref{prop.additivity.1} we deduce that the lower pointwise dimension of $\nu^B(x)$, with respect to the Euclidean metric, is (almost everywhere) at least equal to
\begin{equation} \DimT(\lambda,N/Z)+\frac{1}{2} \Dim(\lambda,Z)\text. \label{eq.formula.prve}
\end{equation}
By considering an increasing sequence of compact neighbourhood of identity $B_1 \subset B_2 \subset \ldots$ and using lemma \ref{lemma.loclip}.2, we see that the pointwise lower dimension of Patterson-Sullivan measure is almost everywhere greater than the previous number. 
\end{proof}

In fact, we can show that the lower inequality must be strict unless $\Gamma$ is a lattice. I am grateful to the referee for pointing out and correcting a mistake in the previous version of this proposition.
\begin{proposition}
Let $\Gamma$ be a non-elementary discrete subgroup of $G=\mathbf{PU}(1,n)$, Zariski-dense, with finite BMS measure, such that $\Lambda_\Gamma \neq \partial \Hc^n$. Then $\dim(\lambda,Z)<2$. In particular, we get the strict inequality
\[ \delta_\Gamma - 1 < \mathrm{dim}_E(\Lambda_\Gamma^c) \text. \]
\end{proposition}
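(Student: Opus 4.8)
We must show that if $\Gamma$ is Zariski-dense with finite BMS measure and $\Lambda_\Gamma \neq \partial\Hc^n$, then $\Dim(\lambda,Z)<2$. Since fact A of Theorem \ref{th.lower.bound} gives $\delta_\Gamma = \Dim(\lambda,Z)+\DimT(\lambda,N/Z)$ and the spherical lower bound is $\delta_\Gamma - \tfrac12\Dim(\lambda,Z)$, the strict inequality $\Dim(\lambda,Z)<2$ immediately yields $\delta_\Gamma-1 < \mathrm{dim}_E(\Lambda_\Gamma^c)$; so the entire content is the strict bound on the central dimension.

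**Strategy.** The number $\Dim(\lambda,Z)$ is, by definition (lemma \ref{lemma.def.dim}), the almost-sure exact dimension of the conditional measures $\sigma_Z(x)$ of the BMS measure along the centre $Z$, with respect to the restriction of the Heisenberg metric; and $Z$ is isometric to $\R$ with $Z$-metric the square root of the Euclidean one (lemma \ref{lemma12}.2), so $\Dim(\lambda,Z)=2$ would force the $Z$-conditionals to have dimension $1$ as measures on $\R$. The plan is to show that $\Dim(\lambda,Z)=2$ forces the conditional measures along $Z$ to be (almost surely) equivalent to Lebesgue measure on the fiber, hence that Patterson-Sullivan measure gives full mass to chains through $\mu$-almost every boundary point in a manner incompatible with $\Lambda_\Gamma \neq \partial\Hc^n$. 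Concretely: via lemma \ref{lemma14}, $\sigma_Z(x)$ pushes forward (through $\phi_g$) to the restriction of Patterson-Sullivan measure to the chain $\phi_g(Z)$, up to a continuous positive density. Thus $\Dim(\lambda,Z)=2$ means Patterson-Sullivan measure restricted to a $\mu$-typical chain has dimension $2$ in the Gromov metric, i.e. it is of full dimension along that chain (recall the chain is a circle of Gromov-Hausdorff dimension $2$).

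**Key steps, in order.** First I would translate $\Dim(\lambda,Z)=2$ into a statement about the $Z$-conditionals: being exact-dimensional of the maximal dimension $2$ on a space isometric (after the metric square root) to $\R$, these conditionals must have positive lower density with respect to the natural ($1$-dimensional in the $\R$-normalization) measure, and one shows they cannot be singular. Second, transporting this through $\phi_g$ and lemma \ref{lemma14}, I would conclude that for $\mu$-almost every boundary point $\xi$, the trace of $\mu$ on the chain through $\xi$ and $g^+$ has full dimension; since chains foliate a neighbourhood of $\Lambda_\Gamma$, a Fubini/disintegration argument (using fact A to force $\DimT(\lambda,N/Z)=\delta_\Gamma-2$ as large as possible as well) pins $\mu$ down to be "as spread out as possible" transverse to the central direction too. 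Third, I would invoke the rigidity principle that a Patterson-Sullivan measure of a Zariski-dense, finite-BMS group which achieves the maximal possible central dimension must have limit set equal to the whole boundary: the only way the $Z$-conditionals can be full-dimensional everywhere is if the measure $\mu$ supports all chains through $\mu$-typical points, and since $\Gamma$ is Zariski-dense the support $\Lambda_\Gamma$ would then be forced to be $A$-invariant and chain-saturated, hence all of $\partial\Hc^n$, contradicting $\Lambda_\Gamma\neq\partial\Hc^n$.

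**The main obstacle.** The delicate point is the last step: converting "the central conditionals attain the maximal dimension $2$" into "$\Lambda_\Gamma = \partial\Hc^n$". Exact-dimensionality of dimension $2$ does not by itself imply absolute continuity (a dimension can be maximal while the measure remains singular), so I expect the hard work to be in ruling out singular full-dimensional conditionals along $Z$, or, alternatively, in arguing purely at the level of supports: one must show that full central dimension along a $\mu$-positive set of chains, combined with the $A$-equivariance of the BMS measure and the Zariski-density of $\Gamma$ (to propagate the saturation in the transverse directions), forces $\Lambda_\Gamma$ to contain an open set, whence $\Lambda_\Gamma=\partial\Hc^n$ since the limit set is closed and $\Gamma$-invariant with minimal $\Gamma$-action on $\Lambda_\Gamma$. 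I would attack this via the equivariance $\sigma(\alpha_X x)=(\alpha_G)_*\sigma(x)$ from lemma \ref{lemma.alphaequiv} together with the continuity of the Radon-Nikodym density in lemma \ref{lemma14}, which should promote an almost-everywhere full-dimension statement into an everywhere open-support statement on the limit set.
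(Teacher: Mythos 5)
Your proposal follows the same overall route as the paper's proof (maximal central dimension forces the $Z$-conditionals to be invariant, hence the limit set is saturated by chains, hence equals the whole boundary), but it leaves the two pivotal steps unproven. The first is the passage from $\dim(\lambda,Z)=2$ to the statement that the conditional measures $\sigma_Z(x)$ are (almost surely) the Haar measure on $Z$, so that $\sigma(x)$ is genuinely $Z$-invariant. You correctly observe that exact-dimensionality of maximal dimension does not by itself imply absolute continuity, and you name this as ``the main obstacle'' --- but you do not resolve it, and nothing in your sketch (positive lower density, transporting through $\phi_g$, Fubini over chains) closes it. The paper resolves it by invoking a known measure-rigidity fact for leafwise conditional measures along a group action (it cites Margulis--Tomanov, Hochman, and Einsiedler--Lindenstrauss): for a system of conditionals that is equivariant under right translation and under the contracting automorphism $\alpha_G$, attaining the maximal dimension forces the conditional to be exactly Haar. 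This is the entire analytic content of the proposition; without it the argument does not start. (Your side remark that one should also force $\DimT(\lambda,N/Z)=\delta_\Gamma-2$ plays no role in the paper's proof and is not needed.)

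The second gap is the endgame. Even granting the chain-saturation statement (for $\mu$-a.e.\ $\xi$ and \emph{every} $\eta\in\Lambda_\Gamma\setminus\{\xi\}$, the chain through $\xi$ and $\eta$ lies in $\Lambda_\Gamma$), your derivation of ``$\Lambda_\Gamma$ contains an open set'' from ``Zariski density propagates the saturation in the transverse directions'' is an assertion, not an argument. The paper proceeds concretely: first it produces a single chain $C\subset\Lambda_\Gamma$; then, because $\Gamma$ is Zariski-dense, $\Lambda_\Gamma\neq C$, so $\mu(\Lambda_\Gamma\setminus C)>0$ and one can pick $\xi\notin C$ such that every chain joining $\xi$ to a point of $C$ lies in $\Lambda_\Gamma$; finally an explicit computation in Heisenberg coordinates (Goldman's parametrization of chains, together with the vertical $Z$-saturation) shows that the union of these chains is all of $\mathcal H_2$, with the general case handled by induction on $k$-chains. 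That explicit sweeping-out computation is what actually yields $\Lambda_\Gamma=\partial \Hc^n$ and hence the contradiction with $\Lambda_\Gamma\neq\partial\Hc^n$; your appeal to minimality of the $\Gamma$-action would only finish the job \emph{after} an open subset of $\Lambda_\Gamma$ has been produced, which is precisely the step you skip.
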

\begin{proof}
Let us describe briefly the rationale behind this proposition. If $\dim(\lambda,Z)=2$, then it is a classical fact that $\BMS$ must in fact be $Z$-invariant. This implies that the limit set mut be in some way saturated with respect to chains (recall that the boundary of a complex geodesic is called a chain). Since $\Gamma$ is Zariski-dense, its limit set $\Lambda_\Gamma$ may not be included in a chain, so it must really be a ``big'' set, and we show in fact that it must be the whole boundary.

Now we assume that $\dim(\lambda,Z)=2$. Then it is well-known that for $\BMS$-almost every $x \in X$ (recall that $X=\Gamma \backslash G$), $\sigma_Z(x)$ must be the Haar measure on $Z$, and $\sigma(x)$ must then be $Z$-invariant. This key fact is kind of folklore; it is proven (in slightly different languages) in \cite{MargulisTomanov}, as well as in \cite{Hoch} and \cite{EinsiedlerLindenstrauss}.

This means that for almost every $\xi$ on the boundary (with respect to the Patterson-Sullivan measure), if we send $\xi$ to infinity and identify $\partial \Hc^n \setminus \{\xi\}$ with the Heisenberg space $\HH_n$, then $\Lambda_\Gamma \setminus \{ \xi \}$ is $Z$-invariant when seen as a subset of $\HH_n$. Here we are using the fact that $\Lambda_\Gamma$ is the support of the Patterson-Sullivan measure as well as lemma \ref{lemma14}.

Let us reformulate this: for almost every $\xi$ with respect to the Patterson-Sullivan measure, and for \emph{every} $\eta \in \Lambda_\Gamma \setminus \{\xi\}$, the unique chain passing through $\xi$ and $\eta$ is contained in $\Lambda_\Gamma$.

Clearly this implies that there is some chain $C$ that is contained in the limit set $\Lambda_\Gamma$. Since $\Gamma$ is Zariski-dense, $\Lambda_\Gamma$ cannot be equal to $C$; so the Patterson-Sullivan measure of $\Lambda_\Gamma \setminus C$ is $>0$ (as $\Lambda_\Gamma$ is the support of the Patterson-Sullivan measure and $C$ is a closed subset of $\Lambda_\Gamma$).

Now we use the above reformulation again in order to pick  some point $\xi$ not in $C$ such that for \emph{any} $\eta \in C$, the chain passing through $\xi$ and $\eta$ is contained in $\Lambda_\Gamma$. 

Let us send $\xi$ at infinity and look at what is going on in the Heisenberg space $\HH_n$. The subset $L=\Lambda_\Gamma \setminus \{\xi\} \subset \HH_n$ satisfies the following properties:
\begin{itemize}
\item It is vertically saturated; \emph{i.e.} for any $h\in L$, the translate $hZ$ is contained in $L$.
\item There is  some fixed vertical chain $C=h_0 L$ and some point $h_1$ not belonging to $C$ such that every chain passing through $h_1$ and $C$ is included in $L$.
\end{itemize}

We finish the proof assuming that $n=2$; in the general setting, one would have to argue by induction on the dimension of the smallest $k$-chain contained in the limit set $\Lambda_\Gamma$ (a $k$-chain is the boundary of a $k$-complex geodesic).

Let us check that $L=\HH_2$; we identify $\HH_2$ with $\C \times \R$. In order to simplify notations, we assume that the vertical chain $Z$ is included in $L$, and also that the point $(1,0) \in \C \times \R$ belongs to $L$.  
\begin{description}
\item[Claim.] Fix some real number $y$. The circle with centre $1/2+iy$ and radius $\sqrt{1/4+y^2}$ is contained in the vertical projection of $L$ onto $\C$.
\end{description}
Define
\[ v_0= \frac{1}{2}+i \qquad s_0=-2y\qquad r_0=\sqrt{\frac{1}{4}+y^2} = \| v_0 \| \text. \]
For any $\theta \in [0,2\pi]$, let 
\[ v(\theta)=v_0+r_0 e^{i \theta}\qquad s(\theta)=s_0 - 2 \mathrm{Im}( \overline{v_0} v(\theta)) \text. \]
Then the mapping $\theta \mapsto (v(\theta),s(\theta))$ parametrizes the unique chain passing through $(0,s_0)$ and $(1,0)$. Of course the non-trivial assertion is the fact that this mapping does indeed parametrize a chain, see \cite{Goldman} equation (4.12) page 129. This proves the claim, and we deduce that $L=\partial \Hc^2 \setminus \{\xi\}$.

We have shown that $\dim(\lambda,Z)=2$ implies $\Lambda_\Gamma = \partial \Hc^n$; hence the conclusion.
\end{proof}

\begin{corollary} \label{cor.referee}
Let $\Gamma$ be a non-elementary discrete group of $G$, Zariski-dense and geometrically finite; assume furthermore that $\Gamma$ is \emph{not} a lattice.  Then $\dim(\lambda,Z)<2$. In particular, we get the strict inequality
\[ \delta_\Gamma - 1 < \mathrm{dim}_E(\Lambda_\Gamma) \text. \]
\end{corollary}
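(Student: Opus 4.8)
The plan is to deduce Corollary \ref{cor.referee} from the preceding proposition by verifying that its hypotheses are satisfied. The proposition already furnishes the strict inequality $\dim(\lambda,Z)<2$ (and hence $\delta_\Gamma - 1 < \mathrm{dim}_E(\Lambda_\Gamma^c)$) under three assumptions: that $\Gamma$ is non-elementary, Zariski-dense, with finite BMS measure, and that $\Lambda_\Gamma \neq \partial \Hc^n$. The corollary instead hypothesizes that $\Gamma$ is geometrically finite and not a lattice. So the main task is to show that these latter hypotheses imply both the finiteness of the BMS measure and, crucially, the proper inclusion $\Lambda_\Gamma \subsetneq \partial \Hc^n$.

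First I would recall that geometric finiteness guarantees finite BMS measure: this is a classical fact in the theory of geometrically finite groups (see \cite{Roblin}), so the hypotheses of the proposition concerning finiteness are automatically met. The heart of the matter is then the step $\Lambda_\Gamma \neq \partial \Hc^n$. Here I would invoke the standard dichotomy for geometrically finite groups: a geometrically finite subgroup $\Gamma$ of $G$ has limit set equal to the entire boundary $\partial \Hc^n$ if and only if $\Gamma$ is a lattice (equivalently, $\Gamma \backslash \Hc^n$ has finite volume). Since we are assuming that $\Gamma$ is \emph{not} a lattice, we conclude that $\Lambda_\Gamma$ is a proper closed subset of $\partial \Hc^n$. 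With this established, the hypotheses of the proposition hold, and it yields $\dim(\lambda,Z)<2$ directly.

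Finally, I would combine this with Theorem \ref{th.lower.bound}. That theorem gives, for $\mu$-almost every $\xi$, the inequality $\delta_\Gamma - \frac{1}{2}\dim(\lambda,Z) \leq \diminf(\mu,\xi)$, and the same bound for $\mathrm{dim}_E(\Lambda_\Gamma^c)$. Substituting the strict bound $\dim(\lambda,Z)<2$ into $\delta_\Gamma - \frac{1}{2}\dim(\lambda,Z)$ produces $\delta_\Gamma - 1 < \delta_\Gamma - \frac{1}{2}\dim(\lambda,Z) \leq \mathrm{dim}_E(\Lambda_\Gamma^c)$, which is the desired strict inequality. Since $\Lambda_\Gamma^c \subseteq \Lambda_\Gamma$ and both have the same Euclidean Hausdorff dimension here (the conical limit set has full Patterson-Sullivan measure by the finiteness of the BMS measure), the inequality passes to $\mathrm{dim}_E(\Lambda_\Gamma)$, completing the argument.

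The step I expect to be the main (if modest) obstacle is the careful citation of the geometric-finiteness dichotomy $\Lambda_\Gamma = \partial \Hc^n \iff \Gamma \text{ is a lattice}$. In real hyperbolic space this is entirely standard, but in the complex hyperbolic setting one must make sure the appropriate reference (Bowditch's characterization of geometric finiteness, as adapted to $\mathbf{PU}(1,n)$) covers this equivalence; once cited, the remaining deductions are immediate consequences of the proposition and Theorem \ref{th.lower.bound}.
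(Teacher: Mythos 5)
Your proposal is correct and follows essentially the same route as the paper: both reduce the corollary to the preceding proposition together with the standard dichotomy that a geometrically finite group has $\Lambda_\Gamma = \partial \Hc^n$ (equivalently, maximal critical exponent) if and only if it is a lattice, the paper merely phrasing this as the contrapositive ($\dim(\lambda,Z)=2 \Rightarrow \Lambda_\Gamma = \partial\Hc^n \Rightarrow \Gamma$ is a lattice). Your additional remarks --- that geometric finiteness supplies the finite BMS measure hypothesis, and that the bound passes from $\Lambda_\Gamma^c$ to $\Lambda_\Gamma$ (here simply by monotonicity of Hausdorff dimension, or because the parabolic limit set is countable) --- match what the paper records.
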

\begin{proof}
  Recall that geometrical finiteness of $\Gamma$ implies that the limit set $\Lambda_\Gamma$ is the union of the conical limit set $\Lambda_\Gamma^c$ and the parabolic limit set $\Lambda_\Gamma^p$, the latter being countable (hence $\Lambda_\Gamma$ and $\Lambda_\Gamma^c$ have the same Hausdorff dimension); also, the Hausdorff dimension of the limit set $\Lambda_\Gamma$ (with respect to the Gromov metric) is equal to the critical exponent $\delta_\Gamma$.

  Now, by the previous proposition, we know that if $\dim(\lambda,Z)=2$, the limit set must be equal to the boundary $\partial \Hc^n$; thus $\delta_\Gamma$ is equal to $2n-1$ and this implies that $\Gamma$ (which is geometrically finite) is a lattice.
\end{proof}

Before ending this paragraph, let us remark that in view of the main result in \cite{LedXie}, it seems likely that $\dim(\lambda,Z)$ should be strictly less than $\delta_\Gamma$ when $\Gamma$ is Zariski-dense (and has finite BMS measure) but I have not been able to adapt the methods of Ledrappier and Xie to prove this result.

\subsection{Schottky subgroups in good position}
We will now describe a class of Schottky subgroups of $G=\mathbf{PU}(1,n)$ for which, with the notations of theorem \ref{th.lower.bound}, $\dim(\lambda,Z)=0$. As an immediate corollary, the dimension of the limit set associated with such a Schottky subgroup, with respect to the spherical metric, is equal to $\delta_\Gamma$. Recall that a Schottky subgroup is convex cocompact, so that any limit point is a conical limit point.

Let $W \subset G$ be a finite set of hyperbolic transformations, at least two, and, for each $w \in W\cup W^{-1}$, let $B(w)$ be an open subset of $\partial \Hc^n$. We make the following assumptions:
\begin{enumerate}
\item If $w \in W$, then $w^{-1}$ does not belong to $W$.
\item The closures $\overline{B(w)}$ are pairwise disjoint.
\item For any $w \in W \cup W^{-1}$, 
\[ w \left( \partial \Hc^n \setminus B(w^{-1})\right) \subset B(w) \text. \]
\item No chain passes through three of these open subsets $B(w)$.
\end{enumerate}
Recall that a chain is the boundary of a complex geodesic.

It is easy to construct a set $W$ satisfying these hypotheses. First, choose some hyperbolic isometries $w_1',\ldots,w_k'$ such that if $i \neq j$, the chain passing through the fixed points of $w_i'$ does not pass through a fixed point of $w_j'$. Then let $w_i = (w_i')^n$ ($1 \leq i \leq k$), and the set $W=\{w_1,\ldots,w_k\}$ suits our needs, if only $n$ is great enough.

Now let $\Gamma$ be the subgroup of $G$ generated by $W$. It is easy to check that $\Gamma$ is the free group of basis $W$, $F(W)$. Therefore for any element $\gamma \in \Gamma$, we may speak of the reduced decomposition and the length of $\gamma$ (with respect to $W$). We say that $\Gamma$ is a Schottky subgroup \emph{in good position}.

If $f \in \Gamma$ has reduced decomposition $f=f_1 \cdots f_p$ (\emph{i.e.} $f_i f_{i+1} \neq e$ for $i < p$ and $f_i \in W \cup W^{-1}$ for $i \leq p$), we denote by $B(f)$ the set
\[ f_1 \cdots f_{n-1} B(f_n) \text. \]
If $f,g \in \Gamma$ are distinct elements (not equal to identity), the sets $B(f)$ and $B(g)$ have empty intersection. 

If $f,g \in \Gamma$ have reduced decomposition $f_1 \cdots f_p$, $g_1 \cdots g_q$ respectively, we denote by $f \wedge g$ the longest word $h$ with reduced decomposition $h_1 \cdots h_r $ such that $f_i = g_i = h_i$ for $1 \leq i \leq r$. If $f \wedge g$ is the empty word, we say that $f$ and $g$ are disjoint words; on the other hand, if $f \wedge g = f$, we say that $f$ is a prefix of $g$. If $f$ is a prefix of $g$, then $B(g) \subset B(f)$.

\begin{lemma}
Let $f,g,h \in \Gamma$ be such that none of them is a prefix of an other. Then the sets $B(f),B(g),B(h)$ are pairwise disjoints, and no chain passes through these three sets.
\end{lemma}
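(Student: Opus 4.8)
The plan is to treat the two assertions separately: pairwise disjointness is essentially formal and follows from the tree structure of reduced words, while the statement about chains is the real content and requires a reduction to hypothesis 4.

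For disjointness I would argue directly. Given two of the elements, say $f$ and $g$, neither a prefix of the other, let $p = f \wedge g$ be their longest common prefix; by hypothesis $p$ is a \emph{proper} prefix of each, so in reduced form $f = p s \cdots$ and $g = p s' \cdots$ with $s \neq s'$ two distinct letters of $W \cup W^{-1}$. Using the stated inclusion $B(w') \subset B(w)$ whenever $w$ is a prefix of $w'$, one gets $B(f) \subset p B(s)$ and $B(g) \subset p B(s')$. Since $\overline{B(s)}$ and $\overline{B(s')}$ are disjoint (hypothesis 2) and $p$ acts as a bijection of $\partial \Hc^n$, the translates $p B(s)$ and $p B(s')$ are disjoint, hence so are $B(f)$ and $B(g)$. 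Applying this to the three pairs yields pairwise disjointness.

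For the statement about chains the key geometric input is that every $\gamma \in \Gamma \subset \mathbf{PU}(1,n)$ maps chains to chains, since it acts by a biholomorphic isometry carrying complex geodesics to complex geodesics. The idea is then to conjugate the configuration back to a ``base'' triple controlled by hypothesis 4. Let $u = f \wedge g \wedge h$ be the common prefix of all three and look at the three letters $s_f, s_g, s_h$ appearing immediately after $u$ in $f$, $g$, $h$. If these are pairwise distinct, then $B(f) \subset u B(s_f)$, $B(g) \subset u B(s_g)$, $B(h) \subset u B(s_h)$, so after translating by $u^{-1}$ a chain through $B(f),B(g),B(h)$ would become a chain through the three distinct sets $B(s_f), B(s_g), B(s_h)$, contradicting hypothesis 4; this is the favourable case.

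The main obstacle is the remaining case, where exactly two of these letters coincide — say $f$ and $g$ both continue with $a$ after $u$ while $h$ continues with $c \neq a$. Then $u^{-1}$ places both $B(f)$ and $B(g)$ inside the \emph{same} set $B(a)$ and $B(h)$ inside $B(c)$, so hypothesis 4, which demands three \emph{distinct} $B(w)$'s, does not apply directly. To get around this I would induct on the total length $|f|+|g|+|h|$. Writing $f \wedge g = u a q$ and translating the whole configuration by $(ua)^{-1} \in \Gamma$, one checks that the images are again sets $B(F), B(G), B(H)$ with $F = (ua)^{-1}f$, $G = (ua)^{-1}g$, $H = (ua)^{-1}h$, that none of $F,G,H$ is a prefix of another, and that the total length has strictly decreased (since $(ua)^{-1}$ shortens $f$ and $g$ by $|u|+1$ while lengthening $h$ only by $1-|u|$). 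As $(ua)^{-1}$ sends chains to chains, a chain through $B(f),B(g),B(h)$ would give one through $B(F),B(G),B(H)$, so the inductive hypothesis — equivalently, iterating the reduction until the three post-$u$ letters become distinct and the favourable case applies — finishes the argument. The only points needing care are the identities $B((ua)^{-1}f)=(ua)^{-1}B(f)$ (immediate, as $ua$ is a prefix of $f$, and likewise for $g$) and the slightly less obvious $B((ua)^{-1}h)=(ua)^{-1}B(h)$, which holds because $u$ is a prefix of $h$ and left multiplication by the single letter $a^{-1}$ produces no cancellation with the leading letter $c \neq a$.
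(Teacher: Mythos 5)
Your proof is correct and follows essentially the same route as the paper's: an induction on $l(f)+l(g)+l(h)$ that strips common prefixes until the three words begin with pairwise distinct letters of $W\cup W^{-1}$, at which point hypothesis 4 applies. You are in fact somewhat more explicit than the paper in proving the pairwise disjointness and in justifying the identity $B(k^{-1}h)=k^{-1}B(h)$ via the no-cancellation observation.
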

\begin{proof}
We argue by contradiction. Assume $C$ is a chain passing through $B(f),B(g)$ and $B(h)$. We may assume that $f \wedge g \wedge h$ is the empty word: if not, we replace $C$ with $(f \wedge g \wedge h)^{-1} C$, $f$ with $(f \wedge g \wedge h)^{-1} f$, \emph{etc}. 

If $f,g,h$ are pairwise disjoints, the conclusion is a direct consequence of hypothesis 4 above.

Now assume for example that $k = f \wedge g$ is not the empty word. Note that $k$ is not equal to $f,g$ or $h$ because none of these words is a prefix of an other. Then $k^{-1} C$ is a chain passing through $B(k^{-1} f)$, $B(k^{-1}g)$ and $B(k^{-1} f)$. If $l(w)$ is the length of the element $w \in \Gamma$, we have 
\[ l(k^{-1}f)+l(k^{-1}g)+l(k^{-1}h) < l(f)+l(g)+l(h) \]
and we may argue by induction on $l(f)+l(g)+l(h)$ to prove the lemma.
\end{proof}

\begin{lemma}
Any chain contains at most two points of $\Lambda_\Gamma$.
\end{lemma}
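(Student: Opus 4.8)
The plan is to exploit the standard identification of $\Lambda_\Gamma$ with the space of infinite reduced words over $W \cup W^{-1}$ and to derive a contradiction from the preceding lemma, which forbids a chain from meeting three of the sets $B(f)$ simultaneously.

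First I would set up how limit points sit inside the sets $B(f)$. Given an infinite reduced word $\mathbf{w}=w_1 w_2\cdots$, write $f^{(p)}=w_1\cdots w_p$ for its prefixes. Since $w_{p+1}\neq w_p^{-1}$, hypothesis 2 gives $\overline{B(w_{p+1})}\cap\overline{B(w_p^{-1})}=\emptyset$, so $\overline{B(w_{p+1})}\subset\partial\Hc^n\setminus B(w_p^{-1})$; hypothesis 3 then yields $w_p\overline{B(w_{p+1})}\subset B(w_p)$, and translating by $w_1\cdots w_{p-1}$ gives the strict nesting
\[ \overline{B(f^{(p+1)})}\subset B(f^{(p)}). \]
These nested compacta have diameters tending to $0$ (the standard contraction property of Schottky generators), so their intersection is a single point and the correspondence between infinite reduced words and points of $\Lambda_\Gamma$ is a bijection. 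The feature I really need is the one made explicit by the strict nesting: every $\xi\in\Lambda_\Gamma$ lies in the \emph{open} set $B(f^{(p)})$ for every $p$, since $\xi\in\overline{B(f^{(p+1)})}\subset B(f^{(p)})$.

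Now suppose, for contradiction, that a chain $C$ contains three distinct points $\xi,\eta,\zeta\in\Lambda_\Gamma$. By the bijection above they have pairwise distinct infinite reduced words, so I may choose an integer $p$ exceeding, for each of the three pairs of words, the first index at which they disagree; then the length-$p$ prefixes $f,g,h$ of $\xi,\eta,\zeta$ are pairwise distinct. Being distinct words of equal length, none of $f,g,h$ is a prefix of another, so the preceding lemma applies: no chain meets all three of $B(f),B(g),B(h)$. But $\xi\in B(f)$, $\eta\in B(g)$, $\zeta\in B(h)$ by the previous paragraph, and $\xi,\eta,\zeta\in C$, so $C$ meets $B(f)$, $B(g)$ and $B(h)$ --- a contradiction. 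Hence $C$ contains at most two points of $\Lambda_\Gamma$.

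The one point requiring care is open-versus-closed: the preceding lemma rules out chains through the \emph{open} sets $B(f)$, whereas limit points a priori only sit in closures, so the argument hinges on the strict nesting $\overline{B(f^{(p+1)})}\subset B(f^{(p)})$ (rather than mere membership in $\overline{B(f^{(p)})}$) to place $\xi,\eta,\zeta$ in the open sets. The remaining steps are routine ping-pong bookkeeping already developed in this subsection.
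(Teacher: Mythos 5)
Your proof is correct and follows essentially the same route as the paper's: code each limit point by its nested sequence of sets $B(\xi(n))$, pick a common length at which the three prefixes become pairwise distinct (hence none a prefix of another), and invoke the preceding lemma to get a contradiction. The only difference is that you spell out the strict nesting $\overline{B(f^{(p+1)})}\subset B(f^{(p)})$ to justify that limit points lie in the \emph{open} sets $B(f^{(p)})$, a point the paper takes for granted when asserting the coding.
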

\begin{proof}
Since $\Gamma$ is free, for any $\xi \in \Lambda_\Gamma$ and any integer $n \geq 1$, there is a unique word $\xi(n) \in \Gamma$ of length $n$ such that $\xi$ belongs to $B(\xi(n))$. We let $\xi(n)=\xi_1 \cdots \xi_n$, where $\xi_i$ belongs to $W \cup W^{-1}$ ($1 \leq i \leq n$) and $\xi_i \xi_{i+1} \neq e$.

Now assume on the contrary that $\xi,\eta,\zeta$ are three distinct points of $\Lambda_\Gamma$ that belong to some chain $C$. Since $\xi,\eta,\zeta$ are different, we may find an index $n$ such that the three words
\[ f=\xi_1 \cdots \xi_n,\quad g=\eta_1 \cdots \eta_n,\quad h=\zeta_1 \cdots \zeta_n \]
are pairwise different. By virtue of the previous lemma, no chain passes through $B(f),B(g)$ and $B(h)$. This contradicts the fact that $C$ does pass through these sets.
\end{proof}

We are now able to prove the following fact.
\begin{proposition}
Let $\Gamma$ be a Schottky subgroup in good position of $\mathbf{PU}(1,n)$ ($n \geq 2$). Let $\lambda$ be the BMS measure on $\Gamma \backslash G$ and recall that $Z$ is the centre of $N$. Then
\[ \Dim(\lambda,Z)=0 \text. \]
\end{proposition}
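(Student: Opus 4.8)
The goal is to show that the central dimension $\Dim(\lambda,Z)$ vanishes, where $\lambda$ is the BMS measure and $Z$ is the centre of $N$. By definition, $\Dim(\lambda,Z)$ is the exact dimension of the conditional measures $\sigma_Z(x)$ on the one-dimensional (real) central subgroup $Z \cong \mathbf{R}$, measured with respect to the (restricted) Heisenberg metric. Since $Z$ is one-dimensional, $\Dim(\lambda,Z) \in [0,2]$ (recall $Z$ has Hausdorff dimension $2$ for the restricted Heisenberg metric), and showing it is $0$ amounts to showing that these conditional measures are, in a dimensional sense, as singular as possible --- concentrated on sets of dimension zero. The natural strategy is to transport the $Z$-invariance structure of $\sigma_Z(x)$ to the boundary via lemma \ref{lemma14}, and to exploit the combinatorial rigidity established in the two preceding lemmas: \emph{any chain contains at most two points of $\Lambda_\Gamma$}.

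\textbf{Key steps.} First I would use lemma \ref{lemma14} to identify, for $\lambda$-almost every $x = \Gamma g$, the conditional measure $\sigma_Z(x)$ (pushed to the boundary by $\phi_g$) with the restriction of the Patterson-Sullivan measure $\mu$ to a single \emph{chain} through $g^+$. Indeed, $\phi_g$ maps the cosets of $Z$ in $N$ precisely onto chains passing through $g^+$ (see the description of the fibration $N \to N/Z$ in section 5.2), so the central conditional $\sigma_Z(x)$ corresponds, on the boundary, to the way $\mu$ distributes along one such chain. Second, the crucial dimensional lemma just proved says that any chain meets $\Lambda_\Gamma$ in at most two points. Since $\mu$ is atomless and supported on $\Lambda_\Gamma$, the restriction of $\mu$ to any given chain assigns mass only to a set of at most two points --- hence is the zero measure. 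Pulling this back, $\sigma_Z(x)$ must be supported on a $Z$-negligible set (a discrete set of at most two points in $Z \cong \mathbf{R}$), so its exact dimension is $0$. I would then invoke lemma \ref{lemma.def.dim} to conclude that this almost-sure value is exactly $\Dim(\lambda,Z)$.

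\textbf{The main obstacle.} The delicate point is reconciling two perspectives. Lemma \ref{lemma14} concerns the \emph{full} conditional measure $\sigma(x)$ on $N$ and its push-forward being equivalent to $\mu$; I must correctly extract from this the behaviour of the \emph{central} conditional $\sigma_Z(x)$, which by the transitivity of disintegration (proposition \ref{prop.transitivity}) is the disintegration of $\sigma(x)$ along the fibration $N \to N/Z$. The subtle issue is that $\mu$ restricted to a chain being supported on at most two points tells us the central conditionals have support of cardinality $\leq 2$ \emph{for almost every fibre}, and I must verify this genuinely forces dimension $0$ rather than merely a small positive dimension --- but a measure supported on a set of at most two points in $\mathbf{R}$ is a (possibly weighted) sum of at most two Dirac masses, whose pointwise dimension at any atom is manifestly $0$. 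The bookkeeping of null sets --- ensuring the equivalence in lemma \ref{lemma14} and the two-point property of chains hold simultaneously for $\lambda$-almost every $x$ --- is the part requiring care, but it is routine once the geometric picture is pinned down.

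\begin{proof}
Recall that, by lemma \ref{lemma.def.dim}, the quantity $\Dim(\lambda,Z)$ is the $\lambda$-almost sure exact dimension of the central conditional measures $\sigma_Z(x)$ on $Z$, computed with respect to the restriction of the Heisenberg metric. It therefore suffices to show that, for $\lambda$-almost every $x \in X = \Gamma \backslash G$, the measure $\sigma_Z(x)$ is supported on a set of at most two points.

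Fix $x = \Gamma g$ in the conull set where lemma \ref{lemma14} applies, so that the push-forward $(\phi_g)_* \sigma(x)$ is equivalent to the Patterson-Sullivan measure $\mu$ (restricted to $\partial \Hc^n \setminus \{g^+\}$). By proposition \ref{prop.transitivity}, the central conditional $\sigma_Z(x)$ is obtained by disintegrating $\sigma(x)$ along the fibration $\pi : N \to N/Z$; under the homeomorphism $\phi_g$, the fibres of $\pi$, which are the cosets of $Z$ in $N$, are mapped onto the chains of $\partial \Hc^n$ passing through $g^+$ (see subsection 5.2). Consequently, the measure $(\phi_g)_* \sigma_Z(x)$ is, up to the continuous positive Radon-Nikodym factor furnished by lemma \ref{lemma14}, the restriction of $\mu$ to a chain $C$ through $g^+$.

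Now invoke the previous lemma: any chain contains at most two points of $\Lambda_\Gamma$. Since $\mu$ is supported on $\Lambda_\Gamma$, the restriction $\mu|_C$ is supported on the set $C \cap \Lambda_\Gamma$, which has at most two elements. Pulling back through $\phi_g$, we conclude that $\sigma_Z(x)$ is supported on a set of at most two points of $Z$.

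A measure on $Z \cong \mathbf{R}$ supported on at most two points is a finite combination of Dirac masses, and its pointwise dimension at every point of its support is $0$. Hence $\sigma_Z(x)$ is exact-dimensional of dimension $0$ for $\lambda$-almost every $x$, and therefore $\Dim(\lambda,Z) = 0$.
\end{proof}
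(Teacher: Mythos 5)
Your proof is correct and follows essentially the same route as the paper: the cosets of $Z$ correspond to chains through the point at infinity, any chain meets $\Lambda_\Gamma$ in at most two points, hence the central conditionals are atomic and $\Dim(\lambda,Z)=0$. Two small cautions: the intermediate claim that $(\phi_g)_*\sigma_Z(x)$ \emph{is} the restriction of $\mu$ to a chain cannot be taken literally (that restriction is the zero measure, while the conditional is not --- only the support containment $\mathrm{supp}\,\sigma_Z(x)\subset \phi_g^{-1}(C\cap\Lambda_\Gamma)$ is true, and that is all your argument actually uses), and the chains in question pass through $g^-$, the point removed by $\phi_g$, so the support is in fact a single point rather than two.
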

\begin{proof}
Let $\sigma : X \to \Mrad(N)$ be the mapping obtained by disintegrating $\lambda$ along $N$ on the right. For $\lambda$-almost every $x=\Gamma g$, the support of $\sigma(x)$ is the inverse image, by the mapping $\phi : N \to \partial \Hc^n \setminus \{g^-\}$, $n \mapsto (gn)^+$, of the set $\Lambda_\Gamma \setminus \{g^-\}$. For any $n \in N$, the translate $nZ$ is the inverse image, by $\phi$, of the chain passing through $(gn)^+$ and $g^-$.

Now, for $\lambda$-almost every $x = \Gamma g$, and $\sigma(x)$-almost every $n \in N$, both points $g^-$ and $(gn)^+$ belong to the limit set. Consequently, the chain passing through these points does not contain any other point of $\Lambda_\Gamma$, so that $nZ$ meets the support of $\sigma(x)$ only at $n$. 

In other words, $\sigma(x)$ is concentrated on a Borel section of the quotient mapping $N \to N/Z$. Therefore, the conditional of $\lambda$ along $Z$ are Dirac measures, and in particular $\Dim(\lambda,Z)=0$.
\end{proof}

\begin{corollary} \label{cor.schottky}
The Hausdorff dimension of $\Lambda_\Gamma$, with respect to the spherical metric, is equal to $\delta_\Gamma$.
\end{corollary}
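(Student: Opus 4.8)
The plan is to read the corollary off from Theorem~\ref{th.lower.bound} as soon as the preceding proposition has supplied the value $\Dim(\lambda,Z)=0$. So the first thing I would do is check that $\Gamma$ meets the hypotheses of Theorem~\ref{th.lower.bound}: a Schottky subgroup in good position is convex cocompact, hence geometrically finite, so it carries a finite BMS measure and the Patterson--Sullivan density of exponent $\delta_\Gamma$ is available; and one must know that $\Gamma$ is Zariski-dense, so that this density is essentially unique and the whole apparatus of Section~\ref{section.dimension} (disintegration, the Ledrappier--Young formula, Lemma~\ref{lemma14}) applies. This Zariski-density is precisely where the good-position hypotheses, in particular the one forbidding a chain through three of the blocks $B(w)$, are meant to do the work.

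Granting this, the preceding proposition gives $\Dim(\lambda,Z)=0$, and I would simply substitute this into Theorem~\ref{th.lower.bound}. The lower bound there becomes $\delta_\Gamma-\frac{1}{2}\Dim(\lambda,Z)=\delta_\Gamma$, so that for $\mu$-almost every $\xi$ one gets $\diminf(\mu,\xi)=\delta_\Gamma$ and, by the last assertion of that theorem, $\mathrm{dim}_E(\Lambda_\Gamma^c)\geq\delta_\Gamma$ as well (the upper bound $\leq\delta_\Gamma$ is built into the statement). For completeness the reverse inequality is the general one: since the Gromov-metric Hausdorff dimension of $\Lambda_\Gamma^c$ equals $\delta_\Gamma$ by Bishop--Jones, Lemma~\ref{lemma.gromov.spherical} (equivalently Theorem~\ref{th.balogh}) yields $\mathrm{dim}_E(\Lambda_\Gamma^c)\leq\delta_\Gamma$. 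Combining the two gives $\mathrm{dim}_E(\Lambda_\Gamma^c)=\delta_\Gamma$.

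Finally, to pass from the conical limit set to the full limit set I would invoke convex cocompactness once more: every limit point of a Schottky group is conical, so $\Lambda_\Gamma=\Lambda_\Gamma^c$, and therefore $\mathrm{dim}_E(\Lambda_\Gamma)=\delta_\Gamma$, which is the assertion. The arithmetic here is entirely formal — a single substitution into results already established — so the only genuinely substantive point concealed in the argument is the verification that good position forces Zariski-density (needed both to license Theorem~\ref{th.lower.bound} and to guarantee the uniqueness of $\mu$); that is the step I would expect to demand the most care, while everything downstream of it is immediate.
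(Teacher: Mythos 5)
Your argument is correct and is essentially the paper's own: the corollary is read off by substituting $\Dim(\lambda,Z)=0$ from the preceding proposition into Theorem~\ref{th.lower.bound}, combining with the general upper bound $\mathrm{dim}_E(\Lambda_\Gamma^c)\leq\delta_\Gamma$, and using convex cocompactness to identify $\Lambda_\Gamma$ with $\Lambda_\Gamma^c$. Your observation that Zariski-density (needed to invoke Theorem~\ref{th.lower.bound}) must be extracted from the good-position hypotheses is well taken --- the paper leaves that verification implicit --- but it does not alter the route of the proof.
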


Before ending this paragraph, let us remark that being in good position is not a \emph{generic} condition. Indeed it is clear by definition that a Schottky subgroup in bad position will remain so after a small perturbation, because in the definition the sets $B(w)$ are taken to be open. On the other hand, being in good position is an \emph{open} condition since we may replace the sets $B(w)$ with their closure (maybe after shrinking them a little bit).

\subsection{Discussion}
First, let us discuss our hypotheses. 

Zariski density is absolutely crucial. Indeed, let us say, for example, that $n=2$. Pick some one-dimensional complex geodesic $H \subset \Hc^2$. There is a bijective mapping $\phi : H \to \mathbf{H}_{\mathbf{R}}^1$ onto the real hyperbolic line, such that
\[ d_{\mathbf{H}_{\mathbf{R}}^1}(\phi(\xi),\phi(\eta))=\frac{1}{2} d_{\Hc^2}(\xi,\eta) \]
(where the letter $d$ denotes hyperbolic metric on both spaces). As a consequence, if $\Gamma$ is some uniform lattice of $\mathbf{PSL}_2(\mathbf{C})$, we may identify $\Gamma$ with some discrete subgroup of $ \mathbf{PU}(1,2)$ with finite Bowen-Margulis-Sullivan measure, such that $\delta_\Gamma=2$ and $\Lambda_\Gamma=H$. The Hausdorff dimension of $\Lambda_\Gamma$ with respect to the \emph{spherical} metric is then equal to $1$. Of course, such a $\Gamma$ is not Zariski-dense (it is contained in some conjugate of $\mathbf{PU}(1,1)$).

On the other hand, it is not clear to the author wether  finiteness of the Bowen-Margulis-Sullivan measure is a mere technical hypothesis. It would be interesting to find out what can happen if we assume only divergence of the Poincaré series.

Now we discuss our results and ask a few questions. 

As was already mentioned in the introduction to this paper, corollary \ref{cor.schottky} has an interesting consequence: if $\Gamma$ is a well-positionned Schottky subgroup of $\mathbf{PU}(1,n)$, one must have
\[ \delta_\Gamma \leq 2(n-1) \text. \]
This raises the following 
\begin{questions}
Let $\Gamma$ be some Zariski-dense Schottky subgroup of $\mathbf{PU}(1,n)$. 
\begin{enumerate}
\item Does the previous inequality hold?
\item Is the Hausdorff dimension of $\Lambda_\Gamma$ with respect to the \emph{spherical} metric equal to $\delta_\Gamma$?
\item Does  the dimension of the Bowen-Margulis-Sullivan measure along the centre $Z$ have to be zero?
\end{enumerate}
\end{questions}
Recall that $Z$ is the centre of the Heisenberg group. Of course, 3 implies 2 and 2 implies 1. I would like to underline the fact that these questions are genuine, and I have absolutely no heuristics whatsoever that would imply that any of these assertions should be considered plausible.

It is worth noting that another family of subsets of the Heisenberg space that exhibit coincidence of dimension with respect to both Euclidean and Heisenberg metrics was constructed by Balogh \emph{et al} in \cite{BaloghTyson} and \cite{BaloghLifts}.

Now let us pass to general,  non necessarily Schottky subgroups. Let $\Gamma$ be a Zariski dense discrete subgroup of $\mathbf{PU}(1,n)$ with finite Bowen-Margulis-Sullivan measure. Theorem \ref{th.lower.bound} makes it plain that in order to understand the Hausdorff dimension (with respect to the spherical metric) of $\Lambda_\Gamma$, one would like to be able to compute the dimension of the Bowen-Margulis-Sullivan measure along the centre $Z$, $\Dim(\lambda,Z)$. Because of the Ledrappier-Young formula, this is equivalent to computing the dimension of the Bowen-Margulis-Sullivan measure transverse to the centre $Z$, $\DimT(\lambda,N/Z)$. Indeed, recall that the Ledrappier-Young formula yields
\[ \delta_\Gamma = \Dim(\lambda,Z)+\DimT(\lambda,N/Z) \text. \]

\begin{questions}
Let $\Gamma$ be some Zariski dense discrete subgroup of $\mathbf{PU}(1,n)$ with finite Bowen-Margulis-Sullivan measure. Let $\delta_\Gamma$ be the growth exponent of $\Gamma$.
\begin{enumerate}
\item Is it true that if $\delta_\Gamma \leq 2(n-1)$, then $\Dim(\lambda,Z)=0$?
\item Is it true that if $\delta_\Gamma > 2(n-1)$, then $\DimT(\lambda,N/Z)=2(n-1)$?
\end{enumerate}
\end{questions}
If the answer to both questions was affirmative, this would, by virtue of theorem \ref{th.lower.bound}, and thanks to the formula of Balogh \emph{et al}, imply the following formula, where we denote by $\dim_E(\Lambda_\Gamma^c)$ the Hausdorff dimension of the conical limit set with respect to the spherical metric:
\begin{equation} \tag{*} \dim_E(\Lambda_\Gamma^c) = \left \{ \begin{array}{ccc} \delta_\Gamma & \text{if} & \delta_\Gamma \leq 2(n-1) \\ 2(n-1)+\frac{1}{2} (\delta_\Gamma-2(n-1)) & \text{if} & \delta_\Gamma > 2(n-1)
    \end{array} \right.
\end{equation}

What this (hypothetical) formula means is that the Hausdorff dimension of the (conical) limit set with respect to the spherical metric is really as high as it can be. Geometrically, this would imply in some way that the limit set is as nearly transverse to the centre $Z$ as possible.

To put differently, we are asking wether  there is a ``transverse measure saturation'' phenomenon going on; to be more specific, does it hold that when the growth exponent is ``small'' (\emph{i.e.} $\delta_\Gamma \leq 2(n-1)$) the transverse measure accounts for all of the dimension (\emph{i.e.} the dimension along $Z$ is zero), whereas as the growth exponent gets larger, the transverse measure is in some way ``saturated'' (perhaps absolutely continuous with respect to the Lebesgue measure on the quotient space $N/Z$) and the measure along $Z$ is forced to take its share of the dimension?

In pondering these questions, one should bear in mind related examples:
\begin{itemize}
\item In the Euclidean $\R^n$ space, if $\mu$ is some exact-dimensional (say) probability measure of dimension $\delta$, and if we randomly pick some $k$-plane $P \subset \R^n$ (with respect to the standard Lebesgue measure on the space of $k$-planes in $\R^n$), then the orthogonal projection of $\mu$ onto $P$ has dimension
\[ \inf\{ \delta,k \} \]
and furthermore if we disintegrate $\mu$ above $P$ (along the orthogonal projection), the conditional measures have dimension $0$ if $\delta \leq k$ and $\delta - k$ otherwise. This is basically Marstrand's theorem, see \cite{Mattila} or \cite{thesis}.
\item Falconer's formula giving the almost sure Hausdorff dimension of self-affine subsets of the Euclidean space \cite{Falconer2}. If we pretend that when the boundary at infinity is endowed with the spherical metric, the elements of $\Gamma$ are affine transformations of the euclidean space $\R^{2n-2}$, and try and apply Falconer's formula, in order to compute the Hausdorff dimension of the (conical) limit set with respect to the spherical metric, we do end up with the above formula $(*)$.
\item In \cite{LedLin}, Ledrappier and Lindenstrauss exhibit a related ``transverse measure saturation'' phenomenon (though for some reason in that setting the phenomenon holds only in small dimension).
\end{itemize}
I reckon that I have no idea wether there are actually non-trivial (\emph{i.e.} non lattices) $\Gamma$ that satisfy $\delta_\Gamma > 2(n-1)$. In the quaternionic hyperbolic setting, it is known that there is in fact a gap, see \cite{Corlette}.

\begin{question}
Can we find a Zariski dense discrete subgroup $\Gamma$ of $\mathbf{PU}(1,n)$ with finite Bowen-Margulis-Sullivan measure, that is not a lattice, and such that
\[ \dim(\lambda,Z)>0 \quad \text{?} \]
\end{question}

Let us note as well the following  question.
\begin{question}
Let $\Gamma$ be some Zariski dense discrete subgroup of $\mathbf{PU}(1,n)$ with finite Bowen-Margulis-Sullivan measure. Is the Hausdorff dimension of the conical limit set with respect to the spherical measure on the boundary equal to
\[ \delta_\Gamma - \frac{1}{2} \dim(\lambda,Z) \quad \text{?}\]
\end{question}
This is weaker than question 3 above.

Finally, I would like to mention that in the real hyperbolic setting, I am able to compute the dimension of the Bowen-Margulis-Sullivan measure along (connected) subgroups of the unipotent group $N$ (where $\mathbf{PO}(1,n)=KAN$ is an Iwasawa decomposition). This stands in stark contrast to the complex hyperbolic setting we have been looking at in this paper, where computing $\dim(\lambda,Z)$ seems more difficult. The ``transverse measure saturation'' phenomenon we mentioned earlier does take place in the real hyperbolic setting. In fact this is essentially a consequence of Marstrand's theorem. This problem is related to a paper of Oh and Mohammadi, \cite{OhMohammadi}. The reader is referred to \cite{preprintOM}.

\bibliography{biblio}

\end{document}